\DeclareMathAlphabet{\mathup}{OT1}{\familydefault}{m}{n}
\newtheorem{theorem}{Theorem}[section]
\newtheorem{corollary}[theorem]{Corollary}
\newtheorem{lemma}[theorem]{Lemma}
\newtheorem{proposition}[theorem]{Proposition}
\newtheorem{assumption}[theorem]{Assumption}
\theoremstyle{definition}
\newtheorem{definition}[theorem]{Definition}
\theoremstyle{remark}
\newtheorem{remark}[theorem]{Remark}
\numberwithin{equation}{section}
\newcommand{\dx}[1]{\mathop{}\!\mathup{d} #1}
\newcommand{\pderiv}[3][]{\frac{\mathop{}\!\mathup{d}^{#1} #2}{\mathop{}\!\mathup{d} #3^{#1}}}
\DeclarePairedDelimiter{\abs}{\lvert}{\rvert}
\DeclarePairedDelimiter{\bra}{(}{)}
\DeclarePairedDelimiter{\pra}{[}{]}
\DeclarePairedDelimiter{\set}{\{}{\}}
\newcommand{\eps}{\varepsilon}
\newcommand{\N}{\mathds{N}}
\newcommand{\R}{\mathds{R}}
\newcommand{\cA}{\mathcal{A}}
\newcommand{\cD}{\mathcal{D}}
\newcommand{\cF}{\mathcal{F}}
\newcommand{\cH}{\mathcal{H}}
\newcommand{\cJ}{\mathcal{J}}
\newcommand{\cK}{\mathcal{K}}
\newcommand{\cM}{\mathcal{M}}
\newcommand{\cN}{\mathcal{N}}
\newcommand{\cS}{\mathcal{S}}
\newcommand{\cT}{\mathcal{T}}
\newcommand{\cCE}{\mathcal{CE}}
\DeclareMathOperator{\CE}{CE}
\DeclareMathOperator{\EED}{EED}
\DeclareMathOperator{\Ent}{Ent}
\DeclareMathOperator{\I}{I}
\DeclareMathOperator{\II}{II}
\DeclareMathOperator{\LSI}{LSI}
\DeclareMathOperator{\LSW}{LSW}
\DeclareMathOperator{\mac}{mac}
\DeclareMathOperator{\mic}{mic}
\DeclareMathOperator{\Span}{span}
\DeclareRobustCommand{\SkipTocEntry}[5]{}
\newcommand{\titlestr}{Macroscopic limit of the Becker--Döring equation via gradient flows}
\definecolor{darkblue}{rgb}{0,0,0.6}
\title{\titlestr}
\author{André Schlichting}
\address{Institut für Angewandte Mathematik.
Universität Bonn.}
\email{Schlichting@iam.uni-bonn.de}
\thanks{\today}
\begin{document}

\begin{abstract}
\noindent
This work considers gradient structures for the Becker--Döring equation and its macroscopic limits.
The result of Niethammer~\cite{Niethammer2003} is extended to prove the convergence not only for solutions of the Becker--Döring equation towards the Lifshitz--Slyozov--Wagner equation of coarsening, but also the convergence of the associated gradient structures.
We establish the gradient structure of the nonlocal coarsening equation rigorously and show continuous dependence on the initial data within this framework.
Further, on the considered time scale the small cluster distribution of the Becker--Döring equation follows a quasistationary distribution dictated by the monomer concentration.
\end{abstract}

\subjclass[2010]{
Primary: 49J40; 
secondary: 34A34, 
35L65, 
49J45, 
49K15, 
60J27, 
82C26. 
}

\keywords{gradient flows; energy-dissipation principle; evolutionary Gamma convergence; quasistationary states; well-prepared initial conditions}

\maketitle


\section{Introduction}

\subsection{The Becker--Döring model}\label{s:Intro:BD}
In this work, we are interested in gradient structures for the Becker--Döring equation and its macroscopic limits.
The Becker--Döring equation~\cite{Becker1935} is a model for the coagulation and fragmentation of clusters consisting of identical monomers.
The main modeling assumption is only monomers are able to coagulate and fragment with other clusters in a way that the total density of monomers is conserved
\begin{equation}\label{e:BD:massconserve}
 \sum_{l=1}^\infty l n_l(t) = \sum_{l=1}^\infty l n_l(0) =: \varrho_0  \qquad\text{for all } t>0  .
\end{equation}
Hereby, $n_l(t)$ is the density of clusters of size $l$ at time $t$.
The evolution of the densities $n_l(t)$ is given by an countable number of ordinary differential equations of the form
\begin{align}\label{e:BD1}
 \dot n_l(t) = J_{l-1}(t) - J_l(t) \qquad l = 2, 3 \dots
\end{align}
where $J_l$ is the flux from clusters of size $l$ to clusters of size $l+1$.
The system~\eqref{e:BD1} gets closed  with an equation for $n_1$
\begin{equation}\label{e:BD2}
 \dot n_1(t) = -\sum_{l=1}^\infty J_l(t) - J_1(t) =: J_0(t) - J_1(t) ,
\end{equation}
which is chosen, such that formally~\eqref{e:BD:massconserve} is satisfied.
The fluxes $J_l$ are given by mass-action kinetics, that is the rate of coagulation is determined by $a_l n_1 n_l$ and the rate of fragmentation is given by $b_{l+1} n_{l+1}$, where $a_l$ and $b_{l+1}$ are rate factors only depending on $l$.
This leads to the constitutive relation
\begin{equation}\label{e:BD:flux}
 J_l(t) = a_l n_1(t) n_l(t) - b_{l+1} n_{l+1}(t) , \qquad l=1,2,\dots .
\end{equation}
The detailed balance condition for this system reads $J_l(t) = 0$ for all $l$, satisfied by a one-parameter family of equilibrium solutions
\begin{equation}\label{e:BD:equilibrium}
 \omega_l(z) := z^l Q_l, \qquad\text{with}\qquad Q_1 := 1  \qquad\text{and}\qquad  Q_l := \prod_{j=1}^{l-1} \frac{a_j}{b_{j+1}} .
\end{equation}
To specify the long-time behavior, we introduce the convergence radius of the series $z \mapsto \sum_l l z^l Q_l$ by $z_s \in [0,\infty]$ as well as its value at the convergence radius
\begin{equation}\label{e:BD:critical_mass}
\varrho_s := \sum_{l=1}^\infty l z_s^l Q_l \in [0, \infty].
\end{equation}
We are interested in the regime where $z_s \in (0,\infty)$ and $\varrho_s \in (0,\infty)$.
We will assume that the rates are explicitly given as follows:
\begin{assumption}[Rates]\label{ass:BD:rates}
For $\alpha \in [0,1)$, $\gamma \in (0,1)$ and $z_s, q >0$ define the coagulation and fragmentation rate of a monomer for a cluster of size $l$ by
\begin{equation*}
 a_l := l^\alpha \qquad\text{and}\qquad b_l := l^\alpha \bra*{z_s + q l^{-\gamma}} .
\end{equation*}
\end{assumption}
Hereby, the parameter $z_s$ is consistent with its definition as radius of convergence (cf.\ Lemma~\ref{lem:BD:assymptotic_Q}) and $\varrho_s$ as defined in~\eqref{e:BD:critical_mass} is strictly positive and finite under Assumption~\ref{ass:BD:rates}.

Then, as investigated by~\cite{Ball1986} solutions to the Becker--Döring equation with $\varrho_0 \leq \varrho_s$ converge to the equilibrium state $\omega_l(z)$, where $z=z(\varrho_0)$ is given such that $\sum_{l=1}^\infty l z^l Q_l = \varrho_0$ and the convergence takes place in a weighted $\ell^1$ space
\begin{equation*}
  \lim_{t\to \infty} \sum_{l=1}^\infty l \; \abs*{ n_l(t) - \omega_l(z)} = 0
\end{equation*}
In the case $\varrho_0 > \varrho_s$, it holds
\begin{equation*}
  \lim_{t\to\infty} n_l(t) = \omega_l(z_s) \qquad\text{for each $l\geq 1$}.
\end{equation*}
Hence, the excess mass $\varrho_0-\varrho_s>0$ vanishes in the limit $t \to \infty$.
The interpretation is, that the excess mass is contained in larger and larger clusters as times evolve.
These large clusters form a new phase, e.g.\ liquid droplets formed out of supersaturated vapor.
It is the aim of the is work to add some aspect to the understanding of the formation of the new phase.

The crucial ingredient for the above convergence statements is the existence of a Lyapunov functional $\cF$ of the form of a relative entropy $\cF_z(n) := \cH(n \mid \omega(z))$. Hereby, $z>0$ is a parameter selecting the stationary state and the relative entropy is defined by
\begin{align}\label{e:def:Lyapunov}
  \cH(n \mid \omega) := \sum_{l=1}^\infty \omega_l \psi\bra*{\frac{n_l}{\omega_l}} \quad\text{with}\quad \psi(a) := a\log a - a +1 , \text{ for } a > 0 .
\end{align}
A calculation shows that it is formally decreasing along solutions to the Becker--Döring equation
\begin{equation}\label{e:BD:Dissipation}
  \pderiv{\cF_z(n(t))}{t} = - \sum_{l=1}^\infty \bra*{ a_l n_1 n_l - b_{l+1} n_{l+1}} \bra*{\log a_l n_1 n_l - \log b_{l+1} n_{l+1}} =: - \cD(n(t)) \leq 0 .
\end{equation}
Hence, the Lyapunov function can be interpreted as a free energy dissipating along the flow.
This indicates, that the free energy is minimized as $t\to \infty$.
By the mass conservation~\eqref{e:BD:massconserve}, we expect the long-time limit to be the solution to the following minimization problem
\begin{equation}\label{e:BD:FEmin}
  \inf\set[\bigg]{ \cF(n) : \sum_{l=1}^\infty l n_l = \varrho_0 } =
  \begin{cases}
    \cF_z(\omega(z)) , &\varrho_0 \leq \varrho_s ; \\
    \cF_{z_s}(\omega(z_s)) , &\varrho_0 > \varrho_s .
  \end{cases}
\end{equation}
In the first case the infimum is attained and the parameter $z = z(\varrho_0)$ is chosen such that $\sum_{l=1}^\infty l \omega_l(z) = \varrho_0$.
In the second case the infimum is not attained (cf.\ \cite[Theorem 4.4]{Ball1986}). From now on, we choose $z = z(\varrho_0)$ in this particular form and omit the supscript.
Hence, the functional reflects correctly the long-time behavior of the equation.
Moreover, the Lyapunov function has the form of a relative entropy and the question arises, whether their exists a gradient structure for the Becker--Döring equation having this relative entropy as driving free energy.

\subsection{Gradient flow structure}\label{s:Intro:BD:GF}
To bring the system into the framework of gradient-flows, it is helpful to interpret the Becker--Döring equation as the following system of chemical reactions
\begin{equation}\label{e:BD:ChemReact}
  X_1 + X_{l-1} \stackrel[b_{l}]{a_{l-1}}{\rightleftharpoons} X_{l}, \qquad  l = 2 , 3, \dots\ .
\end{equation}
Hereby, $X_l$ denotes a cluster of size $l$ and the rates for coagulation $\set*{a_l}_{l\geq 1}$ and fragmentation $\set*{b_l}_{l\geq 2}$ are positive as in Assumption~\ref{ass:BD:rates}.
In this formulation, we can use the  gradient structure as observed by Mielke~\cite{Mielke2011a} for chemical reactions under detailed balance condition and it turns out that the Becker--Döring equation is indeed a gradient flow with respect to the Lyapunov function~\eqref{e:def:Lyapunov} under a suitable metric. The same metric was discovered by Maas~\cite{Maas2011} in the setting of reversible Markov chains.

The existence of the metric depends crucially on the detailed balance condition satisfied by the equilibrium~\eqref{e:BD:equilibrium}
\begin{equation}\label{e:BD:DBC}
  a_l \omega_1 \omega_l = b_{l+1} \omega_{l+1} =: k^l,
\end{equation}
where $k^l$ is the stationary equilibrium flux and the implicit parameter $z$ is chosen according to $\varrho_0$ as described after~\eqref{e:BD:FEmin}. The equations~\eqref{e:BD1}, \eqref{e:BD2}, \eqref{e:BD:flux} can be compactly rewritten with the help of~\eqref{e:BD:DBC} as
\begin{equation}\label{e:BD:ChemMaster}
  \dot n = - \sum_{l=1}^\infty k^l \bra*{ \frac{n_1 n_l}{\omega_1 \omega_l} - \frac{n_{l+1}}{\omega_{l+1}}} \bra*{ e^1 + e^l - e^{l+1}} ,
\end{equation}
with $e^l_i=0$ for $i\ne l$ and $e^l_l=1$ for $l\in \N$.
Since the free energy $\cF$ is of the form of a relative entropy~\eqref{e:def:Lyapunov}, we can identity its variation as
\begin{equation*}
  D\cF(n) = \bra*{ \log \frac{n_1}{\omega_1}, \dots, \log \frac{n_i}{\omega_i}, \dots} .
\end{equation*}
Then, the gradient flow formulation of the Becker-Döring equation takes the form
\begin{equation}\label{e:BD:GF}
  \dot n = - \cK(n) \, D\cF(n) ,
\end{equation}
where the Onsager matrix $\cK$ is defined by
\begin{equation}\label{e:BD:Onsager}
  \cK(n) := \sum_{l=1}^\infty k^l \ \Lambda\bra*{ \frac{n_1 n_l}{\omega_1 \omega_l} , \frac{n_{l+1}}{\omega_{l+1}}} \ \bra*{ e^1 + e^l- e^{l+1}} \otimes \bra*{ e^1 + e^l- e^{l+1}}
\end{equation}
and $\Lambda(\cdot,\cdot)$ is the logarithmic mean given for $a,b>0$ by
\begin{equation}\label{e:def:LogMean}
 \Lambda(a,b) = \int_0^1 a^s b^{1-s} \dx{s} =
\begin{cases}
 \frac{a-b}{\log a - \log b } &, a\ne b  \\
 a &, a=b .
\end{cases}
\end{equation}
The identification of~\eqref{e:BD:ChemMaster} and~\eqref{e:BD:GF} is based on the algebraic identity
\[
  \Lambda\bra*{a b, c} \ \bra[\big]{ \log a + \log b - \log c} = a b - c \qquad\text{ for }\qquad a,b,c>0 .
\]
We refer to Appendix~\ref{s:GScfModels} for the more general structure behind this identities and applications to other coagulation and fragmentation models.

\subsection{Variational characterization}\label{s:Intro:BD:Variational}
The gradient flow formulation allows for a variational characterization initiated by de Giorgi and its collaborators~\cite{dGMT1980} under the name of curves of maximal slope.
From the interpretation of the Becker--Döring model as chemical reaction, it is clear the the total number of particles is conserved, which suggests to define the state manifold
\begin{equation*}
  \cM := \set*{ n \in \R_+^\N :  \sum_{l=1}^\infty l n_l = \varrho_0 } .
\end{equation*}
Possible variations of the state manifold consistent with the Becker-Döring dynamic are given by the linear space $\cT\cM = \operatorname{span}\set*{ e^1 + e^l - e^{l+1} : l \in \N}$. By the definition of the Onsager matrix~\eqref{e:BD:Onsager}, we have that the following space is well-defined
\begin{equation}\label{e:def:cotangent}
 \cT_n^*\cM := \set*{\phi\in \R^\N : \exists s\in \cT\cM \text{ such that }  s=
  -\cK(n) \phi} .
\end{equation}
A crucial ingredient to study the underlying metric structure is the continuity equation and curves of finite action.
\begin{definition}[Curves of finite action]\label{def:BD:CurvesFiniteAction}
 A pair $[0,T]\ni t\mapsto (n(t),\phi(t))\in \cM \times \cT^*_{n(t)}\cM$ is a solution to the continuity equation, denoted by $(n,\phi)\in \cCE_T$, if it satisfies
\begin{enumerate}[ (i) ]
 \item $n(\cdot): [0,T]\to \cM$ is absolute continuous.
 \item The pair $(n,\phi)$ satisfies the continuity equation
    for $t\in (0,T)$ in the weak form, that is for all $\psi\in
    C^1_c((0,T),\R)$ and all $l\in \N$ holds
    \begin{equation}\label{e:cCE}
        \int_0^T\Big( \dot\psi(t)\; {n}_{l}(t)- \psi(t)\;\big(\cK(n(t)) \phi(t)\big)_l \Big) \dx{t} = 0 .
    \end{equation}
\end{enumerate}
The \emph{action} $\cA$ of a pair $(n,\phi) \in \cM\times \cT^*_{n}\cM$ is defined by
\begin{equation}\label{e:BD:action}
 \cA(n,\phi) := \phi(t) \cdot \cK(n(t)\phi(t) = \sum_{r=1}^R k^r \hat n^\omega_r \abs*{ \nabla_r \phi(t)}^2 ,
\end{equation}
where $\nabla_r \phi := \phi_{r+1} - \phi_r - \phi_1$ and
\begin{equation}\label{e:def:hatn}
  \hat n_l^\omega := \Lambda\bra*{\frac{n_1 n_l}{\omega_1 \omega_l} , \frac{n_{l+1}}{\omega_{l+1}}} .
\end{equation}
A curve $(n,\phi)\in \cCE_T$ is called a \emph{curve of finite action}, if
\begin{equation*}
 \sup_{t\in [0,T]} \cF(n(t)) <\infty, \quad  \int_0^T \cA(n(t), \phi(t)) \dx{t} < \infty ,\quad\text{and }\quad \int_0^T \cD(n(t)) \dx{t} < \infty ,
\end{equation*}
where $\cD(n) = \cA(n,-D\cF(n))$ is given as in~\eqref{e:BD:Dissipation}.
\end{definition}
The nonlocal gradient $\nabla_r \phi$ in~\eqref{e:BD:action} can be avoided by interpreting the monomer concentration $n_1 = \varrho - \sum_{l=2}^\infty l n_l$ as a nonlocal boundary condition. Along this idea a Fokker-Planck equation with such type of boundary condition having similar features like the Becker--Döring model was recently introduced in~\cite{ConSch17}.

Curves of finite action give a variational formulation to solutions of the Becker--Döring equation.
In comparison to the direct gradient flow equation~\eqref{e:BD:GF}, this avoids regularity questions arising from the application of the chain rule.
The concept was introduced in~\cite{dGMT1980} and further investigated in~\cite{AGS05}: For any curve $(n,\phi)\in \cCE_T$ of finite action holds
\begin{equation}\label{e:BD:Jintro}
 \cJ(n) := \cF(n(T)) - \cF(n(0)) + \frac{1}{2} \int_0^T \cD(n(t)) \dx{t} + \frac{1}{2} \int_0^T \cA(n(t), \phi(t)) \dx{t} \geq 0 .
\end{equation}
Moreover, equality is attained if and only if $n$ is a solution of~\eqref{e:BD:ChemMaster}.

We provide the crucial observation of the proof, which follows formally by evaluating
\begin{align*}
  \pderiv{}{t} \cF(n(t)) &=  D\cF(n(t)) \cdot \dot n(t) \stackrel{\eqref{e:cCE}}{=} D\cF(n(t)) \cdot \cK(n(t)) \phi(t) \\
  &\geq - \frac{1}{2} D\cF(n(t)) \cdot \cK(n(t))D\cF(n(t)) - \frac{1}{2} \phi(t) \cdot \cK(n(t)) \phi(t) ,
\end{align*}
where we used that $\cK$ is positive semidefinite and the Cauchy--Schwarz inequality.
The equality case is read off from the equality case in Cauchy--Schwarz.
For a rigorous treatment in a similar situation, we refer to~\cite[Section 2.5]{EFLS16}.
We use this variational structure to pass to the limit after a suitable rescaling.

\subsection{The macroscopic limit}\label{s:Intro:MacLim}
The connection between the Becker--Döring equation with positive excess mass $\varrho_0 - \varrho_s = \bar\varrho >0$ and a macroscopic theory of coarsening is due to Penrose~\cite{Penrose1997}.
He observed by formal asymptotics that the macroscopic part of the Becker-Döring dynamics converges after a suitable rescaling (cf.\ Section~\ref{s:scaling}) to a classical coarsening model introduced by Lifshitz and Slyozov~\cite{Lifshitz1961}, and Wagner~\cite{Wagner1961}
\begin{equation}\begin{split}\label{e:LSW}
  \partial_t \nu_t + \partial_{\lambda}\big( \lambda^\alpha( u(t) - q \lambda^{-\gamma})\nu_t\big) = 0 \quad \text{with}\quad  u(\nu_t) = \frac{q \int \lambda^{\alpha-\gamma} \dx\nu_t}{\int \lambda^\alpha \dx\nu_t} .
\end{split}\end{equation}
Hereby, the measure $\nu_t(\dx{\lambda})$ is the distribution of particles of macroscopic size $\lambda\in \R_+$.
Moreover, the parameters $\alpha$, $\gamma$ and $q$ satisfy Assumption~\ref{ass:BD:rates} and we will call the nonlocal conservation law \eqref{e:LSW} the LSW equation in the following. Formally, the total mass is conserved and the evolution stays in the state manifold for any $t>0$
\begin{equation*}
 \nu_t \in M = \set*{\nu \in C_c^0(\R_+)^* \;\middle|\; \int \lambda \;\nu(\dx\lambda) = \bar\varrho } .
\end{equation*}
The LSW equation are a gradient flow as formally observed by Niethammer \cite[Section 4]{Niethammer2004}.
The driving energy of the system is given exactly by the first oder expansion of the macroscopic part of a suitable rescaling of the free energy \eqref{e:def:Lyapunov} (with $z=z_s$) driving the Becker--Döring equation (cf.\ Lemma~\ref{lem:BD:expansionF})
\begin{equation}\label{e:LSW:energy}
  E(\nu) := \frac{q}{1-\gamma} \int \lambda^{1-\gamma} \; \nu(\dx\lambda) .
\end{equation}
Let us introduce a formal Riemannian structure and define a tangent space on $M$ by $T_\nu M := \set*{s : \R_+ \to \R \;\middle|\; \int \lambda s \dx{\lambda} = 0 }$.
An identification of tangent and cotangent vectors is obtained via the operator $K(\nu): T_{\nu}^* M \to T_\nu M$ given by
\begin{align}\label{e:LSW:Onsager}
  K(\nu) w := - \partial_{\lambda}\bra*{\lambda^{\alpha} w \nu} \quad \text{for} \quad w\in T_{\nu}^* M &:=  \set*{w: \R_+ \to \R \;\middle|\;  \exists s \in T_\nu M : K(\nu) v= s } .
\end{align}
By an integration by parts of the identity $0 = \int \lambda s \, \nu(\dx\lambda)$ holds the inclusion property $T_{\nu}^* M \subseteq \set*{w: \R_+ \to \R \;\middle|\; \int \lambda^{\alpha} w \; \nu(\dx{\lambda}) = 0}$.
Let us formally derive the gradient structure for the LSW equation (cf.\ \cite[Section 4]{Niethammer2004}), that is we assume all differentials and quantities to be smooth enough.
The differential of the energy~\eqref{e:LSW:energy} is given for some $s\in T_{\nu}M$ by using the identification $s=-K(\nu) w$ with $w\in T^*_\nu M$
\begin{align*}
 DE(\nu)\cdot s &= \frac{q}{1-\gamma} \int \lambda^{1-\gamma} s \dx{\lambda} =- \int \bra*{u\lambda - \frac{q}{1-\gamma} \lambda^{1-\gamma}} s \, \dx{\lambda}\\
 &= - \int \lambda^{\alpha} \bra{ u  - q \lambda^{-\gamma}} w \, \nu(\dx{\lambda}) ,
\end{align*}
where $u \in \R$ can be chosen such that $u- DE(\nu) \in T_{\nu}^* M$ thanks to $\int \lambda^\alpha w \nu(\dx\lambda)=0$.
Then, the gradient flow in weak form satisfies for all $\tilde s \in T_\nu M$
\begin{equation*}
 \int \lambda^\alpha w \tilde w \; \nu(\dx{\lambda}) =g_{\alpha,\nu}(\partial_t \nu,\tilde s) = - DE(\nu) \cdot \tilde s =  \int \lambda^{\alpha} \bra{ u  - q \lambda^{-\gamma}}\tilde w\; \nu(\dx{\lambda}) ,
\end{equation*}
where $\partial_t \nu = -\partial_\lambda(\lambda^{\alpha} w \nu)$ and $\tilde s = - \partial_{\lambda}\bra*{\lambda^\alpha \tilde w \nu}$ in distribution.
Hence, we obtain the identification
\begin{equation*}
 w   =  u  - q \lambda^{-\gamma},
\end{equation*}
where $u=u(\nu)$ is a Lagrangian multiplier chosen such that $w\in T_\nu^*M$, that is it satisfies the constraint $\int \lambda^\alpha w \, \nu(\dx\lambda) = 0$ and is formally given by \eqref{e:LSW}.
Hence, the gradient flow of the energy $E$ with respect to the metric induced by $K$ is given by
\begin{equation*}
 \partial_t \nu_t = - K(\nu) D E(\nu) = - \partial_\lambda\bra*{\lambda^\alpha\bra*{u(\nu_t) - q \lambda^{-\gamma}}\nu_t} ,
\end{equation*}
where $u(\nu_t)$ given by~\eqref{e:LSW}.
To make the above observation rigorous, we use the de Giorgi formalism of curves of maximal slope.
Up to technical details, which is dealt with in Section~\ref{S:LSW}, we can define an action functional as follows: For a pair $(\nu,w)$ solving the continuity equation $\partial_t \nu_t + \partial_{\lambda}\bra*{ \lambda^\alpha w_t \nu_t }  = 0$ in distributions, denoted by $(\nu,w)\in \CE_T$, the action is defined by
\begin{equation*}
   A(\nu_t,w_t) := \int \lambda^\alpha \abs*{w_t}^2 \dx\nu_t .
\end{equation*}
Then, by the identification of tangent and co-tangent vectors via $s= - \partial_\lambda(\lambda^\alpha w \nu)$, we obtain that the dissipation is given by
\begin{equation}\label{e:LSW:dissipation}
  D(\nu_t)  = A\bra*{\nu_t, u(t)  -DE(\nu_t)} = \int \lambda^\alpha \abs*{ u(\nu_t) - q \lambda^{-\gamma}}^2 \dx\nu_t ,
\end{equation}
where $u(\nu_t)\in L^2((0,T))$ given by~\eqref{e:LSW} ensures that $u(\nu_t) - q \lambda^{-\gamma}\in T_\nu^* M$, i.e.~it is a valid cotangent vector satisfying $\int \lambda^\alpha \bra*{u(\nu_t) - q\lambda^{-\gamma}} \dx{\lambda} =0$.

The functional $J(\nu)$, which completely characterizes solutions to~\eqref{e:LSW} (cf.\ Proposition~\ref{prop:LSW:DeGiorgi}) is defined by
\begin{equation}\label{e:LSW:DeGiorgi}
  J(\nu) := E(\nu_T) - E(\nu_0) + \frac{1}{2}\int D(\nu_t) \dx{t} + \frac{1}{2} \int A(\nu_t, w_t) \dx{t} \geq 0 ,
\end{equation}
with $J(\nu) = 0 $ if and only if $\nu_t$ is a weak solution to the LSW equation~\eqref{e:LSW}.

The main application of this variational framework is to prove the convergence of the Becker-Döring gradient structure to the LSW gradient structure.
In addition, the variational characterization of the LSW equation together with a compactness statement for curves of finite action (cf.\ Proposition~\ref{prop:LSW:compactness}) allows to proof continuous dependence on the initial data (cf.\ Corollary~\ref{cor:LSW:continitial}).

\subsection{Passage to the limit}
The macroscopic limit is rigorously derived by Niethammer~\cite{Niethammer2003}.
There, the main technical tool was to pass to the limit in the energy-dissipation relation associated with the rescaled Becker--Döring equation to obtain the energy-dissipation relation of the LSW equation. The one for solutions to the Becker--Döring equation is obtained by integrating the identity~\eqref{e:BD:Dissipation} in time
\begin{equation}\label{e:BD:EED}
  \cF(n(T)) -  \cF(n(0)) + \int_0^T \cD(n(t)) \dx{t} = 0  .
\end{equation}
The functional $\cJ$ from~\eqref{e:BD:Jintro} contains the identity~\eqref{e:BD:EED}, since for solutions of the Becker--Döring equation it holds $\cA\bra*{n(t),-D\cF(n(t))} = \cD(n(t))$.

Likewise, from~\eqref{e:LSW:DeGiorgi} and~\eqref{e:LSW:dissipation} follows that the LSW equation satisfy the energy-dissipation identity
\begin{equation*}
  E(\nu_T) - E(\nu_0) + \int_0^T D(\nu_t) \dx{t} = 0 .
\end{equation*}
where $u(t)$ is given in~\eqref{e:LSW}.

The contribution of this work is to lift the convergence statement from the level of energy-dissipation relations along solutions to the functionals $\cJ$ and $J$ along curves of finite action. Hereby, by doing so no essential new technical difficulties arrise, which underlines the fact that the gradient structure is natural for these types of equations.
We prove that a suitable rescaling of the functional $\cJ^\eps$ converges to the functional $J$ in an evolutionary $\Gamma$-convergence sense under the assumption of well-prepared initial data (see Theorem~\ref{thm:Scale:conv:CfA}).
In particular, the gradient structure of the Becker--Döring equation converges to the one of the LSW equation (cf.\ Theorem~\ref{thm:Scale:conv:CfA}) and in particular it implies the convergence of solutions (cf.\ Corollary~\ref{cor:ConvDeGiorgi}).
This program follows the ideas of Sandier and Serfaty \cite{Sandier2004}, and was later generalized by Serfaty \cite{Serfaty2011}.

The ingredients of the proof of convergence are based on: \emph{(i)} the variational characterization of the Becker-Döring equations in Section~\ref{s:Intro:BD:Variational}, which follows the gradient structure established by~\cite{Mielke2011a}; \emph{(ii)} the rigorous variational characterization of solutions to the LSW equation in Section~\ref{S:LSW}, which extends the formal gradient structure of~\cite[Section 4]{Niethammer2004}; \emph{(iii)} a priori estimates for the variational framework of the Becker-Döring gradient structure in Section~\ref{s:AprioriBD}, which lifts many of the results of~\cite{Niethammer2005b} from solutions of the Becker-Döring system to curves of finite action.

Another motivation to reconsider the proof of~\cite{Niethammer2003} is that systems possessing a gradient structure can be well described by studying convexity properties of the free energy with respect to the implied metric. Especially, the results of~\cite{Penrose1989} suggest, that the system shows dynamic metastability as described by~\cite{Otto2007} for gradient systems. Under this point of view also the additional results on quasistationarity in the next subsection are first steps towards a characterization of dynamic metastability of the Becker-Döring equations. 

\subsection{Well preparedness of initial data and quasistationarity}
A crucial assumption in the approach of showing convergence via curves of maximal slope is the well preparedness of initial data, which assumes that the rescaled free energy of the Becker--Döring gradient structure converges to the one of the LSW gradient structure
\begin{equation*}
  \cF^\eps(n^\eps(0)) \to E(\nu_0) \qquad\text{as}\qquad \eps \to 0 .
\end{equation*}
The second contribution of this work is to show that on the rescaled time-scale, the Becker--Döring equation reach instantaneously a quasistationary equilibrium, which is dictated only by the monomer concentration.
On the other hand, the monomer concentration follows closely a macroscopic quantity similarly defined as $u$ in \eqref{e:LSW}.
The crucial ingredient in the proof is an energy-dissipation estimate based on a logarithmic Sobolev inequality similarly to the one used in~\cite{Canizo2015} to proof convergence to equilibrium in the noncondensing case $\varrho_0 \leq \varrho_s$.

The quasistationary result shows, that the microscopic part of the rescaled free energy $\cF^\eps(n^\eps(t))$ vanishes for almost every $t\geq 0$. It does so by proving a separation of time scales. The fast scale is the relaxation time of small clusters towards a local equilibrium, which can be understood as the response to the slower coarsening time of the large clusters. 
On the level of conergence of gradient flows, this is a step towards showing, that only the macroscopic part of the rescaled free energy $\cF^\eps(n^\eps(0))$ has to convergence towards $E(\nu_0)$ to ensure well prepared initial date.
The conjecture is, that the microscopic part is automatically well prepared on the observed rescaled time-scale.
This is consistent with the continuous dependence on the initial data of the LSW equation, which is valid under the assumption of convergence of the macroscopic energy for the initial data (see Corollary~\ref{cor:LSW:continitial}).

\addtocontents{toc}{\SkipTocEntry}
\subsection*{Outline}

The next Section~\ref{S:main} contains in Section \ref{s:scaling} the rescaling of the Becker-Döring gradient flow structure. This enables us to state the main results in Section~\ref{S:main:main}.
In Section \ref{S:LSW}, we prove the gradient flow structure of the LSW equation and prove the continuous dependence on the initial data within this framework.
Section \ref{S:Limit} contains some a priori estimate for the Becker--Döring system in Section~\ref{s:AprioriBD}, which allow then to the limit in the gradient structure in Section~\ref{S:Limit:sub} and finally we prove the quasistationary equilibrium of the small clusters in Section~\ref{S:quasi}.
We conclude the paper with an Appendix~\ref{s:GScfModels} showing that also more general discrete coagulation and fragmentation models fall into this framework.
Moreover, another Appendix~\ref{s:assymptotic_Q} provides an elementary estimate.

\section{Main results}\label{S:main}

\subsection{Heuristics and scaling}\label{s:scaling}

From now, we consider the Becker--Döring system with initial total mass $\varrho_0>\varrho_s$ and rates satisfying~Assumption~\ref{ass:BD:rates}.
Moreover, the reference state for the free energy is given by $\omega = \omega(z_s)$ as defined in~\eqref{e:BD:equilibrium}.

We fix a scale $\eps^{-1}$ of the large cluster for some $\eps >0$ and consider the first order expansion of the energy in $\eps$. For some cut-off $l_0$, we introduce for $l\geq l_0$ the rescaled variable $\lambda=\eps l$ and treat $\lambda$ as continuous variable on $\R_+$.

We rescale the cluster density $n_l$ by $\eps^2$ and define the empirical measure by
\begin{equation}\label{e:def:BD:mac}
 \nu^\eps(\dx\lambda) := \bra*{\Pi_{\mac}^\eps n}(\dx{\lambda}) := \eps \sum_{l\geq l_0} \delta_{\eps l}(\dx\lambda) \frac{n_l}{\eps^2} = \frac{1}{\eps} \sum_{l\geq l_0} \delta_{\eps l}(\dx\lambda) n_l \ .
\end{equation}
That is for each $\zeta\in C_c^0(\R)$ holds
\begin{equation*}
  \int_0^\infty \zeta(\lambda)\, \nu^\eps(\dx{\lambda}) = \frac{1}{\eps} \sum_{l\geq l_0} \zeta(\eps l) n_l .
\end{equation*}
This scaling preserves the mass in the large cluster, which follows by approximating $\zeta(\lambda)=\lambda$ with cut-off functions.

The leading order contribution of the free energy is given by the free energy of the large clusters $l\geq l_0$.
This part of the free energy~\eqref{e:def:Lyapunov} can be expanded (cf.\ Lemma~\ref{lem:BD:expansionF}) as follows
\begin{align}
 \cF(n)&\geq \sum_{l \geq l_0} \omega_l \psi\bra*{\frac{n_l}{\omega_l}}
  = \bra*{\frac{q}{z_s(1-\gamma)} \sum_{l\geq l_0} l^{1-\gamma} n_l}\bra*{1+O(l_0^{-\sigma})+O(l_0^\gamma w_{l_0})} \nonumber \\
  &=  \frac{\eps^\gamma q}{z_s(1-\gamma)} \int \lambda^{1-\gamma} \dx\nu^\eps \bra*{1+O(l_0^{-\sigma})+O(l_0^\gamma w_{l_0})}, \label{e:Scaling:1stOrderEnergy}
\end{align}
for some $\sigma >0$.
To match the macroscopic energy~\eqref{e:LSW:energy}, we define the rescaled free energy as
\begin{equation*}
 \cF^\eps(n) =  \frac{z_s}{\eps^{\gamma}} \cF(n) .
\end{equation*}
The main result of~\cite{Ball1986} states that the total free energy decreases to zero as $t\to \infty$. Hence, one possible way to obtain initial data $n^\eps(0)$ with $\cF^\eps(n^\eps(0)) = O(1)$ is to introduce a time $t_\eps$ such that $\cF(n(t_\eps)) = O(\eps^\gamma)$ and set $n^\eps(0)= n(t_\eps)$. In particular, this implies by the results of \cite{Penrose1989}, that for $\eps$ small enough, all possible existing metastable states are already broken down.

The expansion \eqref{e:Scaling:1stOrderEnergy} also shows, that the cut-off $l_0$ has to satisfy two conditions (cf.\ \eqref{e:BD:expansionF} and~\eqref{e:Apriori3})
\begin{equation*}
  \lim_{\eps\to 0} l_0^\gamma \omega_{l_0} = 0 \qquad\text{and}\qquad  \lim_{\eps\to 0} \max\set*{ l_0^\gamma , l_0^\alpha} \sqrt{\cF(n^\eps(0))} = 0  .
\end{equation*}
By taking into account the asymptotic of $\set*{Q_l}_{l\geq 1}$ (cf.\ Lemma~\ref{lem:BD:assymptotic_Q}) and recalling $\omega_l = z_s^l Q_l$, the cut-off $l_0$ can be chosen as
\begin{equation}\label{ass:cutoff}
 l_0 := \lfloor \eps^{-x} \rfloor \qquad\text{ for some }\qquad x\in \bra*{0,\tfrac{1}{2}} .
\end{equation}
We consider only states $n$ such that free energy is of order $\eps^\gamma$, that is we consider the restricted state space
\begin{equation*}
 \cM^{\eps} := \set[\bigg]{ n\in \R_+^\N : \sum_{l\geq 1} l n_l = \varrho_0 \ \text{ and }\  \cF(n) \leq \eps^\gamma}.
\end{equation*}
Likewise, the differential of the free energy for states $n^\eps\in \cM^\eps$ will be of order $\eps^\gamma$ and hence covectors will be also on scale $z_s^{-1} \eps^\gamma$, that is we define a rescaled vector field $w^\eps$ by
\begin{equation}\label{def:Scale:vectorfield}
\nabla_l \phi = (e^{l+1} - e^l - e^1) \cdot \phi = \phi_{l+1}- \phi_l - \phi_1 =:z_s^{-1}  \eps^\gamma w^\eps(\eps l) .
\end{equation}
The rescaling of tangent vectors is then determined by the rescaling necessary for obtaining the macroscopic Onsager operator~\eqref{e:LSW:Onsager}.
This follows heuristically by expanding the Onsager matrix~\eqref{e:BD:Onsager}
\begin{equation*}
  (\cK(n) \phi)_{l} = - \eps^{1-\alpha} \partial_{\lambda}^\eps\bra*{ \lambda^\alpha \eps^\gamma w^\eps \, \nu^\eps} \bra*{1+o(1)} \quad\text{with}\quad \partial_\lambda^\eps f(\lambda) := \frac{f(\lambda+\eps)-f(\lambda)}{\eps} .
\end{equation*}
Hence, we define the rescaled Onsager operator by
\begin{equation*}
 (\cK^\eps(n) w^\eps)(\eps l) := \frac{1}{\eps^{1-\alpha+\gamma}} (\cK(n) \phi)(l) ,
\end{equation*}
where $w^\eps$ and $\phi$ are given by the relation~\eqref{def:Scale:vectorfield}.
This rescaling translates to the action $\cA(n,\phi)$~\eqref{e:BD:action} and we define the
rescaled action by
\begin{equation}\label{def:Scale:action}
 \cA^\eps(n,w^\eps) := \frac{z_s}{\eps^{1-\alpha + 2 \gamma}} \sum_{l\geq 1} k^l \hat n^{\omega}_l \abs*{\nabla_l \phi}^2  .
\end{equation}
Since, the dissipation is given as $\cD(n) := \cA(n,-D\cF(n))$, the rescaling is the same and we define $\cD^\eps(n) = z_s \eps^{-\bra{1-\alpha + 2\gamma}} \cD(n)$.
Hence the total rescaling between cotangent and tangent vectors is $\eps^{1-\alpha+\gamma}$, which fixes the time scale for the macroscopic process.

Now, we introduce rescaled curves of finite action in analog to Definition~\ref{def:BD:CurvesFiniteAction}.
By abuse of notation the new time-scale $t/\eps^{1-\alpha+\gamma}$ is still denoted by $t$.
\begin{definition}[Rescaled curves of finite action]
  A weak solution $[0,T]\ni t \mapsto (n^\eps(t), w^\eps(t))$ to the rescaled continuity equation
  \begin{equation*}
     \int_0^T \bra*{ \dot\psi(t) n^\eps_l(t) - \psi(t) \bra*{\cK^\eps(n^\eps(t)) w^\eps(t)}_l } \dx{t} = 0 ,\qquad\text{for all } \psi\in C_c^1((0,T);\R)
  \end{equation*}
  denoted by $(n^\eps,w^\eps)\in \cCE^\eps_T$ is called a rescaled curve of finite action if
  \begin{equation*}
    \sup_{t\in [0,T]} \cF^\eps(\nu^\eps_t) < \infty , \quad
    \int_0^T \cA^\eps(n^\eps(t), w^\eps(t)) \dx{t}  < \infty
    \quad \text{and} \quad
    \int_0^T \cD^\eps(n^\eps(t)) \dx{t} < \infty .
  \end{equation*}
  Moreover, for such a curve we define the rescaled functional characterizing curves of maximal slope by
  \begin{equation}\label{e:BD:DeGiorgi}
    \cJ^\eps(n^\eps) := \cF^\eps(n^\eps(T)) - \cF^\eps(n^\eps(0)) + \frac{1}{2} \int_0^T \!\! \cD^\eps(n^\eps(t)) \dx{t} + \frac{1}{2} \int_0^T \!\! \cA^\eps(n^\eps(t), w^\eps(t)) \dx{t} \geq 0 .
  \end{equation}
  In particular solutions such that $\cJ^\eps(n^\eps) = 0$ satisfy the time-rescaled Becker--Döring equation
  \begin{align}\label{e:BD:GF:rescaled}
    \dot n^\eps(t) = - \eps^{1-\alpha+\gamma} \cK(n^\eps(t)) D\cF(n^\eps(t))  .
  \end{align}
\end{definition}

\subsection{Convergence of the gradient structures}\label{S:main:main}

The functionals $\cJ^\eps$ \eqref{e:BD:DeGiorgi} and $J$~\eqref{e:LSW:DeGiorgi} are used to characterize solutions of the Becker--Döring and LSW equations in a variational way, respectively.
The main idea to show convergence of the Becker--Döring equation to the LSW equation, which goes back to~\cite{Sandier2004} (cf.\ \cite{Serfaty2011}), is to prove $\liminf_{\eps\to 0} \cJ^\eps(n^\eps) \geq J(\nu)$ for curves of finite action $n^\eps$ converging to $\nu$.
The lower semi-continuity estimate can be established by showing individual semi-continuity estimates for the energy, action and dissipation.
This is the content of Theorem~\ref{thm:Scale:conv:CfA}.
\begin{theorem}[Convergence of curves of finite action]\label{thm:Scale:conv:CfA}
  Suppose that $\alpha\geq 1-3\gamma$.
  For $T>0$ let $(n^\eps,w^\eps)\in \cCE^\eps_T$ be a rescaled curve of finite action and $\nu_0^\eps:= \Pi_{\mac}^\eps n^\eps(0)$ with $\Pi_{\mac}^\eps$ as defined in~\eqref{e:def:BD:mac} satisfy
  \begin{equation}\label{ass:tightness}
    \int_{R}^\infty \lambda \; \nu^\eps_0(\dx\lambda) \to 0 \qquad\text{as } R\to \infty \qquad \text{ uniformly in $\eps$.}
  \end{equation}
  Then, there exists a limiting curve $t \mapsto (\nu_t, w_t)\in \CE_T$ such that
  \begin{equation}\label{e:conv:nu}
    \nu_t^\eps:= \Pi_{\mac}^\eps n^\eps(t)  \stackrel{*}{\rightharpoonup} \nu_t \qquad\text{in } C_c^0(\R_+)^* \qquad \text{for all } t\in [0,T]
  \end{equation}
  and
  \begin{equation}\label{e:conv:mu}
    w^\eps_t(\lambda) \nu_t^\eps(\dx\lambda) \dx{t} \stackrel{*}{\rightharpoonup} w_t(\lambda) \nu_t(\dx\lambda) \dx{t}  \qquad\text{in}\quad C_c^0([0,T]\times \R_+)^* .
  \end{equation}
  There exists $u\in L^2((0,T))$ such that
  \begin{equation}\label{e:Scale:conv:monomers}
    h^\eps(t) := \frac{n_1(t) - z_s}{\eps^{\gamma}} \rightharpoonup u(t) , \qquad\text{ weakly in } L^2((0,T)) ,
  \end{equation}
  and $u(t)$ satisfies the identity
  \begin{equation*}
    u(t) = \frac{q \int \lambda^{\alpha- \gamma} \,\nu_t(\dx{\lambda})}{\int \lambda^\alpha \,\nu_t(\dx{\lambda})} .
  \end{equation*}
  Moreover, the energy, the action and the dissipation satisfy the following $\liminf$ estimates
  \begin{align}
    \forall t\in [0,T] : \qquad \lim_{\eps\to 0} \cF^\eps(\nu^\eps_t)  &\geq   E(\nu_t)  , \label{e:Scale:conv:Fliminf}\\
    \liminf_{\eps\to 0} \int_0^T \cA^\eps(\nu^\eps_t,w^\eps_t) \dx{t}  &\geq   \int_0^T A(\nu_t, w_t) \dx{t} , \label{e:Scale:conv:Aliminf} \\
     \liminf_{\eps\to 0} \int_0^T \cD^\eps(\nu^\eps_t) \dx{t} &\geq  \int_0^T D(\nu_t) \dx{t} .\label{e:Scale:conv:Dliminf}
  \end{align}
\end{theorem}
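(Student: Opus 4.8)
The plan is to follow the Sandier--Serfaty scheme, exploiting the three uniform bounds built into a rescaled curve of finite action: first establish compactness and extract the limit curve, then identify the monomer concentration $u$, and finally prove the three $\liminf$ estimates separately. For compactness, the energy bound $\sup_{t}\cF^\eps(\nu^\eps_t)\leq C$ together with the expansion~\eqref{e:Scaling:1stOrderEnergy} gives a uniform bound on $\int\lambda^{1-\gamma}\dx\nu^\eps_t$; combined with the conserved macroscopic mass $\int\lambda\dx\nu^\eps_t\leq\bar\varrho$ and the tightness hypothesis~\eqref{ass:tightness} this yields uniform tightness of $\{\nu^\eps_t\}$ at every time. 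For compactness in time I would test the continuity equation with a fixed $\zeta\in C^1_c(\R_+)$: after a discrete integration by parts and the Cauchy--Schwarz inequality the time derivative of $t\mapsto\int\zeta\dx\nu^\eps_t$ is controlled by $(\int\lambda^\alpha|\zeta'|^2\dx\nu^\eps_t)^{1/2}\cA^\eps(n^\eps(t),w^\eps(t))^{1/2}$, so the action bound renders these scalar curves uniformly $\tfrac12$-Hölder in time. An Arzelà--Ascoli argument over a countable dense family of test functions then produces a subsequence with~\eqref{e:conv:nu}, while the action bound gives weak-$*$ compactness of the space--time flux $w^\eps_t\nu^\eps_t\dx{t}$; its limit is absolutely continuous with respect to $\nu_t\dx{t}$ by lower semicontinuity of the action, which after disintegration yields~\eqref{e:conv:mu}.

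Next I would treat the monomer concentration, which is the genuinely nonlocal feature. The discrete gradient $\nabla_l\phi=\phi_{l+1}-\phi_l-\phi_1$ encodes, through the $-\phi_1$ term, the monomer concentration $n_1$ as a Lagrange-type multiplier enforcing mass conservation. Writing $n_1=z_s+\eps^\gamma h^\eps$ and expanding the dissipation around the equilibrium $\omega(z_s)$, the logarithm $\log\tfrac{a_l n_1 n_l}{b_{l+1}n_{l+1}}$ produces at leading order $\tfrac{\eps^\gamma}{z_s}(h^\eps-q\lambda^{-\gamma})$, the second term coming from the fragmentation correction $q l^{-\gamma}=q\eps^\gamma\lambda^{-\gamma}$ built into $b_l$. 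Using the a priori estimates of Section~\ref{s:AprioriBD} to control the remainder, the rescaled dissipation $\cD^\eps$ then dominates $\int\lambda^\alpha|h^\eps-q\lambda^{-\gamma}|^2\dx\nu^\eps_t$ up to lower order, giving the uniform $L^2((0,T))$ bound behind the weak convergence~\eqref{e:Scale:conv:monomers}. The explicit formula for $u$ follows by passing to the limit in the rescaled mass-conservation constraint $\sum_l l\dot n_l=0$, which becomes exactly the orthogonality condition $\int\lambda^\alpha(u-q\lambda^{-\gamma})\dx\nu_t=0$ and forces the stated quotient.

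Finally the three $\liminf$ estimates. For the energy~\eqref{e:Scale:conv:Fliminf} I would drop the nonnegative microscopic part of $\cF^\eps$ and invoke~\eqref{e:Scaling:1stOrderEnergy}, reducing the claim to weak-$*$ lower semicontinuity of $\nu\mapsto\tfrac{q}{1-\gamma}\int\lambda^{1-\gamma}\dx\nu$, which holds because $\lambda^{1-\gamma}$ is nonnegative and continuous and may be approximated from below by functions in $C_c$. Both the action~\eqref{e:Scale:conv:Aliminf} and the dissipation~\eqref{e:Scale:conv:Dliminf} reduce to joint lower semicontinuity of the Benamou--Brenier-type functional $(\nu,J)\mapsto\int|J|^2/(\lambda^\alpha\nu)$ under the convergence~\eqref{e:conv:nu}--\eqref{e:conv:mu}: using the definition~\eqref{e:def:hatn} of $\hat n^\omega_l$ and the a priori control of the logarithmic mean, the discrete action is first rewritten as $\int\lambda^\alpha|w^\eps|^2\dx\nu^\eps_t\,(1+o(1))$, and for the dissipation the same rewriting applies with the cotangent field replaced by its limit $u-q\lambda^{-\gamma}$ identified above.

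The hard part will be the simultaneous control of the discrete-to-continuum passage and the nonlocal structure: showing that the errors from replacing the logarithmic mean $\hat n^\omega_l$ and the discrete gradient $\nabla_l\phi$ by their continuum counterparts vanish uniformly, and that the $-\phi_1$ contribution collapses precisely onto the multiplier $u$. This is where the a priori estimates of Section~\ref{s:AprioriBD} and the hypothesis $\alpha\geq 1-3\gamma$ are needed, the latter guaranteeing that the cut-off error terms carrying powers of $l_0$ in~\eqref{e:Scaling:1stOrderEnergy} and in the action expansion stay subcritical as $\eps\to0$.
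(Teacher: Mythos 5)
Your overall scheme coincides with the paper's: compactness of $\nu^\eps_t$ via Arzelà--Ascoli in time using the action--moment estimate~\eqref{e:Scale:ActionMoment}, tightness and mass conservation, lower semicontinuity of the energy after dropping the microscopic part; weak$^*$ compactness of the flux density and a Riesz-representation (Benamou--Brenier) argument in $L^2(\lambda\dx{\nu_t}\dx{t})$ for the $\liminf$ of the action; the dissipation treated as the action of the special cotangent field and bounded below by $D(\nu_t)$ as an infimum; and passage to the limit in the discrete continuity equation. Those parts are sound and essentially identical to the paper's Steps 1, 2 and 4.

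The one step that would fail as written is the claim that, after expanding the logarithm of the flux ratio, \emph{the rescaled dissipation $\cD^\eps$ dominates $\int\lambda^\alpha\abs{h^\eps-q\lambda^{-\gamma}}^2\dx{\nu^\eps_t}$ up to lower order}. The rescaled cotangent field generating the dissipation is
\begin{equation*}
  \tilde w^\eps(\eps l)=z_s\eps^{-\gamma}\bra*{\log\tfrac{n_1}{z_s}+\log\tfrac{n_l/\omega_l}{n_{l+1}/\omega_{l+1}}},
\end{equation*}
and besides the contributions $h^\eps$ and $-q\lambda^{-\gamma}$ it contains the discrete logarithmic derivative $z_s\eps^{-\gamma}\log\bra{n_l/n_{l+1}}$. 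For a general curve of finite action this term has no sign and is not pointwise small, so at fixed $\eps$ the dissipation does \emph{not} control the weighted $L^2$ norm of $h^\eps-q\lambda^{-\gamma}$; the term only disappears in the weak limit, because by the identity $\Lambda(a,b)(\log a-\log b)=a-b$ its contribution to the dissipation flux density is the telescoping difference $z_s l^\alpha(n_l-n_{l+1})$, which gains a power of $\eps$ after summation by parts against a test function. Likewise the $L^2((0,T))$ bound on $h^\eps$ is obtained by testing the flux against moments of $\nu^\eps$ and using the lower moment bound $\int\lambda^\alpha\dx{\nu^\eps}\geq c>0$ (the mechanism of Proposition~\ref{prop:stability}), not by a pointwise domination. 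This weak-limit identification is exactly what the paper outsources to \cite[Lemma 2.6]{Niethammer2003}. Once you replace the quadratic-domination claim by this argument, your Step 3 closes as in the paper: the lower-semicontinuity argument of Step 2 applied to the dissipation flux density~\eqref{def:Scale:fluxdensityhat} gives $\liminf_{\eps\to0}\int_0^T\cD^\eps\dx{t}\geq\int_0^T A(\nu_t,\tilde w_t)\dx{t}$ with $\tilde w_t=u(t)-q\lambda^{-\gamma}$, and \eqref{e:Scale:conv:Dliminf} follows since $D(\nu_t)$ is the infimum over such cotangent fields.
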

The classical conclusion from the above theorem is the convergence of curves of maximal slope under the assumption of well-prepared initial data to deal with the term $-\cF^\eps(n^\eps(0))$ inside of $\cJ^\eps(n^\eps)$.
The following Corollary is an immediate consequence of Theorem~\ref{thm:Scale:conv:CfA} by the arguments of~\cite[Theorem 2]{Serfaty2011}.
\begin{corollary}[Convergence of curves of maximal slope]\label{cor:ConvDeGiorgi}
  Suppose $\alpha\geq 1-3\gamma$ and let $(n^\eps,w^\eps)\in \cCE^\eps_T$ be a curve of finite action.
  Moreover assume $\nu_0^\eps:= \Pi_{\mac}^\eps n^\eps(0)$ satisfy the tightness condition~\eqref{ass:tightness} and $n^\eps(0)$ is well-prepared in the sense that
  \begin{equation*}
    \lim_{\eps \to 0} \cF^\eps(n^\eps(0)) = E(\nu_0) .
  \end{equation*}
  Then, there exists a limiting $(\nu, w)\in \CE_T$ satisfying ~\eqref{e:conv:nu} and~\eqref{e:conv:mu} such that
  \begin{equation*}
    \liminf_{\eps\to 0} \cJ^\eps(n^\eps) \geq J(\nu) \geq 0.
  \end{equation*}
  Especially, if $\cJ^\eps(n^\eps)=0$ then $J(\nu) = 0$ and it holds
  \begin{align*}
    \lim_{\eps\to 0} \cF^\eps(n^\eps(t)) &=  E(\nu_t) &&\text{for all } t\in [0,T],\\
    \cA^\eps(n^\eps,w^\eps) &\to A(\nu, w)  &&\text{for a.e.\ } t\in [0,T], \\
    \cD^\eps(n^\eps) &\to  D(\nu) &&\text{for a.e.\ } t\in [0,T]  .
  \end{align*}
\end{corollary}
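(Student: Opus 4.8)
The plan is to obtain the statement as a direct assembly of the three $\liminf$ estimates furnished by Theorem~\ref{thm:Scale:conv:CfA}, exactly along the abstract scheme of~\cite[Theorem~2]{Serfaty2011}. First I invoke Theorem~\ref{thm:Scale:conv:CfA}, which already produces the limiting curve $(\nu,w)\in\CE_T$ with the convergences~\eqref{e:conv:nu} and~\eqref{e:conv:mu}, the identification of $u$ via~\eqref{e:Scale:conv:monomers}, and the semicontinuity estimates~\eqref{e:Scale:conv:Fliminf}, \eqref{e:Scale:conv:Aliminf} and~\eqref{e:Scale:conv:Dliminf}. The only additional ingredient is the well-preparedness hypothesis: since $\cF^\eps(n^\eps(0))\to E(\nu_0)$ is a genuine limit rather than a mere $\liminf$, the negative contribution $-\cF^\eps(n^\eps(0))$ inside $\cJ^\eps$ converges and can be separated from the remaining $\liminf$.

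Concretely, I decompose $\cJ^\eps(n^\eps)$ according to~\eqref{e:BD:DeGiorgi} and use superadditivity of $\liminf$, pulling out the converging initial term:
\begin{align*}
  \liminf_{\eps\to0}\cJ^\eps(n^\eps)
    &\geq \liminf_{\eps\to0}\cF^\eps(n^\eps(T)) - E(\nu_0) \\
    &\quad + \tfrac12\liminf_{\eps\to0}\int_0^T\cD^\eps(n^\eps(t))\dx{t}
      + \tfrac12\liminf_{\eps\to0}\int_0^T\cA^\eps(n^\eps(t),w^\eps(t))\dx{t}.
\end{align*}
Applying~\eqref{e:Scale:conv:Fliminf} at $t=T$ together with~\eqref{e:Scale:conv:Dliminf} and~\eqref{e:Scale:conv:Aliminf} bounds the right-hand side below by $E(\nu_T)-E(\nu_0)+\tfrac12\int_0^T D(\nu_t)\dx{t}+\tfrac12\int_0^T A(\nu_t,w_t)\dx{t}=J(\nu)$, while $J(\nu)\geq0$ is precisely the content of Proposition~\ref{prop:LSW:DeGiorgi}. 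This establishes $\liminf_{\eps\to0}\cJ^\eps(n^\eps)\geq J(\nu)\geq0$.

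For the equality case I assume $\cJ^\eps(n^\eps)=0$ along the sequence, so that $0\geq J(\nu)\geq0$ forces $J(\nu)=0$ and hence, by the equality case of Proposition~\ref{prop:LSW:DeGiorgi}, $\nu$ is a weak solution of the LSW equation~\eqref{e:LSW}. To upgrade the three $\liminf$ bounds to convergences, I rerun the estimate on each subinterval $[0,t]$: since $\cJ^\eps(n^\eps)=0$ characterizes genuine solutions of the rescaled Becker--Döring equation~\eqref{e:BD:GF:rescaled}, the energy--dissipation balance holds on $[0,t]$ as well, and likewise $J(\nu)=0$ gives $E(\nu_0)=E(\nu_t)+\tfrac12\int_0^t D+\tfrac12\int_0^t A$. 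Thus on $[0,t]$ the three nonnegative sequences $\cF^\eps(n^\eps(t))$, $\int_0^t\cD^\eps$ and $\int_0^t\cA^\eps$ have a sum converging to $E(\nu_0)$ while each has $\liminf$ no smaller than its target, and the three targets sum to $E(\nu_0)$; the elementary rigidity lemma then forces each sequence to converge to its target. This yields $\cF^\eps(n^\eps(t))\to E(\nu_t)$ for every $t\in[0,T]$, as well as $\int_0^t\cD^\eps\to\int_0^t D$ and $\int_0^t\cA^\eps\to\int_0^t A$ for every $t$. Finally, the convergence of the time-densities follows from the \emph{localized} semicontinuity $\liminf_{\eps\to0}\cA^\eps(n^\eps(t),w^\eps(t))\geq A(\nu_t,w_t)$ for a.e.\ $t$, combined with $\int_0^T\cA^\eps\to\int_0^T A$ and $t\mapsto A(\nu_t,w_t)\in L^1((0,T))$: then $\bra*{A(\nu_t,w_t)-\cA^\eps(n^\eps(t),w^\eps(t))}_+\to0$ a.e.\ and is dominated by $t\mapsto A(\nu_t,w_t)$, so dominated convergence upgrades this to $L^1$-convergence $\cA^\eps\to A$ and hence to convergence for a.e.\ $t$; the same argument delivers $\cD^\eps\to D$.

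The main obstacle I anticipate is this last upgrade from integrated to pointwise a.e.\ convergence of the action and dissipation. It hinges on having the semicontinuity estimates~\eqref{e:Scale:conv:Aliminf}--\eqref{e:Scale:conv:Dliminf} available in a form localized to every subinterval (or pointwise in time), which must be read off from the proof of Theorem~\ref{thm:Scale:conv:CfA} rather than from its integrated statement, and on the integrability of the limit densities guaranteed by $J(\nu)<\infty$. All remaining steps are a mechanical recombination of the estimates already provided by Theorem~\ref{thm:Scale:conv:CfA} and the de Giorgi characterization of Proposition~\ref{prop:LSW:DeGiorgi}.
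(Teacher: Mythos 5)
Your argument is correct and follows essentially the same route as the paper, which proves this corollary simply by invoking the scheme of \cite[Theorem 2]{Serfaty2011} on top of Theorem~\ref{thm:Scale:conv:CfA}: the $\liminf$ inequality via superadditivity plus well-preparedness, and the equality case via the standard rigidity of nonnegative sequences whose sum converges while each $\liminf$ dominates its target, the targets summing to the limit. Your closing caveat --- that the a.e.\ convergence of the action and dissipation densities requires the semicontinuity estimates in a form localized in time, to be read off from the proof of Theorem~\ref{thm:Scale:conv:CfA} rather than from its integrated statement --- is precisely the point the paper leaves implicit in that citation.
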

\subsection{Quasistationary evolution}
The statement~\eqref{e:Scale:conv:monomers} connects the microscopic monomer concentration with a ratio of moments of the macroscopic cluster distribution.
It is possible to show this identity already on the level of rescaled Becker--Döring equation alone.
That is, the monomer concentration follows closely a moment ratio of the distribution of the large clusters.
\begin{proposition}\label{prop:stability}
  For any curve $(n^\eps,\phi^\eps)\in \cCE_T^\eps$ such that $\cJ^\eps(n^\eps) < \infty$ uniformly in $\eps$ and $\nu^\eps_0$ satisfying~\eqref{ass:tightness} the rescaled monomer excess concentration $h^\eps$ as defined in~\eqref{e:Scale:conv:monomers} satisfies
  \begin{equation}\label{e:DissBound}
     \int_0^T \bra*{ h^\eps(t) - u^\eps(t) }^2 \; dt \leq C \int_0^T \cD^\eps_{\mac}(n(t)) \; dt ,
  \end{equation}
  where $\cD_{\mac}^\eps$ is defined like $\cD^\eps$ with summation restricted to $\set{l_0,\dots,\infty}$ and
  \begin{equation*}
     u^\eps(t) := \frac{\sum_{l\geq l_0} \bra*{b_{l+1} n_{l+1}(t) - a_l n_l}}{\eps^{\gamma} \sum_{l\geq l_0} a_l n_l} .
  \end{equation*}
\end{proposition}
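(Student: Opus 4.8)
The plan is to reduce~\eqref{e:DissBound} to a pointwise-in-time bound obtained from the Cauchy--Schwarz inequality applied to the net macroscopic flux. Writing $J_l := a_l n_1 n_l - b_{l+1} n_{l+1}$ for the flux~\eqref{e:BD:flux}, the first step is the algebraic identity
\begin{equation*}
  h^\eps(t) - u^\eps(t) = \frac{\sum_{l\geq l_0} J_l(t)}{\eps^\gamma \sum_{l\geq l_0} a_l n_l(t)} ,
\end{equation*}
which I would obtain by putting $h^\eps - u^\eps$ over the common denominator $\eps^\gamma\sum_{l\geq l_0} a_l n_l$ and using $n_1\sum_{l\geq l_0} a_l n_l - \sum_{l\geq l_0} b_{l+1} n_{l+1} = \sum_{l\geq l_0}(a_l n_1 n_l - b_{l+1}n_{l+1}) = \sum_{l\geq l_0} J_l$. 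Thus the rescaled monomer excess relative to $u^\eps$ is exactly the net flux leaving the macroscopic window, normalised by the $\alpha$-weighted mass; in particular the whole estimate involves only the densities $n^\eps(t)$ and not the covector $\phi^\eps$.

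Second, I would pass from this net flux to the dissipation via Cauchy--Schwarz with logarithmic-mean weights. Setting $\hat\Lambda_l := \Lambda\bra*{a_l n_1 n_l, b_{l+1} n_{l+1}}$, the definition~\eqref{e:def:LogMean} gives $J_l\bra*{\log a_l n_1 n_l - \log b_{l+1} n_{l+1}} = J_l^2/\hat\Lambda_l$, so that by~\eqref{e:BD:Dissipation} the restricted dissipation equals $\cD_{\mac}(n) = \sum_{l\geq l_0} J_l^2/\hat\Lambda_l$ and $\cD_{\mac}^\eps(n) = z_s\eps^{-(1-\alpha+2\gamma)}\cD_{\mac}(n)$. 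Cauchy--Schwarz then yields
\begin{equation*}
  \bra*{\sum_{l\geq l_0} J_l}^2 \leq \bra*{\sum_{l\geq l_0}\frac{J_l^2}{\hat\Lambda_l}}\bra*{\sum_{l\geq l_0}\hat\Lambda_l} = \frac{\eps^{1-\alpha+2\gamma}}{z_s}\,\cD_{\mac}^\eps(n)\sum_{l\geq l_0}\hat\Lambda_l .
\end{equation*}

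Third, I would bound the weight sum and collect the powers of $\eps$. From $\Lambda(a,b)\leq\tfrac12(a+b)$, the estimate $n_1\leq\varrho_0$ and $m^{-\gamma}\leq 1$ one gets $\sum_{l\geq l_0}\hat\Lambda_l\leq\tfrac12\bra*{\varrho_0+z_s+q}\sum_{l\geq l_0} a_l n_l$. Inserting this together with the identity of the first step and with $\sum_{l\geq l_0} a_l n_l = \eps^{1-\alpha}\int\lambda^\alpha\,\nu^\eps_t(\dx\lambda)$ produces the pointwise estimate
\begin{equation*}
  \bra*{h^\eps(t) - u^\eps(t)}^2 \leq \frac{\varrho_0+z_s+q}{2 z_s}\,\frac{\cD_{\mac}^\eps(n(t))}{\int\lambda^\alpha\,\nu^\eps_t(\dx\lambda)} ,
\end{equation*}
in which all powers of $\eps$ have cancelled. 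It then remains to bound $\int\lambda^\alpha\,\nu^\eps_t(\dx\lambda)$ from below uniformly in $t\in[0,T]$ and $\eps$, and to integrate over $[0,T]$.

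The main obstacle is precisely this uniform lower bound on the $\alpha$-moment. Since $\alpha<1$, on $\set*{\lambda\leq R}$ one has $\lambda^\alpha\geq R^{\alpha-1}\lambda$, whence $\int\lambda^\alpha\,\nu^\eps_t(\dx\lambda)\geq R^{\alpha-1}\int_{\set{\lambda\leq R}}\lambda\,\nu^\eps_t(\dx\lambda)$; choosing $R$ so large that the tail $\int_{\set{\lambda>R}}\lambda\,\nu^\eps_t(\dx\lambda)$ does not exceed half of the macroscopic mass produces a constant $c>0$ with $\int\lambda^\alpha\,\nu^\eps_t(\dx\lambda)\geq c$. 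The delicate point is that the tightness~\eqref{ass:tightness} is assumed only at $t=0$, so I would need to propagate it, and a uniform lower bound for the macroscopic mass, uniformly in $t$ and $\eps$; both I would draw from the a priori estimates for curves of finite action developed in Section~\ref{s:AprioriBD}. Granting the lower bound $c$, integrating the pointwise estimate over $[0,T]$ gives~\eqref{e:DissBound} with $C=(\varrho_0+z_s+q)/(2 z_s c)$, the algebraic identity and the Cauchy--Schwarz step being otherwise routine.
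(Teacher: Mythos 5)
Your proof is correct and arrives at the same quadratic dissipation bound as the paper, but by a genuinely different and arguably cleaner route. The paper's proof introduces the partition sums $A(z)=\sum_{l\geq l_0}a_l z n_l$ and $B=\sum_{l\geq l_0}b_{l+1}n_{l+1}$, applies Jensen's inequality to the one-homogeneous convex function $a\mapsto a\log(1+a)$ to obtain first the non-linearized bound $\bra*{u-h}\log\frac{1+u}{1+h}\leq \cD_{\mac}(n)/A(z_s)$, and only then linearizes the logarithm; that last step requires the additional smallness estimates $\abs{h}\leq C\eps^{\gamma/2}$ (from~\eqref{e:Apriori1}) and $u(n_1)=O(\eps^{x\gamma})$ (from $B\leq A(z_s)\bra*{1+O(\eps^{x\gamma})}$, which uses the explicit form of the rates). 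Your Cauchy--Schwarz with logarithmic-mean weights, $\bra*{\sum_{l\geq l_0}J_l}^2\leq \cD_{\mac}(n)\sum_{l\geq l_0}\hat\Lambda_l$, combined with $\Lambda(a,b)\leq\tfrac12(a+b)$, $n_1\leq\varrho_0$ and $b_{l+1}\leq (z_s+q)a_{l+1}$, produces the quadratic bound in one stroke and bypasses both the linearization and the smallness estimates, at the price of the slightly worse constant $\tfrac12(\varrho_0+z_s+q)$ in place of essentially $z_s$ --- irrelevant here since $C$ is unspecified. The two arguments then coincide in the final step: the uniform-in-$(t,\eps)$ lower bound $\int\lambda^\alpha\dx{\nu^\eps_t}\geq c$, which the paper supplies through Lemma~\ref{lem:LowerMoment} together with the propagation of tightness and of the excess mass~\eqref{e:Scale:MassExcess} along curves of finite action; your deferral of exactly these two points to the a priori estimates of Section~\ref{s:AprioriBD} is the right call.

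One caution on your opening identity: $h^\eps-u^\eps=\sum_{l\geq l_0}J_l\big/\bra*{\eps^\gamma\sum_{l\geq l_0}a_l n_l}$ holds only if the numerator of $u^\eps$ is read as $\sum_{l\geq l_0}\bra*{b_{l+1}n_{l+1}-z_s a_l n_l}$; with the numerator as literally printed in the statement the difference picks up the extra term $(1-z_s)\eps^{-\gamma}$ and the claimed bound would fail for $z_s\neq 1$. The $z_s$-weighted reading is evidently the intended one --- it is what the paper's own proof establishes, via $u^\eps=z_s\eps^{-\gamma}u(z_s)$ and $h^\eps=z_s\eps^{-\gamma}h$ --- so your silent correction is justified, but it should be stated explicitly rather than absorbed into ``putting over a common denominator.''
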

The above results together with a refined energy-dissipation estimate based on a logarithmic Sobolev inequality allows to establish detailed information on the distribution of the small clusters for curves of rescaled finite action and in particular for every solution of the time-rescaled Becker--Döring equation~\eqref{e:BD:GF:rescaled}.
The result makes part of the formal asymptotic contained in~\cite[Section 3]{Niethammer2003} rigorous.
\begin{theorem}[Quasistationary distribution]\label{thm:QuasiSmall}
  For any curve $(n^\eps,\phi^\eps)\in \cCE_T^\eps$ such that $\cJ^\eps(n^\eps) < \infty$ uniformly in $\eps$ and $\nu^\eps_0$ satisfying~\eqref{ass:tightness} the small cluster follow a quasistationary distribution dictated by $n_1$: For
  $l_0 = \lfloor\eps^{-x}\rfloor$ with $x$ satisfying~\eqref{ass:cutoff} holds
  \begin{equation*}
     \int_0^T \cH_{\mic}\bra[\big]{ n^\eps(t) \mid \omega(n_1^\eps(t))}  \dx{t} \leq C \eps^{\gamma+(1-x)(1-\alpha+\gamma)} \int_0^T \cD^\eps_{\mic}(n^\eps_t) \dx{t}  \label{e:monexp:q3},
  \end{equation*}
  where $\omega_l(z) = z^l Q_l$ as defined in~\eqref{e:BD:equilibrium}, $\cD_{\mic}^\eps$ is defined like $\cD^\eps$ with summation restricted to $\set{1,\dots, l_0-1}$ and $\cH_{\mic}$ is the microscopic relative entropy defined by
  \begin{equation*}
    \cH_{\mic}(n \mid \omega(z) ) := \sum_{l=1}^{l_0-1} \omega_l(z) \psi\bra*{\frac{n_l}{\omega_l(z)}} \quad\text{with}\quad \psi(x) = x \log x - x +1.
  \end{equation*}
  In particular, for a.e.\ $t\in(0,T)$ it holds
  \begin{equation}\label{e:monexp:micF}
     \lim_{\eps\to 0} \cF_{\mic}^\eps(n^\eps(t)) = 0 \qquad\text{and}\qquad  \lim_{\eps\to 0}  \cF^\eps_{\mac}(\nu^\eps_t) = E(\nu_t) ,
  \end{equation}
  where $\cF_{\mic}^\eps$ is defined like $\cF^\eps$ with summation restricted to $\set{1,\dots,l_0-1}$.
\end{theorem}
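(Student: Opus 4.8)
The plan is to reduce the integral bound to a pointwise-in-time functional inequality of modified logarithmic Sobolev type for the Becker--Döring system truncated to the clusters $\set{1,\dots,l_0-1}$, treating the monomer concentration $n_1$ as a frozen parameter that selects the local equilibrium $\omega(n_1)$, and then to integrate in time while tracking the scaling. The crucial observation is that the dissipation~\eqref{e:BD:Dissipation} is exactly the entropy production of this reversible birth--death structure, so that the natural object relating $\cH_{\mic}(n\mid\omega(n_1))$ and the unscaled microscopic dissipation $\cD_{\mic}$ is a modified logarithmic Sobolev inequality, in the spirit of~\cite{Canizo2015}. Note that the monomer contribution drops out of $\cH_{\mic}(\cdot\mid\omega(n_1))$ because $\omega_1(n_1)=n_1$, which makes $\omega(n_1)$ the genuine invariant state of the frozen microscopic dynamics.

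The heart of the argument is the $\eps$-dependent inequality that there is a constant $\rho(l_0)$ with
\begin{equation*}
  \cH_{\mic}\bra*{n\mid\omega(n_1)} \leq \frac{1}{2\rho(l_0)}\,\cD_{\mic}(n), \qquad \rho(l_0)\gtrsim l_0^{-(1-\alpha+\gamma)} ,
\end{equation*}
for every admissible $n$. I would obtain this by comparing the truncated chain with an auxiliary birth--death chain and applying a Hardy--Muckenhoupt criterion for the modified logarithmic Sobolev constant. The exponent $1-\alpha+\gamma$ is dictated by Assumption~\ref{ass:BD:rates}: the restoring gap $b_{l+1}-a_l z_s\sim q\,l^{\alpha-\gamma}$ drives the entropic relaxation, while the stretched-exponential decay of $\omega_l=z_s^lQ_l$ (Lemma~\ref{lem:BD:assymptotic_Q}) makes the tail harmless, and the polynomial degeneracy in $l_0$ reflects the diffusive width of the chain on $\set{1,\dots,l_0-1}$. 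I expect this to be the main obstacle, both because $\omega(n_1)$ depends on the state variable $n_1$ and because the truncation creates a boundary flux $J_{l_0-1}$ that must be absorbed by the entropy production; one checks that $\cD_{\mic}$ dominates the internal Dirichlet form of the reflected chain so that the inequality survives the cut-off.

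Granting this, the asserted estimate follows by inserting the scaling relations. Since $\cD^\eps_{\mic}=z_s\eps^{-(1-\alpha+2\gamma)}\cD_{\mic}$ and $l_0=\lfloor\eps^{-x}\rfloor$, so that $\rho(l_0)\gtrsim\eps^{x(1-\alpha+\gamma)}$, one computes
\begin{equation*}
  \cH_{\mic}\bra*{n\mid\omega(n_1)} \leq \frac{\eps^{1-\alpha+2\gamma}}{2z_s\,\rho(l_0)}\,\cD^\eps_{\mic}(n) \leq C\,\eps^{\gamma+(1-x)(1-\alpha+\gamma)}\,\cD^\eps_{\mic}(n) ,
\end{equation*}
using the identity $1-\alpha+2\gamma-x(1-\alpha+\gamma)=\gamma+(1-x)(1-\alpha+\gamma)$. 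Integrating over $[0,T]$ gives the stated bound.

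For the pointwise consequences I multiply the integral bound by $z_s\eps^{-\gamma}$. Since $\cJ^\eps(n^\eps)<\infty$ bounds $\int_0^T\cD^\eps_{\mic}(n^\eps_t)\dx{t}$ uniformly, the right-hand side becomes $Cz_s\eps^{(1-x)(1-\alpha+\gamma)}\int_0^T\cD^\eps_{\mic}\dx{t}$, which tends to zero because $(1-x)(1-\alpha+\gamma)>0$; hence $z_s\eps^{-\gamma}\int_0^T\cH_{\mic}(n^\eps(t)\mid\omega(n_1^\eps(t)))\dx{t}\to 0$ and, along a subsequence, the integrand vanishes for a.e.\ $t$. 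To replace the local reference $\omega(n_1)$ by the fixed $\omega(z_s)$ in $\cF^\eps_{\mic}=z_s\eps^{-\gamma}\cH_{\mic}(\cdot\mid\omega(z_s))$, I expand the difference of the two relative entropies, which to leading order equals $\log(n_1/z_s)\sum_{l<l_0}l(n_l-\omega_l(z_s))$; using $\log(n_1/z_s)=O(\eps^\gamma)$ (boundedness of $h^\eps$, via~\eqref{ass:tightness} and Proposition~\ref{prop:stability}), a weighted Csiszár--Kullback--Pinsker estimate for the small-cluster excess mass, and the summability of $\sum_l l^2\omega_l(z_s)$ from the stretched-exponential tail, this reference change contributes a vanishing amount after multiplication by $z_s\eps^{-\gamma}$, yielding $\cF^\eps_{\mic}(n^\eps(t))\to 0$ for a.e.\ $t$. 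Finally $\cF^\eps_{\mac}(\nu^\eps_t)\to E(\nu_t)$ follows from $\cF^\eps=\cF^\eps_{\mic}+\cF^\eps_{\mac}$ up to the negligible correction at $l_0$ admitted by~\eqref{ass:cutoff}, the energy expansion~\eqref{e:Scaling:1stOrderEnergy}, and the lower bound $\liminf_{\eps\to 0}\cF^\eps(\nu^\eps_t)\geq E(\nu_t)$ from Theorem~\ref{thm:Scale:conv:CfA}.
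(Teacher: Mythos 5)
Your proposal follows essentially the same route as the paper: the core is the same state-dependent energy--dissipation (modified log-Sobolev) inequality $\cH_{\mic}(n\mid\omega(n_1))\leq C\, l_0^{1-\alpha+\gamma}\,\cD_{\mic}(n)$ obtained via a Hardy--Muckenhoupt-type criterion for the truncated birth--death chain (the paper invokes the corresponding results of Ca\~nizo--Lods after lower-bounding $\cD_{\mic}$ by the square-root Dirichlet form), followed by the identical scaling computation $1-\alpha+2\gamma-x(1-\alpha+\gamma)=\gamma+(1-x)(1-\alpha+\gamma)$ and the same change of reference measure from $\omega(n_1^\eps)$ to $\omega(z_s)$ controlled by the Csisz\'ar--Pinsker estimate and the $L^2$-in-time bound on $h^\eps$. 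The argument is correct and matches the paper's proof in all essential steps.
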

\begin{remark}
  The statement~\eqref{e:monexp:micF} is not enough to ensure well-prepared initial data, since the statement only holds for a.e.\ $t\in [0,T]$.
  However, it suggests that the statement of Corollary~\ref{cor:ConvDeGiorgi} holds already under the assumption of \emph{macroscopically well-prepared} initial data:
  \begin{equation}\label{ass:wellprepared:initial:mac}
    \lim_{\eps \to 0} E(\nu^\eps_0) = E(\nu_0) .
  \end{equation}
  The assumption~\eqref{ass:wellprepared:initial:mac} together with the tightness condition~\eqref{ass:tightness} are natural, since they are also a sufficient condition for establishing continuous dependency on the initial data for the limiting gradient flow (cf.\ Corollary~\ref{cor:LSW:continitial}).
\end{remark}
\begin{remark}
  It is possible to use a different rescaling of the Becker--Döring system with different assumptions on the coagulation and fragmentation rates to obtain the LSW equation in the limit (cf.\ \cite{Collet2002,Laurencot2002}). Recently, within this scaling regime a quasi steady approximation was used to derive a suitable boundary condition for the macroscopic limits (cf.\ \cite{Deschamps2016}).
\end{remark}

\section{The LSW equation and its gradient structure}\label{S:LSW}

To make the formal calculation from Section~\ref{s:Intro:MacLim} rigorous, we introduce the concept of curves of finite action for the LSW equation.
\begin{definition}[Curves of finite action]
 A weakly$^*$ continuous curve $[0,T]\ni t \mapsto \nu_t \in M$ is called a curve of finite action, if
 there exists a measurable vector field $[0,T] \ni t \mapsto w_t \in T_{\nu_t}^* M$ such that
 \begin{equation*}
   A(\nu,w) := \int_0^T \int \lambda^{\alpha} |w_t|^2 \; \nu_t(\dx\lambda) < \infty ,
 \end{equation*}
 where the pair $(\nu,w)\in \CE_T$ solves the continuity equation
 \begin{equation}\label{e:LSW:continuityEqu}
    \partial_t \nu_t + \partial_\lambda\bra*{\lambda^\alpha w_t \nu_t} = 0   \quad\text{ in }\quad C_c^\infty([0,T]\times \R_+)^*.
 \end{equation}
\end{definition}
Before formulating the compactness statement, we want to revise the definition of the dissipation~\eqref{e:LSW:dissipation} and generalize it to curves of finite action.
The dissipation acts as a weak upper gradient.
Hence, for a curve of finite action $[0,T]\ni t \mapsto \nu_t \in M$ and using the fact that $w_t \in T^*_{\nu_t} M$ for all $t\in [0,T]$ it formally follows
\begin{equation}\label{e:LSW:DD}\begin{split}
  \abs*{E(\nu_T) - E(\nu_0)} &=  \abs*{ -\int_0^T \int q \lambda^{\alpha-\gamma} w_t \dx{\nu_t} \dx{t} } \leq \int_0^T \lambda^\alpha \abs*{ \int \bra*{u(t) - q \lambda^{-\gamma}} \ w_t \dx{\nu_t}} \dx{t} \\
  &\leq \int_0^T \bra*{ \int  \lambda^\alpha  \bra*{u(\nu_t)-q\lambda^{-\gamma}}^2\dx{\nu_t} }^{\frac{1}{2}} \bra*{ A(\nu_t, w_t)}^\frac{1}{2} \dx{t} ,
\end{split}\end{equation}
where $u(\nu_t)$ is an arbitrary function on $M$.
The choice of $u(\nu_t)$ is fixed by a minimization in $L^2$.
That is, we define the dissipation as the weighted $L^2$-minimal upper gradient for the energy.
Before doing so, we need as an auxiliary result, that a finite dissipation implies the existence of the $\alpha$-moment for a curve of finite action.
\begin{lemma}[Moment estimate]
  Assume $\alpha \geq 1-3\gamma$.
  Let $(\nu,w)\in \CE_T$ be a curve of finite action in $M$ such that
 \begin{equation}\label{e:LSW:minDissipation}
  \inf_{u\in L^2([0,T])} \int_0^T \int \lambda^\alpha  \bra*{u(t) - q \lambda^{-\gamma}}^2 \dx{\nu_t} \dx{t} < \infty .
 \end{equation}
 Then, it holds the moment estimate
 \begin{equation}\label{e:LSW:MomentDissipation}
   \int_0^T \int \lambda^{\alpha} \dx{\nu_t} \dx{t} < \infty .
 \end{equation}
\end{lemma}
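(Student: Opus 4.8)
The plan is to reduce the estimate to the behaviour of $\nu_t$ near $\lambda=0$ and to exploit that the fragmentation weight $q\lambda^{-\gamma}$ in the dissipation blows up there. Write $m_\beta(t):=\int\lambda^\beta\dx{\nu_t}$, so that the state constraint $\nu_t\in M$ reads $m_1(t)=\bar\varrho$, and fix a near-optimal $u\in L^2([0,T])$ realizing the finiteness in \eqref{e:LSW:minDissipation}, setting $D(t):=\int\lambda^\alpha\bra*{u(t)-q\lambda^{-\gamma}}^2\dx{\nu_t}$ with $\int_0^T D(t)\dx{t}<\infty$. Splitting $m_\alpha(t)=\int_{\lambda\ge1}\lambda^\alpha\dx{\nu_t}+\int_{\lambda<1}\lambda^\alpha\dx{\nu_t}$, the first term is harmless: since $\alpha<1$ we have $\lambda^\alpha\le\lambda$ for $\lambda\ge1$, hence $\int_{\lambda\ge1}\lambda^\alpha\dx{\nu_t}\le m_1(t)=\bar\varrho$, which is integrable over $[0,T]$. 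It remains to control the small-cluster part $\int_{\lambda<1}\lambda^\alpha\dx{\nu_t}$.

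For the second term I distinguish, at each time, the region where the fragmentation weight dominates from the resonant region. On $\set*{\lambda<1:\ q\lambda^{-\gamma}\ge2\abs{u(t)}}$ one has $\abs{u(t)-q\lambda^{-\gamma}}\ge\tfrac12 q\lambda^{-\gamma}$, and since $\lambda^{-2\gamma}\ge1$ for $\lambda\le1$ this yields $\lambda^\alpha\le\lambda^{\alpha-2\gamma}\le\tfrac{4}{q^2}\lambda^\alpha\bra*{u(t)-q\lambda^{-\gamma}}^2$; integrating against $\nu_t$ and then in time bounds this contribution by $\tfrac{4}{q^2}\int_0^T D(t)\dx{t}<\infty$. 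On the complementary resonant region $\set*{\lambda<1:\ q\lambda^{-\gamma}<2\abs{u(t)}}$, which is nonempty only when $\abs{u(t)}>q/2$, the dissipation gives no lower bound, so I estimate the $\lambda^\alpha$-mass by interpolation: the elementary moment inequality gives $\int_{\lambda<1}\lambda^\alpha\dx{\nu_t}\le m_{\alpha-2\gamma}(t)^\theta\,m_1(t)^{1-\theta}$ with $\theta=\frac{1-\alpha}{1-\alpha+2\gamma}\in(0,1)$, while $\bra*{q\lambda^{-\gamma}}^2\le2\bra*{u(t)-q\lambda^{-\gamma}}^2+2u(t)^2$ yields $m_{\alpha-2\gamma}(t)\le\frac{2}{q^2}\bra*{D(t)+u(t)^2 m_\alpha(t)}$. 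Inserting this, using $(x+y)^\theta\le x^\theta+y^\theta$ and absorbing the resulting $m_\alpha(t)^\theta$ term by Young's inequality (legitimate since $\theta<1$) produces a pointwise bound of the form $m_\alpha(t)\le C\,D(t)^\theta+C\,\abs{u(t)}^{2\theta/(1-\theta)}+C$.

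Integrating this in time is where the hypothesis on $\alpha$ and $\gamma$ is consumed, and is the main obstacle. The term $D(t)^\theta$ is always integrable on $[0,T]$ because $\theta<1$ and $D\in L^1$. The decisive term is $\int_0^T\abs{u(t)}^{2\theta/(1-\theta)}\dx{t}$, which is finite for $u\in L^2$ precisely when $\tfrac{2\theta}{1-\theta}\le2$, i.e.\ $\theta\le\tfrac12$, i.e.\ $\alpha\ge1-2\gamma$. This crude interpolation therefore falls short of the stated threshold $\alpha\ge1-3\gamma$, and closing the gap is the hard part: the resonant mass sitting at the scale $\lambda^*(t)=(q/u(t))^{1/\gamma}$ is invisible to the dissipation, and to bound its contribution one must additionally exploit that $(\nu,w)$ is a genuine curve of finite action. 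Concretely, I expect to use the continuity equation \eqref{e:LSW:continuityEqu} together with $\int_0^T A(\nu_t,w_t)\dx{t}<\infty$ to control the rate at which mass can concentrate near $\lambda^*(t)$: mass conservation on $M$ forbids transporting an isolated concentration while keeping $m_1\equiv\bar\varrho$, so a finite action penalizes the fast concentration that the interpolation argument pessimistically allows, and quantifying this interplay between the transport cost and the $L^2$-bound on $u$ is what should upgrade the admissible range to $\alpha\ge1-3\gamma$. As a byproduct one records that finiteness of the inner integral already forces $m_{\alpha-2\gamma}(t)<\infty$ for a.e.\ $t$, since near $\lambda=0$ the integrand behaves like $q^2\lambda^{\alpha-2\gamma}$ for any finite value of $u(t)$.
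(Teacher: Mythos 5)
Your argument is sound as far as it goes, but it establishes the lemma only for $\alpha\ge 1-2\gamma$, as you yourself observe: the pointwise-in-time bound $m_\alpha(t)\le C\,D(t)^\theta+C\abs*{u(t)}^{2\theta/(1-\theta)}+C$ with $\theta=\frac{1-\alpha}{1-\alpha+2\gamma}$ integrates in time only when $\tfrac{2\theta}{1-\theta}\le 2$. For the remaining range $1-3\gamma\le\alpha<1-2\gamma$ you propose to invoke the continuity equation and the finiteness of the action, but this plan is not carried out, and it is in fact not the right tool: the paper closes the gap without touching the transport structure at all. The missing idea is to abandon the pointwise-in-time bound in the final step and aim only for $\int_0^T m_\alpha(t)\dx{t}<\infty$, which weakens the integrability demanded of $u$. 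The paper's proof is a three-stage bootstrap, each stage a Cauchy--Schwarz estimate of $\int\bra*{u(t)-q\lambda^{-\gamma}}\eta(\lambda)^{\kappa}\dx{\nu_t}$ against the dissipation, with $\eta(\lambda)=\lambda\chi_{[0,1]}(\lambda)+\chi_{(1,\infty)}(\lambda)$: (i) $\kappa=1$ gives $u\in L^2((0,T))$ with $\int_0^T u^2\dx{t}\le C\int_0^T D\dx{t}+C$ (your dissipation-dominated/resonant splitting achieves essentially the same); (ii) $\kappa=1-\gamma$ gives the \emph{$L^2$-in-time} bound $\int_0^T\bra*{\int\lambda^{1-2\gamma}\dx{\nu_t}}^2\dx{t}<\infty$, i.e.\ your $\alpha\ge1-2\gamma$ conclusion, but quantified in $L^2(\dx{t})$ rather than pointwise; (iii) a third Cauchy--Schwarz, now integrated in time with $\tilde\kappa=1-2\gamma$, bounds $\int_0^T\abs*{\int(u-q\lambda^{-\gamma})\eta^{1-2\gamma}\dx{\nu_t}}\dx{t}$ by $\bra*{\int_0^T D\dx{t}}^{1/2}\bra*{\int_0^T\int\eta^{2(1-2\gamma)}\lambda^{-\alpha}\dx{\nu_t}\dx{t}}^{1/2}$, whose second factor is finite by stage (ii) exactly when $\alpha\le 1-2\gamma$; pairing $u\in L^2(\dx{t})$ with $\int\eta^{1-2\gamma}\dx{\nu_t}\in L^2(\dx{t})$ then isolates $\int_0^T\int_{\lambda<1}\lambda^{1-3\gamma}\dx{\nu_t}\dx{t}<\infty$, and $\lambda^\alpha\le\lambda^{1-3\gamma}$ for $\lambda\le1$ once $\alpha\ge1-3\gamma$.

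The structural point you are missing is that each factor of $\lambda^{-\gamma}$ extracted from the dissipation costs one application of Cauchy--Schwarz, and the $L^2$-in-time output of one stage is precisely what renders the ``bad'' factor of the next stage finite; your one-shot interpolation tries to extract $\lambda^{-2\gamma}$ at once and therefore overpays in integrability of $u$. Two smaller remarks: the Young-absorption step implicitly assumes $m_\alpha(t)<\infty$, which should be secured first by truncation; and your closing observation that the inner integral already forces $m_{\alpha-2\gamma}(t)<\infty$ for a.e.\ $t$ is correct but does not help, since that quantity need not be integrable in time.
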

\begin{proof}
  Let us define $D(\nu,u) = \int \lambda^\alpha \bra*{u - q \lambda^{-\gamma}}^2 \dx{\nu}$.
  We observe that for $\alpha \geq 1-\gamma$, there is nothing to show, since the bound follows by interpolation from $\sup_{t\in [0,T]} E(\nu_t) < \infty$ and $\int \lambda \dx{\nu_t} =\bar\varrho$.

  Therefore, assume now $\alpha \leq 1-\gamma$.
  Let us define $\eta(\lambda) := \lambda \chi_{[0,1]}(\lambda) + \chi_{(1,\infty)}(\lambda)$.
  Then, we can estimate with Cauchy--Schwarz for any $\kappa \in \R$
  \begin{equation}\label{e:LSW:DissEst:ap}
  \begin{split}
    &\int_0^T \bra*{ \int \bra*{u(t) - q\lambda^{-\gamma}} \eta(\lambda)^\kappa \dx{\nu_t}}^2 \dx{t}\leq \int_0^T D(\nu_t,u(t)) \int \eta(\lambda)^{2\kappa} \lambda^{-\alpha} \dx{\nu_t} \dx{t} \\
    &\qquad\qquad \leq  \int_0^T D(\nu_t, u(t)) \dx{t}  \ \sup_{t\in [0,T]} \int \eta(\lambda)^{2\kappa} \lambda^{-\alpha}\dx{\nu_t} .
  \end{split}
  \end{equation}
  Since, $\sup_{t\in [0,T]} E(\nu_t) < \infty$ and $\int \lambda \dx{\nu_t} =\bar\varrho$, we can use interpolation to bound the $\sup$ in $t$ provided $2\kappa -\alpha \geq 1-\gamma$.
  On, the other hand, since $\int \lambda \dx{\nu_t} = \bar\varrho$ for all $t\geq 0$, there exists a constant $\bar\varrho_T > 0$ for any $T>0$ such that $\int \eta(\lambda) \dx{\nu_t} \geq \bar\varrho_T$ (see also Lemma~\ref{lem:LowerMoment} for a similar argument).
  We can estimate the left hand side of~\eqref{e:LSW:DissEst:ap} from below in the case $\kappa=1$ by using the Young inequality for some $0<\tau <1$
  \[\begin{split}
    \int_0^T \bra*{ \int \bra*{u(t) - q\lambda^{-\gamma}} \eta(\lambda) \dx{\nu_t}}^2 \dx{t} &\geq \bra*{1-\tau}  \bar\varrho_T^2 \int_0^T u(t)^2 \dx{t} \\
    &\phantom{\geq} - \bra*{\frac{1}{\tau}-1} \int_0^T \bra*{(1-\gamma) E(\nu_t)}^2 \dx{t} .
  \end{split}\]
  Since, $E(\nu_t)\in L^\infty([0,T])$, we obtain the first a priori estimate
  \begin{equation}\label{e:LSW:DissEst:ap1}
    \int_0^T u(t)^2 \dx{t} \leq C_T \int_0^T D(\nu_t, u(t))\dx{t} + C_T .
  \end{equation}
  Another choice is $\kappa=1-\gamma$ thanks to $\alpha \leq 1-\gamma$.
  Then, we estimate the left hand side of~\eqref{e:LSW:DissEst:ap} by using again the Young inequality with $\tau\in (0,1)$ as follows
  \[\begin{split}
    \int_0^T \bra*{ \int \bra*{u(t) - q\lambda^{-\gamma}} \eta(\lambda)^{1-\gamma} \dx{\nu_t}}^2 \dx{t} &\geq \bra*{1-\tau} q \int \bra*{\int_0^T \lambda^{1-2\gamma} \dx{\nu_t}}^2 \dx{t} \\
    &\phantom{\geq} - \bra*{\frac{1}{\tau}-1} \int_0^T u(t)^2 \bra*{(1-\gamma) E(\nu_t)}^2 \dx{t} .
  \end{split}\]
  Since, we trivially have $\int_1^\infty \lambda^{1-2\gamma}\dx{\nu_t} \leq \int \lambda \dx{\nu_t} = \bar\varrho$, it follows by using the first a priori bound~\eqref{e:LSW:DissEst:ap1} and $E(\nu_t) \in L^\infty([0,T])$ the second a priori estimate
  \begin{equation}\label{e:LSW:DissEst:ap2}
    \int_0^T \bra*{ \int \lambda^{1-2\gamma} \dx{\nu_t}}^2 \dx{t} \leq C_T \int_0^T D(\nu_t, u(t))\dx{t} + C_T ,
  \end{equation}
  which shows~\eqref{e:LSW:MomentDissipation} for $\alpha \geq 1-2\gamma$.
  Hence, we assume now $\alpha \leq 1-2\gamma$.
  Similarly to~\eqref{e:LSW:DissEst:ap}, we can now estimate by Cauchy--Schwarz for some $\tilde \kappa \in \R$
  \begin{equation}\label{e:LSW:DissEst:ap3}
  \begin{split}
    \int_0^T \abs*{ \int \bra*{u(t) - q\lambda^{-\gamma}} \eta(\lambda)^{\tilde\kappa} \dx{\nu_t}} \dx{t} &\leq \bra*{\int_0^T D(\nu_t,u(t))\dx{t}}^\frac12 \times \\
     &\qquad\bra*{\int_0^T \int \eta(\lambda)^{2\tilde\kappa} \lambda^{-\alpha} \dx{\nu_t} \dx{t} }^\frac12 .
  \end{split}
  \end{equation}
  The second factor is bounded for $2\tilde \kappa  - \alpha \geq 1-2\gamma$ by~\eqref{e:LSW:DissEst:ap2}.
  Hence, a possible choice is $\tilde \kappa = 1-2\gamma$ by the assumption $\alpha \leq 1-2\gamma$.
  Since $u \in L^2((0,T))$ and $\int \lambda^{1-2\gamma} \dx{\nu_t} \in L^2((0,T))$, we conclude the estimate~\eqref{e:LSW:MomentDissipation}.
\end{proof}
The Lemma provides the crucial ingredient to conclude that the dissipation is well-defined and justifies the use of the weak formulation in the first step of~\eqref{e:LSW:DD}.
\begin{proposition}\label{lem:LSW:Dissipation}
  Assume $\alpha \geq 1-3\gamma$.
  Let $(\nu,w)\in \CE_T$ be a curve of finite action in~$M$ such that~\eqref{e:LSW:minDissipation} holds.
  Then the associated minimization problem has a unique solution $u\in L^2([0,T])$ such that
  \begin{equation}\label{e:LSW:covector}
    \lambda \mapsto u(t) - q\lambda^{-\gamma} \in T^*_{\nu_t} M \qquad \text{ for a.e.\ } t\in [0,T].
  \end{equation}
  Moreover, the associated functional defined for a.e.\ $t\in [0,T]$ by
  \begin{equation}\label{e:LSW:Dissipation}
    D(\nu_t) := \int \lambda^\alpha \bra*{u(t)-q \lambda^{-\gamma}}^2 \dx{\nu_t} \quad\text{with}\quad u(t) :=  \frac{q\int\lambda^{\alpha-\gamma} \dx{\nu_t}}{\int \lambda^\alpha \dx{\nu_t}},
  \end{equation}
  called \emph{dissipation}, is a~\emph{strong upper gradient} for the energy $E$.
  That is, it holds for any curve $(\nu,w)\in \CE_T$ of finite action
 \begin{equation}\label{e:LSW:strongupper}
  \abs{ E(\nu_t) - E(\nu_s) } \leq \int_s^t \sqrt{D(\nu_r)} \, \sqrt{A(\nu_r,w_r)} \, \dx{r} , \qquad \forall 0\leq s< t \leq T.
 \end{equation}
 Hereby, equality in~\eqref{e:LSW:strongupper} holds if and only if $w_t(\lambda) = \pm\bra{u(t) - q\lambda^{-\gamma}}$ for $\nu_t$-a.e.\ $\lambda \in \R_+$.
\end{proposition}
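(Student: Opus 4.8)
The plan is to establish the three claims in turn: first the well-posedness of the minimization problem and the explicit minimizer, second the characterization \eqref{e:LSW:covector}, and finally the strong-upper-gradient inequality \eqref{e:LSW:strongupper} together with its equality case. For the minimization, I would work in the Hilbert space $L^2([0,T])$ and observe that, for each fixed $t$, the map $u \mapsto \int \lambda^\alpha (u - q\lambda^{-\gamma})^2 \dx{\nu_t}$ is a strictly convex quadratic in the scalar $u$, provided the $\alpha$-moment $\int \lambda^\alpha \dx{\nu_t}$ is positive and finite. Finiteness of this moment for a.e.\ $t$ is exactly the content of the preceding Lemma (this is where the assumption $\alpha \geq 1-3\gamma$ enters), and positivity follows since $\nu_t \in M$ carries mass $\bar\varrho > 0$. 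Minimizing the scalar quadratic pointwise in $t$ gives the first-order condition $\int \lambda^\alpha(u(t) - q\lambda^{-\gamma}) \dx{\nu_t} = 0$, whence the explicit formula $u(t) = q\int\lambda^{\alpha-\gamma}\dx{\nu_t}\big/\int\lambda^\alpha\dx{\nu_t}$ in \eqref{e:LSW:Dissipation}; uniqueness is immediate from strict convexity. I would then check $u \in L^2([0,T])$ using the a priori bound \eqref{e:LSW:DissEst:ap1} derived inside the Lemma's proof, which already controls $\int_0^T u(t)^2 \dx{t}$ by the finite dissipation.

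The covector property \eqref{e:LSW:covector} is precisely the first-order condition rewritten: $\int \lambda^\alpha (u(t) - q\lambda^{-\gamma})\,\nu_t(\dx{\lambda}) = 0$ says that $w_t := u(t) - q\lambda^{-\gamma}$ lies in the set $\set*{w : \int \lambda^\alpha w\,\nu_t(\dx\lambda) = 0}$, which by the inclusion noted after \eqref{e:LSW:Onsager} contains $T^*_{\nu_t}M$; to land genuinely in $T^*_{\nu_t}M$ one must exhibit $s = -\partial_\lambda(\lambda^\alpha w_t \nu_t) \in T_{\nu_t}M$, i.e.\ verify $\int \lambda s\,\dx\lambda = 0$, which follows by the same integration by parts that produced the inclusion. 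I would record this for a.e.\ $t$, the null set being where the $\alpha$-moment fails to be finite.

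For the strong-upper-gradient estimate, the strategy is to make the formal chain \eqref{e:LSW:DD} rigorous. The key identity is that the energy difference can be written weakly against the flux: using the continuity equation \eqref{e:LSW:continuityEqu} tested (after mollification in time) with the function $\lambda \mapsto \tfrac{q}{1-\gamma}\lambda^{1-\gamma}$, one obtains $E(\nu_t) - E(\nu_s) = -\int_s^t \int q\lambda^{\alpha-\gamma} w_r \dx{\nu_r}\dx{r}$. Because $w_r \in T^*_{\nu_r}M$ the covector condition lets me subtract the null term $u(r)\int \lambda^\alpha w_r \dx{\nu_r} = 0$ inside the integrand, replacing $q\lambda^{\alpha-\gamma}$ by $\lambda^\alpha(u(r) - q\lambda^{-\gamma}) = \lambda^\alpha w_r^{\mathrm{opt}}$; then the Cauchy--Schwarz inequality in $L^2(\lambda^\alpha \dx{\nu_r})$ splits the integrand into $\sqrt{D(\nu_r)}\sqrt{A(\nu_r,w_r)}$, giving \eqref{e:LSW:strongupper}. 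The equality case is read directly off the equality case in Cauchy--Schwarz: equality forces $w_r(\lambda)$ to be $\nu_r$-a.e.\ proportional to $u(r) - q\lambda^{-\gamma}$, and matching the $L^2(\lambda^\alpha\dx\nu_r)$-norms (both integrands appearing symmetrically) fixes the constant to $\pm 1$, yielding $w_r = \pm(u(r) - q\lambda^{-\gamma})$.

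The main obstacle I anticipate is the rigorous justification of the weak energy-difference identity: the test function $\lambda^{1-\gamma}$ is neither compactly supported nor bounded, so testing the continuity equation with it requires a truncation-and-limit argument, and one must control the error terms near $\lambda = 0$ (where $\lambda^{-\gamma}$ is singular) and near $\lambda = \infty$ using only the moment bound \eqref{e:LSW:MomentDissipation}, the mass constraint $\int\lambda\,\dx\nu_t = \bar\varrho$, and the finite action. Establishing that the time-regularity of $t \mapsto E(\nu_t)$ is good enough for the pointwise-in-time integrand to make sense (absolute continuity of the energy along the curve) is the delicate technical point; everything else is convex analysis and Cauchy--Schwarz.
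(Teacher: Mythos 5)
Your proposal is correct and follows essentially the same route as the paper: the first part via the Euler--Lagrange condition $\int \lambda^\alpha\bra{u(t)-q\lambda^{-\gamma}}\dx{\nu_t}=0$ (the paper phrases it as a first variation in $L^2([0,T])$ rather than pointwise-in-$t$ minimization, but both rest on the moment bound~\eqref{e:LSW:MomentDissipation} and the a priori estimate~\eqref{e:LSW:DissEst:ap1}), and the second part by testing the continuity equation with truncations of $\lambda\mapsto\tfrac{q}{1-\gamma}\lambda^{1-\gamma}$, smuggling in $u(t)$ via the covector condition, and applying Cauchy--Schwarz. The truncation issue you flag as the delicate point is exactly what the paper addresses with the cut-offs $\zeta_n(\lambda)=n\lambda\chi_{[0,1/n)}+\chi_{[1/n,\infty)}$ and dominated convergence.
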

\begin{proof}
  In the first step, we show~\eqref{e:LSW:covector} and~\eqref{e:LSW:Dissipation}.
  Therefore, the first variation of the minimization problem~\eqref{e:LSW:minDissipation} along some $s:\R_+ \to \R$ is given by
  \begin{equation*}
    \int_0^T \int  \bra*{u(t) - q \lambda^{-\gamma}} \lambda^\alpha \dx{\nu_t} \, s(t) \dx{t} = 0 .
  \end{equation*}
  We show that is is well-defined by an estimate analog to~\eqref{e:LSW:DissEst:ap3}
  \[\begin{split}
    \abs*{\int_0^T \lambda^\alpha  \bra*{u(t) - q \lambda^{-\gamma}} \dx{\nu_t}\; s(t) \dx{t}} &\leq \bra*{ \int_0^T D(\nu_t, u(t)) \dx{t}}^\frac12 \times \\
    &\qquad\bra*{\int_0^T s(t)^2 \int \lambda^{\alpha} \dx{\nu_t}\dx{t}}^\frac12 ,
  \end{split}\]
  which is bounded thanks to the estimate~\eqref{e:LSW:MomentDissipation} for $s\in L^\infty((0,T))$.
  In addition the a prior estimate~\eqref{e:LSW:DissEst:ap1} shows that minimizer is actually in $L^2((0,T))$ and hence satisfying the Euler-Lagrange equation $\int \bra*{u(t)-q \lambda^{-\gamma}} \lambda^\alpha \dx{\nu_t} = 0$ for a.e.\ $t \in [0,T]$, which is nothing else than~\eqref{e:LSW:covector} also showing~\eqref{e:LSW:Dissipation}.

  It is left to show, that $D(\nu_t)$ is a strong upper gradient for the energy.
  Therefore, we fix a test function $\zeta\in C_c^\infty(\R_+)$ and calculate for a curve $(\nu,w)\in \CE_T$
  \begin{align*}
    \pderiv{}{t} \frac{q}{1-\gamma} \int \lambda^{1-\gamma} \zeta \, \dx\nu_t &= q \int \lambda^{\alpha-\gamma} \zeta \, w_t \, \dx\nu_t + \frac{q}{1-\gamma} \int \lambda^{1+\alpha-\gamma} \zeta' \, w_t \, \dx\nu_t =: \I + \II.
  \end{align*}
  Using the fact that $w_t\in T_{\nu_t}^* M$, we can smuggle in $u(t)$ and apply Cauchy--Schwarz to the first term $\I$, to obtain
  \begin{align*}
    \I \leq { A(\nu_t, w_t)}^\frac12 \ \bra*{ \int \lambda^{\alpha} \bra{ u - q\lambda^{-\gamma} \zeta}^2 \dx{\nu_t}}^\frac12,
  \end{align*}
  Hereby, equality holds if and only if $w_t = \pm w^\zeta_t$ with $w^\zeta_t := u - q\lambda^{-\gamma} \zeta$.
  Hence, by choosing $\zeta_n$ converging to $1$ from below the result~\eqref{e:LSW:strongupper} follows by integration in time and dominated convergence, provided the term $\II$ vanishes.
  By an additional approximation step, we can justify to choose the sequence $\zeta_n(\lambda) = n \lambda \chi_{[0,1/n)} + \chi_{[1/n,\infty)}$ and estimate~$\II$ by
  \begin{align*}
    \II \leq  \frac{1}{1-\gamma} \bra*{ \int_0^{\frac{1}{n}} \lambda^{\alpha} \abs*{w_t}^2 \dx{\nu_t}}^\frac12\ \bra*{ \int_0^{\frac{1}{n}} \lambda^{\alpha} \abs*{ u - q\lambda^{-\gamma} \lambda\zeta'_n}^2   \dx{\nu_t}}^\frac12.
  \end{align*}
  Since, we can assume the r.h.s.\ of~\eqref{e:LSW:strongupper} to be finite, we can conclude again by dominated convergence, that $\II \to 0$ as $n\to \infty$, which finishes the proof.
\end{proof}
\begin{lemma}[Tightness is preserved by curves of finite action]\label{lem:LSW:tight}
  Let $\set{\nu_0^\eps}_{\eps>0}$ be a family satisfying the tightness condition~\eqref{ass:tightness}.
  Then for any $T>0$ and any family of curves $\set{(\nu^\eps,w^\eps)\in \CE_T : \nu^\eps_{t=0} = \nu^\eps_0 }_{\eps>0}$ of uniformly finite action the family $\set{\nu_t^\eps}_{\eps>0}$ satisfies the tightness condition~\eqref{ass:tightness} uniformly in $\eps$ for any $t\in [0,T]$.
\end{lemma}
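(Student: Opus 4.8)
The plan is to propagate the tightness \eqref{ass:tightness} from $t=0$ to all $t\in[0,T]$ by testing the continuity equation \eqref{e:LSW:continuityEqu} against a smooth cut-off of the linear weight $\lambda$, and to control the resulting flux term by the uniformly bounded action, with a prefactor that vanishes as the cut-off radius grows; the vanishing is where the hypothesis $\alpha<1$ enters. Write $A_* := \sup_{\eps>0} A(\nu^\eps,w^\eps) < \infty$. First I would fix, for $R\ge 1$, a smooth nondecreasing $\theta_R$ with $\theta_R\equiv 0$ on $[0,R]$, $\theta_R\equiv 1$ on $[2R,\infty)$, $0\le\theta_R\le 1$ and $\abs{\theta_R'}\le C/R$, and set $\zeta_R(\lambda):=\lambda\,\theta_R(\lambda)$. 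Then $0\le\zeta_R(\lambda)\le\lambda$, one has $\zeta_R(\lambda)=\lambda$ for $\lambda\ge 2R$, and the derivative $\zeta_R'=\theta_R+\lambda\theta_R'$ is supported in $\set{\lambda\ge R}$ and bounded by a constant $C_0$ independent of $R$.

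Testing \eqref{e:LSW:continuityEqu} against $\zeta_R$ and using the weak-$*$ continuity of $t\mapsto\nu_t^\eps$ gives, for every $t\in[0,T]$,
\[ \int\zeta_R\,\dx\nu_t^\eps = \int\zeta_R\,\dx\nu_0^\eps + \int_0^t\!\int \zeta_R'(\lambda)\,\lambda^\alpha\,w_r^\eps\,\dx\nu_r^\eps\,\dx r. \]
The initial term obeys $\int\zeta_R\,\dx\nu_0^\eps\le\int_R^\infty\lambda\,\nu_0^\eps(\dx\lambda)$ (since $0\le\zeta_R\le\lambda$ and $\zeta_R$ vanishes on $[0,R]$), which is small uniformly in $\eps$ by \eqref{ass:tightness}. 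For the flux term, as $\zeta_R'$ is supported in $\set{\lambda\ge R}$ and bounded by $C_0$, Cauchy--Schwarz first in $\nu_r^\eps$ and then in time yields
\[ \abs*{\int_0^t\!\int\zeta_R'\lambda^\alpha w_r^\eps\,\dx\nu_r^\eps\,\dx r} \le C_0\,T^{1/2}A_*^{1/2}\bra*{\sup_{r\in[0,T]}\int_{\set{\lambda\ge R}}\lambda^\alpha\,\dx\nu_r^\eps}^{1/2}. \]
Here the decisive point is $\alpha<1$: for $\lambda\ge R\ge1$ one has $\lambda^\alpha\le R^{\alpha-1}\lambda$, hence $\int_{\set{\lambda\ge R}}\lambda^\alpha\,\dx\nu_r^\eps\le R^{\alpha-1}\bar\varrho$, using the conserved mass $\int\lambda\,\dx\nu_r^\eps=\bar\varrho$. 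Combining the two bounds with $\int_{2R}^\infty\lambda\,\nu_t^\eps(\dx\lambda)\le\int\zeta_R\,\dx\nu_t^\eps$ gives, uniformly in $t\in[0,T]$ and in $\eps>0$,
\[ \int_{2R}^\infty\lambda\,\nu_t^\eps(\dx\lambda) \le \int_R^\infty\lambda\,\nu_0^\eps(\dx\lambda) + C_0(\bar\varrho T)^{1/2}A_*^{1/2}\,R^{(\alpha-1)/2}, \]
and both terms vanish as $R\to\infty$, which is precisely the asserted uniform tightness of $\set{\nu_t^\eps}$.

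The only genuinely technical point—and what I expect to be the main obstacle—is that the weak formulation \eqref{e:LSW:continuityEqu} is a priori available only against $\zeta\in C_c^\infty(\R_+)$, whereas $\zeta_R$ grows linearly. I would obtain the displayed identity by first applying the weak form to a smooth compactly supported truncation $\zeta_{R,L}$ agreeing with $\zeta_R$ on $[0,L]$ and cut to zero beyond, and then letting $L\to\infty$. The truncation discrepancy is supported in $\set{\lambda\ge L}$, so it is controlled by exactly the same Cauchy--Schwarz estimate as above together with the first-moment tail bound $\nu_r^\eps(\set{\lambda\ge L})\le L^{-1}\bar\varrho$, and therefore vanishes as $L\to\infty$. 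No difficulty arises near $\lambda=0$, because $\zeta_R$ and $\zeta_R'$ vanish on $[0,R]$ with $R\ge1$, so all weights stay bounded there and no $\alpha$-moment control near the origin is needed. Consequently this approximation step should be routine rather than a real obstruction, and the essential content of the lemma is the moment interpolation $\lambda^\alpha\le R^{\alpha-1}\lambda$ from $\alpha<1$ in the core estimate.
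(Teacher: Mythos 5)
Your proof is correct and follows essentially the same route as the paper's: both test the continuity equation with a smooth cut-off of the weight $\lambda$ supported away from the origin (truncated at infinity to stay in $C_c^\infty$, with the truncation removed afterwards), apply Cauchy--Schwarz against the uniformly bounded action, and use $\alpha<1$ together with the conserved first moment to produce the decay $R^{-(1-\alpha)/2}$ in the cut-off radius. The only cosmetic difference is that the paper extracts the factor $\lambda^{(\alpha-1)/2}\leq r^{(\alpha-1)/2}$ pointwise before Cauchy--Schwarz, whereas you interpolate $\lambda^\alpha\leq R^{\alpha-1}\lambda$ inside the second Cauchy--Schwarz factor; the resulting bounds are identical.
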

\begin{proof}
  Fix a test function $\eta_{r,R} \in C_c^\infty(\R_+,[0,1])$ such that $\eta_{r,R}(s) = 0$ for $s<r/2$ and $s>2R$, $\eta_{r,R}(s) = 1$ for $r\leq s \leq R$, $\abs{\eta'_{r,R}(s)} \leq C/r$ for $r/2\leq s < r$ as well as $\abs{\eta'_{r,R}(s)} \leq C /R$ for $R\leq s < 2R$.
  We can estimate for a fixed curve of finite action $(\nu,w)\in \CE_T$
  \begin{align*}
    \abs*{\pderiv{}{t} \int \lambda \eta(\lambda) \dx{\nu_t} } &\leq \int \lambda^\alpha \, \eta \, \abs{w_t} \dx{\nu_t} + \int \lambda^{1+\alpha} \, \abs{\eta'} \, \abs{w_t} \dx{\nu_t} \\
    &\leq \frac{C}{r^{\frac{1-\alpha}{2}}} \int_{\frac{r}{2}}^\infty \lambda^{\frac{1+\alpha}{2}} \abs{w_t} \dx{\nu_t} + \frac{C}{r^{\frac{1-\alpha}{2}}} \int_{\frac{r}{2}}^r \lambda^{\frac{1+\alpha}{2}} \abs{w_t} \dx{\nu_t} \\
    &\quad +\frac{C}{R^{\frac{1-\alpha}{2}}} \int_{R}^{2R} \lambda^{\frac{1+\alpha}{2}} \abs{w_t} \dx{\nu_t} \\
    &\leq C\bra*{\frac{1}{r^{\frac{1-\alpha}{2}}}+ \frac{1}{R^{\frac{1-\alpha}{2}}}}\  A(\nu_t,w_t)^\frac12 \ \bra*{ \int \lambda \dx{\nu_t} }^{\frac{1}{2}}.
  \end{align*}
  By an integration in time, letting $R\to \infty$ and using the assumption of finite action, we obtain for all $t\in [0,T]$ the estimate
  \begin{equation*}
     \int_{r}^\infty \lambda \dx{\nu_t} \leq \int_{\frac{r}{2}}^\infty \lambda \dx{\nu_0} + \frac{C \sqrt{T}}{r^\frac{1-\alpha}{2}} .
  \end{equation*}
  Hereby the constant $C$ only depends on the test function and the action of the curve. Hence, if we apply this estimate for $\set{\nu_t^\eps}_{\eps>0}$, we observe its tightness by the tightness assumption on $\set{\nu_0^\eps}_{\eps>0}$ and the uniform finite action of the family.
\end{proof}
\begin{proposition}[Compactness of curves of finite action]\label{prop:LSW:compactness}
  Assume $\alpha \geq 1-3\gamma$ and let $(\nu^n, w^n) \in \CE_T$ for $n\in\N$ be a family of solutions to the continuity equation with uniformly bounded action and dissipation such that $\set{\nu^n_0}_{n\in\N}$ satisfies the tightness condition~\eqref{ass:tightness}.
  Then, there exists a subsequence and a couple $(\nu, w)\in \CE_T$, such that
\begin{align}
  \nu_t^n &\stackrel{*}{\rightharpoonup} \nu_t \qquad \text{ in } C_c^0(\R_+)^* \quad \forall t\in [0,T] ,  \label{e:LSW:compact:nu} \\
  w^n \nu^n    &\stackrel{*}{\rightharpoonup}  w \nu \qquad \text{ in } C_c^0([0,T] \times \R_+)^*.\notag
\end{align}
  In addition, the action and dissipation satisfy the $\liminf$ estimates
\begin{align}\label{e:LSW:compact:Alsc}
   \int_0^T A(\nu_t, w_t) \dx{t} &\leq \liminf_{n\to \infty} \int_0^T A(\nu_t^n, w_t^n)  \dx{t} \\
  \int_0^T D(\nu_t) \dx{t} &\leq \liminf_{n\to \infty} \int_0^T D(\nu_t^n) \dx{t} \label{e:LSW:compact:Dlsc}.
\end{align}
\end{proposition}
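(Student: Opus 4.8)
The plan is the classical two–step scheme for compactness of curves in a gradient–flow setting: first extract a limit curve $t\mapsto\nu_t$ by a space–time compactness argument and identify the limiting flux, then establish the two $\liminf$ estimates through the lower semicontinuity of the action, \emph{reducing} the dissipation estimate to the action one. For the space–time compactness I would argue as follows. For fixed $t$ the measures $\set*{\nu_t^n}_n$ have uniformly bounded first moment $\int\lambda\dx\nu_t^n=\bar\varrho$ and, by Lemma~\ref{lem:LSW:tight}, are tight uniformly in $n$ and $t$, hence relatively compact in $C_c^0(\R_+)^*$. To obtain one subsequence serving all $t$, I prove equicontinuity in time: for $\zeta\in C_c^\infty(\R_+)$ the continuity equation and Cauchy--Schwarz give
\begin{equation*}
  \abs*{\int\zeta\dx\nu_t^n - \int\zeta\dx\nu_s^n} \leq \int_s^t\bra*{\int\lambda^\alpha\abs{\zeta'}^2\dx\nu_r^n}^{\frac12} A(\nu_r^n,w_r^n)^{\frac12}\dx r \leq C_\zeta\, (t-s)^{\frac12},
\end{equation*}
using that $\zeta'$ has compact support (so the first factor is controlled by the mass on $\operatorname{supp}\zeta'$) and the uniform action bound. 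A refined Arzelà--Ascoli argument over a countable dense family of test functions, with a diagonal extraction, then yields a weakly$^*$ continuous $t\mapsto\nu_t\in M$ with $\nu_t^n\stackrel{*}{\rightharpoonup}\nu_t$ for every $t\in[0,T]$, which is~\eqref{e:LSW:compact:nu}.

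Next I handle the flux and identify the continuity equation. On any compact $[0,T]\times[a,b]$ with $a>0$, Cauchy--Schwarz gives
\begin{equation*}
  \int_0^T\!\!\int_a^b\abs{w_t^n}\dx\nu_t^n\dx t \leq \bra*{\int_0^T A(\nu_t^n,w_t^n)\dx t}^{\frac12}\bra*{a^{-1-\alpha}\,T\bar\varrho}^{\frac12},
\end{equation*}
so the measures $w^n\nu^n$ have locally uniformly bounded total variation and, along a further subsequence, converge weakly$^*$ to some $\sigma$. Testing the weak continuity equation against $\zeta\in C_c^\infty([0,T]\times\R_+)$ and using that $\lambda^\alpha\partial_\lambda\zeta\in C_c^0$, I pass to the limit and find that $(\nu,\sigma)$ solves the continuity equation. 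To identify $\sigma=w\nu$ and to get~\eqref{e:LSW:compact:Alsc} at once, I use the Legendre duality
\begin{equation*}
  A(\nu,w) = \int\lambda^\alpha\abs{w}^2\dx\nu = \sup_{\phi}\set*{\int\phi\dx\sigma - \frac14\int\lambda^{-\alpha}\phi^2\dx\nu},
\end{equation*}
where $\phi$ ranges over $C_c^0$ functions supported away from $\lambda=0$, so that $\lambda^{-\alpha}\phi^2$ is bounded and continuous; by monotone approximation this supremum recovers the full action. For each fixed such $\phi$ the functional $(\nu,\sigma)\mapsto\int\phi\dx\sigma-\frac14\int\lambda^{-\alpha}\phi^2\dx\nu$ is weakly$^*$ continuous, whence the supremum is weakly$^*$ lower semicontinuous; a finite $\liminf$ forces $\sigma\ll\nu$, giving $\sigma=w\nu$ with $w\in L^2(\lambda^\alpha\nu)$, hence $(\nu,w)\in\CE_T$, and simultaneously~\eqref{e:LSW:compact:Alsc}. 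This is the standard lower semicontinuity of integral functionals of a measure and its density.

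The dissipation estimate~\eqref{e:LSW:compact:Dlsc} I would reduce to the action lower semicontinuity just proved. Writing $v_t^n:=u^n(t)-q\lambda^{-\gamma}$ for the optimal multiplier $u^n(t)$ of $\nu_t^n$ from Proposition~\ref{lem:LSW:Dissipation}, one has $v_t^n\in T^*_{\nu_t^n}M$ and $D(\nu_t^n)=A(\nu_t^n,v_t^n)$. The a priori estimate~\eqref{e:LSW:DissEst:ap1} bounds $u^n$ in $L^2((0,T))$, so up to a subsequence $u^n\rightharpoonup\bar u$ weakly in $L^2$. Since $\int\zeta\dx\nu_t^n\to\int\zeta\dx\nu_t$ pointwise in $t$ and boundedly, the product $u^n\nu^n\rightharpoonup\bar u\,\nu$ (weak $\times$ strong), while $\lambda^{-\gamma}\nu^n\rightharpoonup\lambda^{-\gamma}\nu$ tested against functions supported away from $0$; hence $v^n\nu^n\rightharpoonup(\bar u-q\lambda^{-\gamma})\nu$. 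Applying the action lower semicontinuity to the pairs $(\nu^n,v^n)$ gives
\begin{equation*}
  \liminf_{n\to\infty}\int_0^T D(\nu_t^n)\dx t = \liminf_{n\to\infty}\int_0^T A(\nu_t^n,v_t^n)\dx t \geq \int_0^T\!\!\int\lambda^\alpha\bra*{\bar u(t)-q\lambda^{-\gamma}}^2\dx\nu_t\dx t \geq \int_0^T D(\nu_t)\dx t,
\end{equation*}
the last inequality because $D(\nu_t)$ is the minimum of $\int\lambda^\alpha(u-q\lambda^{-\gamma})^2\dx\nu_t$ over $u\in\R$.

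I expect this last step to be the main obstacle. The dissipation carries the $\nu$-dependent Lagrange multiplier $u$ and the singular weights $\lambda^{\alpha-\gamma},\lambda^{\alpha-2\gamma}$, so one cannot pass to the limit directly in its explicit moment form. The reduction above sidesteps this by transferring the nonlocal structure into the cotangent vector $v^n$ and invoking the action lower semicontinuity, but it hinges on two delicate points: the $L^2$-boundedness of the multipliers $u^n$ (which requires the moment estimate and the standing assumption $\alpha\ge1-3\gamma$) and the correct identification of the weak$^*$ limit of $v^n\nu^n$, for which the behaviour near $\lambda=0$ must be controlled by restricting throughout to test functions supported away from the origin.
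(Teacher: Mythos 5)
Your proposal is correct and follows the same three-step architecture as the paper: time-equicontinuity plus Arzel\`a--Ascoli for \eqref{e:LSW:compact:nu}, extraction of a limiting flux measure and identification of its density with respect to $\nu$ for the action bound \eqref{e:LSW:compact:Alsc}, and reduction of the dissipation bound \eqref{e:LSW:compact:Dlsc} to the action bound by viewing $D(\nu^n_t)=A(\nu^n_t,u^n(t)-q\lambda^{-\gamma})$. The one genuine methodological difference is in the middle step: where you invoke the Legendre duality $A(\nu,w)=\sup_\phi\set{\int\phi\dx{\sigma}-\tfrac14\int\lambda^{-\alpha}\phi^2\dx{\nu}}$ to get absolute continuity $\sigma\ll\nu$ and lower semicontinuity in one stroke, the paper instead proves the weighted Cauchy--Schwarz bound \eqref{e:LSW:compact:Action:p1} and applies the Riesz representation theorem in $L^2(\lambda\dx{\nu_t}\dx{t})$ to produce the density $w$, recovering \eqref{e:LSW:compact:Alsc} by approximating the optimal test function. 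These are two standard faces of the same Benamou--Brenier-type lower semicontinuity; the duality route is slightly more self-contained, the Riesz route gives the representation $w\in L^2(\lambda^\alpha\dx{\nu_t}\dx{t})$ more directly. For the dissipation, the paper compresses the argument into a single sentence, whereas you supply exactly the details it leaves implicit: the uniform $L^2((0,T))$ bound on the multipliers $u^n$ from \eqref{e:LSW:DissEst:ap1}, the weak-times-strong identification $u^n\nu^n\rightharpoonup\bar u\,\nu$, and the final minimality of $u(\nu_t)$ from Proposition~\ref{lem:LSW:Dissipation}; this elaboration is sound (note only that the constants in \eqref{e:LSW:DissEst:ap1} require a uniform-in-$n$ energy bound, an implicit hypothesis shared with the paper's own proof, where it enters through the statement $E(\nu_t)<\infty$ by lower semicontinuity).
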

\begin{proof}
  For any $\zeta\in C_c^1(\R_+)$ and $0\leq t_1 < t_2 \leq T$ holds
\begin{align*}
  \abs*{\int \zeta \dx{\nu_{t_2}^n} - \int \zeta \dx{\nu_{t_1}^n}} &= \abs*{ \int_{t_1}^{t_2} \int \partial_\lambda \zeta \, \lambda^\alpha \, w_t^n \; \nu_t(\dx{\lambda}) \dx{t}}  \\
  &\leq \sup_{\lambda > 0} \frac{\abs*{\partial_\lambda \zeta}}{\lambda^{\frac{\alpha}{2}}} \abs*{t_2 - t_1}^\frac12 \bra*{\int_{t_1}^{t_2} A(\nu_t^n, w_t^n) \dx{t}}^\frac12 ,
\end{align*}
  which shows~\eqref{e:LSW:compact:nu} and the weak$^*$ continuity of $\nu_t$.
  Moreover, it holds for $\kappa \in \R$ and $\zeta \in C_c^0([0,T]\times \R_+)$
\begin{align}
  \int_0^T \int \zeta(t,\lambda) \lambda^{\kappa+\alpha} \abs*{w_t^n} \dx\nu_t^n \dx{t} \leq &\bra*{ \int_0^T A(\nu_t^n,w_t^n) \dx{t}}^\frac12 \times \notag \\
   &\bra*{ \int_0^T \int \zeta(t,\lambda)^2 \lambda^{2\kappa+\alpha} \dx\nu_t \dx{t} }^\frac12   \label{e:LSW:compact:p1} .
\end{align}
  By lower semi-continuity it follows $E(\nu_t)<\infty$ and by the tightness Lemma~\ref{lem:LSW:tight} it follows the conservation of total mass $\int \lambda \dx{\nu_t} = \int \lambda \dx{\nu_0} = \int \lambda \dx{\nu^n_0} < \infty$.
  Hence, $\nu_t \in M$ for all $t\in [0,T]$.
  Then, by interpolation, the second term in~\eqref{e:LSW:compact:p1} is finite for $1-\gamma\leq 2\kappa +\alpha \leq 1$.
  There exists $\mu \in C_c^0([0,T]\times \R_+)$ such that $w^n \nu^n \stackrel{*}{\rightharpoonup} \mu$ and the pair $(\nu,\mu)$ satisfies $\partial_t \nu_t + \partial_{\lambda} \mu_t = 0$ in $C_c^\infty([0,T]\times \R_+)^*$.
  Since $(\nu^n,w^n)$ is a curve of finite action, we find a subsequence such that
  \[
    \lim_{n\to \infty} \int_0^T A(\nu^n_t,w^n_t) \dx{t} = A^* := \liminf_{n\to \infty} \int_0^T  \int_0^T A(\nu^n_t,w^n_t) \dx{t} .
  \]
  Hence, we get the estimate with $\kappa = (1-\alpha)/2$
  \begin{equation}\label{e:LSW:compact:Action:p1}
    \int_0^T \int  \zeta(t,\lambda) \lambda^\frac{1-\alpha}{2} \mu_t(\dx\lambda) \dx{t} \leq \bra*{A^* \int_0^T \int \zeta(t,\lambda)^2 \lambda \dx{\nu_t} \dx{t}}^\frac12.
  \end{equation}
  Now, we can apply the Riesz representation theorem to find $v\in L^2(\lambda\dx{\nu_t} dt)$ such that
  \[
    \int_0^T \int \zeta(t,\lambda)  \lambda^\frac{1-\alpha}{2} \mu_t(\dx\lambda) \dx{t} = \int_0^T \int \zeta(t,\lambda) \lambda v(t,\lambda) \; \nu_t(\dx{\lambda}) \dx{t} .
  \]
  Setting $\tilde\zeta(t,\lambda) = \lambda^\frac{1-\alpha}{2} \zeta(t,\lambda)$ and $w_t(\lambda) =\lambda^{\frac{1-\alpha}{2}} v(t,\lambda)$, we get that $\mu_t(\dx\lambda) = v(t,\lambda) \nu_t(\dx\lambda)$.
  Moreover, since $w\in L^2(\lambda^\alpha \dx\nu_t \dx{t})$ it is of finite action.
  Moreover, by approximating $\zeta(t,\lambda) = \frac{w_t(\lambda)}{\lambda^{\frac{1-\alpha}{2}}}$ it follows from~\eqref{e:LSW:compact:Action:p1} the lower semi-continuity of the action~\eqref{e:LSW:compact:Alsc}.

  Finally, \eqref{e:LSW:compact:Dlsc} follows by noting that $D(\nu_t^n) = A(\nu_t^n, u(\nu_t^n) - q\lambda^{-\gamma})$, which is well-defined by~\eqref{e:LSW:covector}.
\end{proof}
The formulation of the LSW gradient flow as curves of minimal action, reads now in analog to the one of the Becker-Döring equation~\eqref{e:BD:Jintro}
\begin{proposition}[LSW equation as curves of maximal slope]\label{prop:LSW:DeGiorgi}
  Let $\alpha \geq 1-3\gamma$.
  For $(\nu,w) \in \CE_T$ with finite action holds
  \begin{align}\label{e:LSW:J}
    J(\nu) &:= E(\nu_T) - E(\nu_0) + \frac{1}{2} \int_0^T
     D(\nu_t) \dx{t}
    + \frac{1}{2} \int_0^T A(\nu_t, w_t) \dx{t} \geq 0 .
  \end{align}
  Moreover, equality holds if and only if $\nu_t$ is a solution to the LSW equation.
\end{proposition}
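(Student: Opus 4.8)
The plan is to run the de~Giorgi argument characterising curves of maximal slope, using that the dissipation $D$ is a \emph{strong upper gradient} for the energy $E$, which is precisely the content of Proposition~\ref{lem:LSW:Dissipation}. First I would dispose of the trivial case: if $\int_0^T D(\nu_t)\dx{t} = \infty$, then $J(\nu)=+\infty\geq 0$ and there is nothing to prove, so I may assume $\int_0^T D(\nu_t)\dx{t}<\infty$. Together with the finite action $\int_0^T A(\nu_t,w_t)\dx{t}<\infty$, the product $t\mapsto\sqrt{D(\nu_t)}\,\sqrt{A(\nu_t,w_t)}$ then lies in $L^1((0,T))$ by Cauchy--Schwarz, so every term in $J(\nu)$ is finite.

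For the inequality $J(\nu)\geq 0$, I would combine the strong upper gradient bound~\eqref{e:LSW:strongupper} with $s=0$, $t=T$ and the elementary Young inequality $\sqrt{D(\nu_t)}\,\sqrt{A(\nu_t,w_t)}\leq\tfrac12 D(\nu_t)+\tfrac12 A(\nu_t,w_t)$ applied pointwise in $t$:
\begin{align*}
  E(\nu_0) - E(\nu_T) &\leq \abs{E(\nu_T) - E(\nu_0)} \leq \int_0^T \sqrt{D(\nu_t)}\,\sqrt{A(\nu_t,w_t)} \dx{t} \\
  &\leq \frac12\int_0^T D(\nu_t)\dx{t} + \frac12\int_0^T A(\nu_t,w_t)\dx{t} .
\end{align*}
Rearranging this chain is exactly the assertion $J(\nu)\geq 0$.

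For the equality case I would argue that $J(\nu)=0$ forces each inequality in the displayed chain to be an equality for a.e.\ $t\in[0,T]$. Equality in Young's inequality yields $D(\nu_t)=A(\nu_t,w_t)$; equality in the strong upper gradient bound yields, by the characterisation in Proposition~\ref{lem:LSW:Dissipation}, $w_t(\lambda)=\pm\bra*{u(t)-q\lambda^{-\gamma}}$ for $\nu_t$-a.e.\ $\lambda$; and equality in $E(\nu_0)-E(\nu_T)=\abs{E(\nu_T)-E(\nu_0)}$ forces $t\mapsto E(\nu_t)$ to be non-increasing. To fix the sign, I would use the energy identity from the proof of Proposition~\ref{lem:LSW:Dissipation} (letting the cut-off $\zeta\to 1$) together with $w_t\in T^*_{\nu_t}M$, i.e.\ $\int\lambda^\alpha w_t\dx{\nu_t}=0$, to write
\[
  \pderiv{}{t} E(\nu_t) = q\int \lambda^{\alpha-\gamma} w_t \dx{\nu_t} = -\int \lambda^\alpha\bra*{u(t)-q\lambda^{-\gamma}} w_t \dx{\nu_t} .
\]
Since this must be non-positive, the sign $+$ is selected, giving $w_t=u(t)-q\lambda^{-\gamma}$ $\nu_t$-a.e.; inserting this into the continuity equation~\eqref{e:LSW:continuityEqu} yields exactly the LSW equation~\eqref{e:LSW}. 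The degenerate instants where $D(\nu_t)=0$ cause no trouble: there $u(t)-q\lambda^{-\gamma}=0$ and $w_t=0$ hold $\nu_t$-a.e., so the identification persists. Conversely, if $\nu$ solves the LSW equation then $w_t=u(t)-q\lambda^{-\gamma}$ gives $A(\nu_t,w_t)=D(\nu_t)$ and $\pderiv{}{t}E(\nu_t)=-D(\nu_t)$, so all inequalities collapse and $J(\nu)=0$.

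Given Proposition~\ref{lem:LSW:Dissipation}, the argument is essentially routine; the only point I expect to require care is the sign bookkeeping in the equality case, where the symmetric $\pm$ produced by the upper-gradient characterisation must be collapsed to the single sign corresponding to energy dissipation, and the $\nu_t$-a.e.\ (rather than pointwise-in-$\lambda$) nature of all the equalities must be tracked consistently through the continuity equation.
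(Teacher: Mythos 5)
Your proposal is correct and follows essentially the same route as the paper: nonnegativity via the strong upper gradient property of Proposition~\ref{lem:LSW:Dissipation} combined with Young's inequality, and the equality case via the characterisation of equality in~\eqref{e:LSW:strongupper} together with $w_t\in T^*_{\nu_t}M$. Your treatment is in fact slightly more explicit than the paper's on resolving the $\pm$ ambiguity through the sign of $\pderiv{}{t}E(\nu_t)$, which the paper absorbs into the identity $-\int \lambda^{\alpha} q \lambda^{-\gamma} w_t \dx{\nu_t} = \sqrt{A(\nu_t,w_t) D(\nu_t)}$.
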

\begin{proof}
  We can assume that the dissipation $\int_0^T D(\nu_t) \dx{t}$ is bounded, because else there is nothing to show.
  Then, we can use the strong upper gradient property of the dissipation~\eqref{e:LSW:strongupper} after an application of the Young inequality to arrive at
\begin{equation*}
  \pderiv{}{t} E(\nu_t) \geq -\frac{1}{2} D(\nu_t)  - \frac{1}{2} A(\nu_t,w_t) .
\end{equation*}
  An integration of the above estimate shows the nonnegativity of $J$ in~\eqref{e:LSW:J}.
  The equality case follows from the equality case in~\eqref{e:LSW:strongupper} for a.e.\ $t\in [0,T]$ by choosing $w_t(\lambda) = u(\nu_t) - ql^{-\gamma}$.
  Then, by weak$^*$ continuity of $t\mapsto \nu_t$ follows the result for all $t\in [0,T]$.

  Now, for a curve $(\nu,w)\in \CE_T$ with $J(\nu)=0$ follows by Proposition~\eqref{lem:LSW:Dissipation} and~\eqref{e:LSW:strongupper} the identity
  \begin{align*}
    -\int \lambda^{\alpha} q \lambda^{-\gamma} w_t \dx{\nu_t} = \sqrt{A(\nu_t,w_t) D(\nu_t)} = A(\nu_t,w_t) = D(\nu,w_t) .
  \end{align*}
  Since $w_t \in T^*_{\nu_t} M$, it follows that $w_t = u(t)-q\lambda^{-\gamma}$ for $\nu_t$ almost every $\lambda \in \R_+$.
  Hence, the continuity equation~\eqref{e:LSW:continuityEqu} takes the form
  \begin{equation*}
    \partial_t \nu_t + \partial_\lambda\bra*{\lambda^\alpha\bra{ u(t)-q\lambda^{-\gamma}}\nu_t} = 0  \quad\text{ in }\quad C_c^\infty([0,T]\times \R_+)^*,
  \end{equation*}
  which is nothing else than a weak solution to the LSW equation.
\end{proof}
\begin{remark}
  The compactness statement in Proposition~\ref{prop:LSW:compactness} is also a tool to proof existence of solution to the LSW equation by the particle method (cf.\ \cite{Niethammer1998,Niethammer2005b}).
  Therefore, the initial distribution is approximated in the weak$^*$ sense by a discrete sum of Dirac deltas. Solutions for such data are determined by solving the finite system of ordinary differential equations determined by~\eqref{e:LSW} for each particle. Then the compactness statement allows to pass to the limit in the particle number and existence for measure valued initial distributions is obtained.
\end{remark}
In addition, the compactness statement Proposition~\ref{prop:LSW:compactness} with the variational characterization of solutions of the LSW equation from Proposition~\ref{prop:LSW:DeGiorgi} is the essential tool to show the continuous dependence of the solution on the initial data.
\begin{corollary}[Continuous dependency on the initial data]\label{cor:LSW:continitial}
  Let $\set{\nu_0^\eps}_{\eps>0}$ be a sequence of initial data satisfying the tightness condition~\eqref{ass:tightness} and
  \begin{equation}\label{e:LSW:conv:init}
    \lim_{\eps \to 0} E(\nu_0^\eps) = E(\nu_0) < \infty .
  \end{equation}
  Then there exists a solution $\nu \in C_c^\infty([0,T]\times \R_+)^*$ to the LSW equation such that $\nu_t^\eps \stackrel{*}{\rightharpoonup} \nu_t$ in $C_c^0(\R_+)$ for all $t\in [0,T]$.
\end{corollary}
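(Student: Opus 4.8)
The plan is to realize this as a routine consequence of the variational machinery already assembled in this section: the energy--dissipation balance along solutions, propagation of tightness, compactness, and lower semicontinuity. For each $\eps>0$ I would take a solution $(\nu^\eps,w^\eps)\in\CE_T$ of the LSW equation issued from $\nu_0^\eps$ (existence, e.g.\ by the particle method discussed after Proposition~\ref{prop:LSW:compactness}). Being a solution, $\nu^\eps$ is a curve of maximal slope, so $J(\nu^\eps)=0$ by Proposition~\ref{prop:LSW:DeGiorgi}; and since $w^\eps_t=u(\nu^\eps_t)-q\lambda^{-\gamma}$ realizes $A(\nu^\eps_t,w^\eps_t)=D(\nu^\eps_t)$, the functional~\eqref{e:LSW:J} collapses to the balance $E(\nu^\eps_T)+\int_0^T D(\nu^\eps_t)\dx{t}=E(\nu^\eps_0)$ (and likewise on each $[0,t]$). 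Because $E\ge 0$ and $E(\nu^\eps_0)\to E(\nu_0)<\infty$ by~\eqref{e:LSW:conv:init}, this yields the uniform bounds $\sup_{t\in[0,T]}E(\nu^\eps_t)\le C$, $\int_0^T D(\nu^\eps_t)\dx{t}\le C$, and hence $\int_0^T A(\nu^\eps_t,w^\eps_t)\dx{t}\le C$, all independent of $\eps$.

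Next I would invoke the tightness hypothesis~\eqref{ass:tightness} on $\{\nu_0^\eps\}$ together with the uniform action bound, so that Lemma~\ref{lem:LSW:tight} propagates tightness to $\{\nu^\eps_t\}$ uniformly in $\eps$ and $t\in[0,T]$. With uniformly bounded action and dissipation and uniform tightness in hand, Proposition~\ref{prop:LSW:compactness} applies and produces, along a subsequence, a limit $(\nu,w)\in\CE_T$ with $\nu^\eps_t\stackrel{*}{\rightharpoonup}\nu_t$ in $C_c^0(\R_+)^*$ for every $t\in[0,T]$ and $w^\eps\nu^\eps\stackrel{*}{\rightharpoonup}w\nu$, together with the lower semicontinuity estimates $\int_0^T A(\nu_t,w_t)\dx{t}\le\liminf_\eps\int_0^T A(\nu^\eps_t,w^\eps_t)\dx{t}$ and $\int_0^T D(\nu_t)\dx{t}\le\liminf_\eps\int_0^T D(\nu^\eps_t)\dx{t}$. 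In particular the weak$^*$ limit at $t=0$ is the datum $\nu_0$, itself the weak$^*$ limit of $\{\nu_0^\eps\}$ guaranteed by tightness and the uniform first--moment bound, so $\nu$ is issued from the correct initial condition.

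It then remains to show that the limit $\nu$ is itself a solution. Fixing a further subsequence along which $J(\nu^\eps)\to\liminf_\eps J(\nu^\eps)=0$ and along which each of $E(\nu^\eps_T)$, $\int_0^T D(\nu^\eps_t)\dx{t}$ and $\int_0^T A(\nu^\eps_t,w^\eps_t)\dx{t}$ converges (possible by the uniform bounds), I would combine the weak$^*$ lower semicontinuity of $E$ (its integrand $\lambda^{1-\gamma}$ is nonnegative and continuous, so $E(\nu_T)\le\liminf_\eps E(\nu^\eps_T)$), the exact convergence $E(\nu_0^\eps)\to E(\nu_0)$ from~\eqref{e:LSW:conv:init}, and the two lower semicontinuity estimates above, to obtain $J(\nu)\le\liminf_\eps J(\nu^\eps)=0$. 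Since $J(\nu)\ge 0$ by Proposition~\ref{prop:LSW:DeGiorgi}, this forces $J(\nu)=0$, whence $\nu$ is a weak solution of the LSW equation, proving the claim.

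The main obstacle---and the only step requiring genuine care---is the final passage to the limit in $J$: a $\liminf$ of a sum dominates the sum of the individual $\liminf$'s only in the unhelpful direction, so one cannot argue termwise against a generic subsequence but must fix a single subsequence realizing $\liminf_\eps J(\nu^\eps)$ along which every term converges. Crucially, the term $-E(\nu_0)$ enters $J$ with a minus sign, for which lower semicontinuity would point the wrong way; it is precisely here that the assumption~\eqref{e:LSW:conv:init} of \emph{exact} convergence of the initial energies (rather than mere lower semicontinuity) is indispensable, mirroring the role of well-preparedness in the general curves--of--maximal--slope scheme of~\cite{Serfaty2011}.
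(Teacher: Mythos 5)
Your proposal is correct and follows essentially the same route as the paper: take solutions $\nu^\eps$ issued from $\nu_0^\eps$, so $J(\nu^\eps)=0$, extract a limit via Proposition~\ref{prop:LSW:compactness}, and combine the $\liminf$ estimates for the action and dissipation with weak$^*$ lower semicontinuity of $E(\nu_T)$ and the exact convergence~\eqref{e:LSW:conv:init} of the initial energies to force $J(\nu)=0$. The paper's proof is terser; your version usefully makes explicit the uniform energy/action/dissipation bounds coming from the energy--dissipation balance and the single-subsequence bookkeeping needed to pass to the limit in the sum defining $J$.
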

\begin{proof}
  By the compactness statement Proposition~\eqref{prop:LSW:compactness} follows that there exists a couple $(\nu_t, w_t)\in\CE_T$ being the weak$^*$ limit of $(\nu_t^\eps,w_t^\eps)\in\CE_T$ and satisfying the two $\liminf$ estimates~\eqref{e:LSW:compact:Alsc} and~\eqref{e:LSW:compact:Dlsc}.
  By lower semi-continuity of the energy and the assumption~\eqref{e:LSW:conv:init} follows $0=\liminf_{\eps\to 0} J(\nu^\eps) \geq J(\nu) \geq 0$ and hence $J(\nu) =0$, which proves the claim.
\end{proof}
\begin{remark}
  The above result is consistent with the existing literature: In~\cite[Theorem 2.2]{Niethammer2005b}, the continuous dependency on the initial data was shown under the tightness condition~\eqref{ass:tightness} with respect to weak$^*$ convergence for continuous test functions compactly supported on $[0,\infty)$ including~$0$, i.e.\ Borel measures on $[0,\infty)$.
  Then, it is easy to see that weak$^*$ convergence with respect to this class implies convergence of the macroscopic energy~\eqref{ass:wellprepared:initial:mac}.
\end{remark}

\section{Proof of main results}\label{S:Limit}

\subsection{A priori estimates for the Becker--Döring gradient structure}\label{s:AprioriBD}

In this section, we consider the Becker--Döring equation and its gradient structure as introduced in Section~\ref{s:Intro:BD} and~\ref{s:Intro:BD:GF}, respectively.

The reversible equilibrium distribution $\omega$ with parameter $z \in (0,z_s]$ (corresponding to the conserved quantity) is given by~\eqref{e:BD:equilibrium}. Note, that the radius of convergence for $z \mapsto \sum_{l=1}^\infty l \omega_l(z)$ is $z_s$ and $\sum_{l=1}^\infty l \omega_l =: \varrho_s < \infty$ (cf.\ Lemma~\ref{lem:BD:assymptotic_Q} below).
Hence, the equilibrium state $\omega_l := \omega_l(z_s)$ is the one with largest total mass $\varrho_s$. We work in the excess mass regime and any state will have total mass larger than $\varrho_s$ to which there doesn't exist an according equilibrium state with the same total mass.
The free energy is always the relative entropy with respect to $\omega = \omega(z_s)$, if not stated explicitly.
\begin{lemma}\label{lem:BD:assymptotic_Q}
Under Assumption~\ref{ass:BD:rates}, there exists a constant $\cF_0$ such that for any $l\geq 2$
\begin{equation}\label{e:BD:assymptotic_Q}
  Q_l = \frac{1}{l^\alpha z_s^{l-1}} \exp\bra*{\bra*{\cF_0- \frac{q}{z_s}(1-\gamma) l^{1-\gamma} + \frac{q^2}{2 z_s^2(1-2\gamma)} \bra*{l^{1-2\gamma}-1}} \bra*{1+ O(l^{-\gamma})}} ,
\end{equation}
where $\frac{l^{1-2\gamma}-1}{1-2\gamma} := \log l$ for $\gamma=\frac{1}{2}$.
\end{lemma}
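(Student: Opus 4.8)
Here is how I would establish the asymptotic expansion~\eqref{e:BD:assymptotic_Q}. The plan is to evaluate the product $Q_l = \prod_{j=1}^{l-1} a_j/b_{j+1}$ as explicitly as possible and reduce the analysis to the asymptotics of two elementary power sums. Inserting the rates from Assumption~\ref{ass:BD:rates}, the algebraic prefactors telescope,
\begin{equation*}
 \prod_{j=1}^{l-1} \frac{j^\alpha}{(j+1)^\alpha} = \frac{1^\alpha}{l^\alpha} = \frac{1}{l^\alpha} ,
\end{equation*}
which already produces the factor $l^{-\alpha}$ in front of~\eqref{e:BD:assymptotic_Q}. Factoring $z_s$ out of each fragmentation denominator and reindexing by $m = j+1$ gives
\begin{equation*}
 Q_l = \frac{1}{l^\alpha z_s^{l-1}} \prod_{m=2}^{l} \frac{1}{1 + \frac{q}{z_s}\, m^{-\gamma}} ,
\end{equation*}
so the entire problem reduces to the asymptotics of the remaining product, from which $\cF_0$, the $l^{1-\gamma}$ and the $l^{1-2\gamma}$ terms in the exponent must emerge.

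Next I would pass to logarithms and Taylor expand. Setting $x_m := \frac{q}{z_s} m^{-\gamma}$, which lies in the bounded interval $(0, \frac{q}{z_s} 2^{-\gamma}]$ for $m\geq 2$, and using $\log(1+x) = x - \tfrac12 x^2 + O(x^3)$ \emph{uniformly} on that range, the log of the product equals
\begin{equation*}
 -\sum_{m=2}^{l} \log\bra*{1 + \tfrac{q}{z_s}\, m^{-\gamma}} = -\frac{q}{z_s} \sum_{m=2}^l m^{-\gamma} + \frac{q^2}{2 z_s^2} \sum_{m=2}^l m^{-2\gamma} - \sum_{m=2}^l O\bra{m^{-3\gamma}} .
\end{equation*}
The two leading sums are treated by Euler--Maclaurin: for $s\neq 1$ one has $\sum_{m=2}^l m^{-s} = \frac{l^{1-s}-1}{1-s} + c_s + O(l^{-s})$, with the convention $\frac{l^{1-s}-1}{1-s} = \log l$ at $s = 1$ (this is precisely where the degenerate case $\gamma = \tfrac12$ enters, through $s = 2\gamma$). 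Multiplying the dominant terms $\frac{l^{1-\gamma}}{1-\gamma}$ and $\frac{l^{1-2\gamma}-1}{1-2\gamma}$ by $-q/z_s$ and $q^2/(2z_s^2)$ yields exactly the two explicit $l$-dependent contributions in the exponent of~\eqref{e:BD:assymptotic_Q}, while all $l$-independent pieces — the Euler--Maclaurin constants $c_s$, the excluded $m=1$ terms, and the convergent part of the cubic remainder when $3\gamma > 1$ — are collected into the single constant $\cF_0$.

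The only genuinely delicate step is the error bookkeeping needed to recast the residual as the multiplicative factor $(1 + O(l^{-\gamma}))$. Write $B_l$ for the bracketed expression in~\eqref{e:BD:assymptotic_Q}; since $1-\gamma > 1-2\gamma > 0$ for $\gamma\in(0,1)$, the term $-\frac{q}{z_s(1-\gamma)}l^{1-\gamma}$ dominates and $B_l \asymp l^{1-\gamma}$. The residual $R_l$ not captured by $B_l$ is built from the Euler--Maclaurin tails $O(l^{-\gamma})$ and $O(l^{-2\gamma})$ together with the $l$-dependent part of $\sum_{m\leq l} O(m^{-3\gamma})$, which is $O(l^{1-3\gamma})$ (a tail of the same order when $3\gamma>1$). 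Each of these is $O(l^{1-2\gamma})$, because $-\gamma,\, -2\gamma,\, 1-3\gamma \leq 1-2\gamma$ throughout $\gamma\in(0,1)$. Hence $R_l = O(l^{1-2\gamma}) = O(l^{-\gamma})\, B_l$, and exponentiating $\log(\text{product}) = B_l + R_l = B_l\,(1 + O(l^{-\gamma}))$ gives~\eqref{e:BD:assymptotic_Q}. I expect the main obstacle to be not any single estimate but this final verification that every discarded term is of relative size $l^{-\gamma}$ against the leading $l^{1-\gamma}$; securing the uniform cubic bound in $\log(1+x)$ and correctly accommodating the logarithmic case $\gamma = \tfrac12$ are the points that require care.
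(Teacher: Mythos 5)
Your argument is correct and follows essentially the same route as the paper's own proof in Appendix~B: telescope the product to isolate $l^{-\alpha} z_s^{-(l-1)}$, pass to logarithms, compare the resulting sums to integrals via Euler--Maclaurin, and Taylor-expand $\log(1+x)$ to second order with a cubic error, collecting all $l$-independent contributions into $\cF_0$. The only difference is organizational (you expand the logarithm before applying Euler--Maclaurin to each power sum, while the paper compares the sum of logarithms to its integral first and then expands inside the integral), and your final bookkeeping showing every residual is $O(l^{1-2\gamma}) = O(l^{-\gamma})\,B_l$ is sound, including the borderline cases $\gamma=\tfrac12$ and $\gamma=\tfrac13$ where logarithmic terms appear but are still dominated.
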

The proof relies on elementary estimates and is included for convenience in Appendix~\ref{s:assymptotic_Q}.
The expansion of the rates allows us to easily conclude the expansion of the free energy~$\cF$.
\begin{lemma}[Expansion of free energy]\label{lem:BD:expansionF}
 Let $n\in \cM$ be given such that $\cF(n)< \infty$ as defined in~\eqref{e:def:Lyapunov}, then there exists $\sigma>0$ such that for any $l_0 \geq 2$
\begin{equation}\label{e:BD:expansionF}
 \cF_{l_0}(n) = \cF_{l_0}^{\LSW}(n) \bra*{1+ O(l_0^{-\sigma}) + O(l_0^\gamma \omega_{l_0})} ,
\end{equation}
where $\cF_{l_0}$ and $\cF_{l_0}^{\LSW}$ are defined by
\begin{equation*}
 \cF_{l_0}(n) := \sum_{l=l_0}^\infty \omega_l \psi\bra*{\frac{n_l}{\omega_l}} \qquad\text{and}\qquad \cF_{l_0}^{\LSW}(n) := \frac{q}{z_s(1-\gamma)} \sum_{l=l_0}^\infty l^{1-\gamma} n_l .
\end{equation*}
\end{lemma}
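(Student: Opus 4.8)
The plan is to reduce the statement to a single pointwise identity and then to sum it over $l\geq l_0$, splitting the result into three contributions. Writing $\beta_l := -\log\omega_l$ and expanding the relative entropy density gives
\begin{equation*}
  \omega_l\,\psi\bra*{\frac{n_l}{\omega_l}} = n_l\log n_l + \beta_l\, n_l - n_l + \omega_l .
\end{equation*}
Summing over $l\geq l_0$, the term $\sum_{l\geq l_0}\beta_l n_l$ is the \emph{energy} contribution that should reproduce $\cF_{l_0}^{\LSW}(n)$, while $\sum_{l\geq l_0}\bra*{n_l\log n_l - n_l}$ is a \emph{configurational entropy} contribution and $\sum_{l\geq l_0}\omega_l$ is the \emph{normalisation} tail; the latter two are to be shown to produce only the stated relative errors $O(l_0^{-\sigma})$ and $O(l_0^\gamma\omega_{l_0})$.

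First, for the energy contribution I would invoke Lemma~\ref{lem:BD:assymptotic_Q} together with $\omega_l = z_s^l Q_l$ to obtain the expansion
\begin{equation*}
  \beta_l = \frac{q}{z_s(1-\gamma)}\, l^{1-\gamma}\,\bra*{1 + O(l^{-\sigma})} ,
\end{equation*}
where the correction is governed by the subleading $l^{1-2\gamma}$ term and the logarithmic and constant pieces, so that any $\sigma < \min\set*{\gamma, 1-\gamma}$ works. Since $n_l\geq 0$, this yields at once $\sum_{l\geq l_0}\beta_l n_l = \cF_{l_0}^{\LSW}(n)\bra*{1 + O(l_0^{-\sigma})}$, which is the leading identity. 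For the normalisation tail I would use the detailed balance ratio $\omega_{l+1}/\omega_l = z_s a_l/b_{l+1}$, which for large $l$ stays below one with a gap of order $l^{-\gamma}$; a geometric-type comparison (summation by parts against $\sum l^{-\gamma}$) then gives $\sum_{l\geq l_0}\omega_l = O\bra*{l_0^\gamma\omega_{l_0}}$, exactly the advertised second error once measured against $\cF_{l_0}^{\LSW}$ (which is bounded below in the excess-mass regime, turning this additive tail into the multiplicative factor).

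The main obstacle is the entropy term $S := \sum_{l\geq l_0}\bra*{n_l\log n_l - n_l}$, whose sign is indefinite and which is not controlled by the first moment alone. For the upper bound I would exploit $n\in\cM$: from $\sum_l l n_l = \varrho_0$ one has $n_l\leq\varrho_0/l < 1$ for all $l\geq l_0$ once $l_0>\varrho_0$, so $n_l\log n_l - n_l\leq 0$ and $S\leq 0$ costs nothing. The delicate direction is the lower bound, where the crude estimate $n_l\log n_l\geq -\beta_l n_l$ is far too lossy; here I would split $\set*{l\geq l_0}$ using a threshold parameter $\delta = \delta(l_0)\to 0$ (e.g. $\delta = l_0^{-\sigma}$). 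On the \emph{good} set $\set*{\log n_l\geq 1-\delta\beta_l}$, equivalently $a_l := n_l/\omega_l\geq e^{1+(1-\delta)\beta_l}$, the elementary convexity bound $\psi(a)\geq (1-\delta)\beta_l\, a$ (valid exactly when $\log a\geq 1 + (1-\delta)\beta_l$) gives $\omega_l\psi(a_l)\geq (1-\delta)\beta_l n_l$. On the \emph{bad} set one has $n_l < e\,\omega_l^{\delta} = e\,e^{-\delta\beta_l}$, so $\sum_{\text{bad}}\beta_l n_l \leq e\sum_{l\geq l_0}\beta_l e^{-\delta\beta_l}$ is super-exponentially small and negligible because $\beta_l\sim c\,l^{1-\gamma}\to\infty$, and there one simply discards the nonnegative term $\omega_l\psi(a_l)\geq 0$.

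Putting the pieces together, the lower bound reads $\cF_{l_0}(n)\geq (1-\delta)\sum_{l\geq l_0}\beta_l n_l - (\text{tiny})$, which with $\delta = l_0^{-\sigma}$ gives $\cF_{l_0}(n)\geq \cF_{l_0}^{\LSW}(n)\bra*{1 - O(l_0^{-\sigma})} - o(\cF_{l_0}^{\LSW})$, while the upper bound gives $\cF_{l_0}(n)\leq \cF_{l_0}^{\LSW}(n)\bra*{1 + O(l_0^{-\sigma})} + O(l_0^\gamma\omega_{l_0})$; combining the two and dividing by $\cF_{l_0}^{\LSW}(n)$ yields~\eqref{e:BD:expansionF}. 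I expect the only genuine difficulty to be this two-sided control of $S$ via the good/bad decomposition, since it is there that one must quantitatively trade the configurational entropy against the stretched-exponential decay of $\omega_l$ and the first-moment constraint; the energy and normalisation estimates are then routine consequences of Lemma~\ref{lem:BD:assymptotic_Q}.
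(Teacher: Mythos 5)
Your proof is correct and follows the same skeleton as the paper's: the identical decomposition $\omega_l\psi(n_l/\omega_l)=n_l\log(1/\omega_l)+n_l(\log n_l-1)+\omega_l$, the same use of Lemma~\ref{lem:BD:assymptotic_Q} for the energy term, and the same integral/ratio comparison giving $\sum_{l\geq l_0}\omega_l=O(l_0^\gamma\omega_{l_0})$. The one genuinely different ingredient is your treatment of the entropy term $S=\sum_{l\geq l_0}(n_l\log n_l-n_l)$. The paper bounds its negative part by the elementary inequality $\abs{\min\set{x(\log x-1),0}}\leq C_\beta x^\beta$ and then interpolates via Hölder against the $(1-\gamma)$-moment, choosing $\beta$ close to $1$ so that $\sum_{l\geq l_0}l^{-\beta(1-\gamma)/(1-\beta)}$ converges; this yields a clean relative error $O(l_0^{-\sigma})\,\cF^{\LSW}_{l_0}(n)$ plus a small additive constant. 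Your good/bad splitting at the threshold $n_l\gtrless e\,\omega_l^{\delta}$ with the convexity bound $\psi(a)\geq(1-\delta)\beta_l a$ on the good set is an equally valid alternative; it trades the Hölder interpolation for a stretched-exponential tail estimate $e\sum_{l\geq l_0}\beta_l e^{-\delta\beta_l}$. Two minor remarks: first, that tail is super-polynomially small but \emph{larger} than $\omega_{l_0}$, so it does not literally fit the error term $O(l_0^\gamma\omega_{l_0})$ of the statement — it is an additive remainder that, like the $\sum\omega_l$ term in the paper's own proof, can only be converted into a multiplicative error because $\cF^{\LSW}_{l_0}$ is of order one in the excess-mass regime (you correctly flag this). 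Second, your observation that $S\leq 0$ once $l_0>\varrho_0$ (from $n_l\leq\varrho_0/l$) is the right way to dispose of the positive part of the entropy, which the paper leaves implicit; for the finitely many $l_0\in[2,\varrho_0]$ the exceptional terms are uniformly bounded and absorbed into the constants. Neither point is a gap.
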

\begin{proof}
We expand the function $\psi$ in the definition of $\cF_{l_0}$
\begin{align*}
 \cF_{l_0}(n) = \sum_{l=l_0}^\infty \bra*{ n_l \log \frac{1}{z_s^l Q_l} + n_l\bra*{\log n_l - 1 } + \omega_l}
\end{align*}
We estimate the first sum using the asymptotic expansion~\eqref{e:BD:assymptotic_Q}
\begin{align*}
  \sum_{l=l_0}^\infty \bra*{ n_l \log \frac{1}{z_s^l Q_l} } &= \frac{q}{z_s(1-\gamma)} \sum_{l=l_0}^\infty l^{1-\gamma} n_l + O\bra*{\sum_{l=l_0}^\infty l^{1-2\gamma} n_l} \\
  &\quad +\sum_{l=l_0}^\infty l^{1-\gamma} n_l \frac{\log l^\alpha}{l^{1-\gamma}} - \sum_{l=l_0}^\infty \frac{l^{1-\gamma}}{l^{1-\gamma}}n_l \bra*{\log z_s + \cF_0\bra*{1+O(l^{-\gamma})}}\\
  &= \bra*{\frac{q}{z_s(1-\gamma)} \sum_{l=l_0}^\infty l^{1-\gamma} n_l} \bra*{1+ O\bra[\big]{l_0^{-\gamma} \log l_0} +O\bra[\big]{l_0^{-(1-\gamma)}}} ,
\end{align*}
Likewise, we note that for any $\beta \in (0,1)$ exists $C_\beta > 0 $ such that for $x>0$
\begin{equation*}
  \abs*{\min\set*{x (\log x -1),0}} \leq C_\beta x^\beta
\end{equation*}
and with the Hölder inequality, we can estimate
\begin{align*}
 \sum_{l=l_0}^\infty \abs*{\min\set*{ n_l\bra*{\log n_l -1},0}} &\leq C_\beta \sum_{l=l_0}^\infty n_l^\beta \leq C_\beta \bra*{ \sum_{l=l_0}^\infty l^{1-\gamma} n_l}^\beta \bra*{\sum_{l=l_0}^\infty \frac{1}{l^{\frac{\beta}{1-\beta} (1-\gamma)}}}^{1-\beta} .
\end{align*}
Now, we can choose $\beta$ such that $\frac{\beta}{1-\beta} (1-\gamma)= \kappa > 1 $ and $\beta<1$ leading to the estimate
\begin{align*}
 \sum_{l=l_0}^\infty \abs*{\min\set*{ n_l\bra*{\log n_l -1},0}} &\leq C_\beta \;\bra*{ \beta \sum_{l=l_0}^\infty l^{1-\gamma} n_l + 1-\beta}\; O(l_0^{-(\kappa-1)(1-\beta)}).
\end{align*}
The last term evaluates with the help of~\eqref{e:BD:assymptotic_Q} to
\begin{align*}
 \sum_{l=l_0}^\infty \omega_l &\leq \sum_{l=l_0}^\infty \frac{z_s}{l^\alpha} \exp\bra*{\bra*{\cF_0 - \frac{q}{z_s(1-\gamma)} l^{1-\gamma}}\bra*{1+O(l^{-\gamma})}}\\
 &\leq C \int_{l_0}^\infty \exp\bra*{-  \frac{q}{z_s(1-\gamma)} l^{1-\gamma}} \dx{l} \leq C \, l_0^\gamma \exp\bra*{-  \frac{q}{z_s(1-\gamma)} l_0^{1-\gamma}}.
\end{align*}
Therefore, a combination of all the estimates leads to the result.
\end{proof}
Moreover, we need a Czisar-Pinsker inequality for the free energy, which was already a crucial ingredient in~\cite{Niethammer2003}
\begin{proposition}[{Czisar-Pinsker inequality~\cite[Lemma 2.1, 2.2]{Niethammer2003}}]\label{lem:Apriori}
 For $n\in \cM$ and any small $\eta > 0$ and any $p< \infty$ and any $l_0\geq 2$ holds
\begin{align}
 \sum_{l=1}^\infty l^{1-\gamma} \abs*{n_l - \omega_l} &\leq C \sqrt{\cF(n)} \label{e:Apriori1}\\
 \abs*{\sum_{l=l_0}^\infty l n_l - \bra*{\varrho-\varrho_s}} &\leq C l_0^\gamma \sqrt{\cF(n)} + C_{p} l_0^{-p} . \label{e:Apriori3}
\end{align}
\end{proposition}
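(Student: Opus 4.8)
The plan is to establish the weighted Csiszár--Kullback--Pinsker bound~\eqref{e:Apriori1} directly and then to deduce the tail estimate~\eqref{e:Apriori3} from it using mass conservation. The pointwise engine is the elementary convexity inequality
\begin{equation*}
  \psi(s) \geq \frac{3(s-1)^2}{2(s+2)} , \qquad s\geq 0 ,
\end{equation*}
which, after inserting $s=s_l:=n_l/\omega_l$ and multiplying by $\omega_l$, reads $3(n_l-\omega_l)^2 \leq 2\bra*{n_l+2\omega_l}\,\omega_l\psi(s_l)$. I would split the index set into a \emph{comparable} part $\set*{n_l\leq e^2\omega_l}$ and a \emph{far} part $\set*{n_l>e^2\omega_l}$, and I would repeatedly exploit two consequences of Lemma~\ref{lem:BD:assymptotic_Q}: every weighted moment $\sum_l l^m\omega_l$ is finite because $\omega_l=z_s^lQ_l$ decays like $\exp\bra*{-c_0 l^{1-\gamma}}$, and $-\log\omega_l\geq c_0 l^{1-\gamma}$ for all large $l$ and some $c_0>0$.

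On the comparable part one has $n_l+2\omega_l\leq (e^2+2)\omega_l$, so the pointwise bound gives $\abs*{n_l-\omega_l}\leq C\sqrt{\omega_l}\,\sqrt{\omega_l\psi(s_l)}$; the gain $\sqrt{\omega_l}$ is exactly what renders the weight summable, and Cauchy--Schwarz yields
\begin{equation*}
  \sum_{n_l\leq e^2\omega_l} l^{1-\gamma}\abs*{n_l-\omega_l} \leq C\bra*{\sum_l l^{2(1-\gamma)}\omega_l}^{1/2}\bra*{\sum_l \omega_l\psi(s_l)}^{1/2} \leq C\sqrt{\cF(n)} .
\end{equation*}
On the far part $\abs*{n_l-\omega_l}\leq n_l$ is no longer comparable to $\omega_l$ and only the first moment $\sum_l l n_l=\varrho_0$ is at hand, so this is where the difficulty sits. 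I would split it once more at the $l$-dependent threshold $n_l\gtrless\exp\bra*{-\tfrac{c_0}{2}l^{1-\gamma}}$. For the \emph{heavy} clusters $\log s_l=\log n_l-\log\omega_l\geq \tfrac{c_0}{2}l^{1-\gamma}$, hence $\omega_l\psi(s_l)\geq \tfrac12 n_l\log s_l\gtrsim l^{1-\gamma}n_l$, giving $\sum l^{1-\gamma}n_l\leq C\cF(n)$; combining this energy bound with the moment bound $\sum l^{1-\gamma}n_l\leq\sum l n_l=\varrho_0$ and taking the geometric mean converts $\cF(n)$ into $\sqrt{\cF(n)}$. For the \emph{light} clusters $l^{1-\gamma}n_l$ is itself controlled by the summable weight $l^{1-\gamma}\exp\bra*{-\tfrac{c_0}{2}l^{1-\gamma}}$, and the same geometric-mean step against $\sum_l\omega_l\psi(s_l)\leq\cF(n)$ again produces $C\sqrt{\cF(n)}$. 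The finitely many small indices, for which the asymptotics of Lemma~\ref{lem:BD:assymptotic_Q} are not yet effective, are absorbed into a comparable-type Cauchy--Schwarz estimate, since there $n_l\leq\varrho_0$ is bounded.

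I expect the far part, and specifically the heavy/light dichotomy, to be the crux. Reconciling the polynomial weight $l^{1-\gamma}$ with the single available moment is impossible by soft interpolation alone: a naive Hölder estimate $\sum l^{1-\gamma}n_l\leq\bra*{\sum l n_l}^{1-\gamma}\bra*{\sum n_l}^{\gamma}$ only yields a bound of order $\cF(n)^{\gamma}$, which is useless as $\cF(n)\to0$ when $\gamma<\tfrac12$. It is precisely the stretched-exponential decay of the equilibrium together with the super-linear growth $\psi(s)\sim s\log s$ that supplies the missing factor $l^{1-\gamma}$, and getting this balance right for all admissible $\gamma$ is the delicate point.

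Finally, \eqref{e:Apriori3} follows cleanly. Writing $\bar\varrho=\sum_{l\geq1}l\bra*{n_l-\omega_l}$ from the two mass constraints $\sum_l l n_l=\varrho_0$ and $\sum_l l\omega_l=\varrho_s$ gives the exact identity
\begin{equation*}
  \sum_{l\geq l_0} l n_l-\bar\varrho = \sum_{l\geq l_0} l\omega_l - \sum_{l< l_0} l\bra*{n_l-\omega_l} ,
\end{equation*}
so that $\abs*{\sum_{l\geq l_0} l n_l-\bar\varrho}\leq \sum_{l\geq l_0} l\omega_l+\sum_{l<l_0} l\abs*{n_l-\omega_l}$. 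The tail $\sum_{l\geq l_0} l\omega_l$ decays like $\exp\bra*{-c_0 l_0^{1-\gamma}}$ by Lemma~\ref{lem:BD:assymptotic_Q} and is therefore bounded by $C_p l_0^{-p}$ for every $p<\infty$, while $l\leq l_0^{\gamma}l^{1-\gamma}$ for $l<l_0$ turns the second sum into $l_0^{\gamma}\sum_l l^{1-\gamma}\abs*{n_l-\omega_l}\leq C l_0^{\gamma}\sqrt{\cF(n)}$ by~\eqref{e:Apriori1}. Adding the two contributions gives~\eqref{e:Apriori3}.
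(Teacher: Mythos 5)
The paper does not prove this proposition itself; it imports it verbatim from \cite[Lemmas 2.1, 2.2]{Niethammer2003}, so there is no in-paper argument to compare against. Your proposal is correct and follows the same standard route as the cited reference: the pointwise bound $\psi(s)\geq \tfrac{3(s-1)^2}{2(s+2)}$ (valid since $(s+2)^3\geq 27s$ by AM--GM), a split according to whether $n_l$ is comparable to $\omega_l$, superlinearity of $\psi$ together with the stretched-exponential decay $-\log\omega_l\gtrsim l^{1-\gamma}$ on the far set, and the $\min(a,b)\leq\sqrt{ab}$ interpolation against the conserved first moment to convert $\cF(n)$ into $\sqrt{\cF(n)}$; the deduction of~\eqref{e:Apriori3} from~\eqref{e:Apriori1} via the exact mass identity is also right. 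The only place to be slightly more careful is the light far clusters, where the \emph{termwise} bound $l^{1-\gamma}n_l\leq\bigl(l^{2(1-\gamma)}e^{-\frac{c_0}{2}l^{1-\gamma}}\bigr)^{1/2}\bigl(\omega_l\psi(s_l)\bigr)^{1/2}$ followed by Cauchy--Schwarz is what you need (a geometric mean of the two full sums would involve $\sum_l l^{1-\gamma}\omega_l\psi(s_l)$, which is not directly $\cF(n)$), but your displayed estimates already contain exactly this.
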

For the next Lemmata, we make statements on curves of finite action to deduce certain compactness, which we later need for passing to the limit.
These Lemmata are the analog of~\cite[Lemma 2.3 and 2.4]{Niethammer2003}, but we proof them for curves of finite action instead of solutions to the Becker--Döring equation.
\begin{lemma}[A priori estimates for curves of finite action]
  Let $(n,\phi)\in \cCE_T$ be a curve of finite action as in Definition~\ref{def:BD:CurvesFiniteAction} and $\eta\in L^2(0,T)$, then it holds
\begin{align}\label{e:L1ActionEst}
  \int_0^T \eta(t) \sum_{l=l_0}^\infty l^\kappa k^l \hat n^\omega_l(t) \abs*{\nabla_l \phi(t)} \dx{t} \leq C\; &\bra[\bigg]{\sup_{t\in [0,T]} \cF^{\LSW}_{l_0}(n(t))}^{\frac{1-\alpha-2\kappa}{2\gamma}} \\
   &\times \int_0^T \abs*{\eta(t)} \; \sqrt{\cA_{\mac}(n(t),\phi(t))} \dx{t},\notag
\end{align}
for any $\kappa \in \pra*{\frac{1-\alpha-\gamma}{2},\frac{1-\alpha}{2}}$ with $\cA_{\mac}$ the action as defined in \eqref{e:BD:action} restricted to $l\geq l_0$.
Hereby, $\nabla_l \phi := \phi_{l+1} - \phi_l - \phi_1$ and  $\hat n_l^\omega$ is defined in~\eqref{e:def:hatn}. Moreover, it also holds the estimate
\begin{align}\label{e:L1FluxEst}
  \int_0^T \eta(t) \sum_{l=l_0}^\infty l^\kappa \abs*{ a_l n_1(t) n_l(t) - b_{l+1} n_{l+1}(t)} \leq C\; &\bra[\bigg]{\sup_{t\in [0,T]} \cF^{\LSW}_{l_0}(n(t))}^{\frac{1-\alpha-2\kappa}{2\gamma}} \\
   &\times \int_0^T \abs*{\eta(t)} \; \sqrt{\cD_{\mac}(n(t))} \dx{t} ,\notag
\end{align}
where again $\cD_{\mac}$ is defined as in~\eqref{e:BD:Dissipation} restricted to $l\geq l_0$.
\end{lemma}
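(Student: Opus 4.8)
The plan is to prove both estimates by the same two-step scheme. For each fixed $t$ I would first apply the Cauchy--Schwarz inequality in the summation index $l$ to split off a factor $\sqrt{\cA_{\mac}}$ (respectively $\sqrt{\cD_{\mac}}$), and then bound the remaining factor $\bra*{\sum_{l\geq l_0} l^{2\kappa} k^l \hat n_l^\omega}^{1/2}$ by the prescribed power of the energy; the integration in time against $\eta$ is then immediate. The common core of both inequalities is thus the single estimate
\[
  \sum_{l\geq l_0} l^{2\kappa} k^l \hat n_l^\omega \leq C \bra[\big]{\cF^{\LSW}_{l_0}(n)}^{\frac{1-\alpha-2\kappa}{\gamma}} ,
\]
whose square root supplies the exponent $\frac{1-\alpha-2\kappa}{2\gamma}$ appearing in the statement.

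For~\eqref{e:L1ActionEst} I would write, for fixed $t$, the factorization $l^\kappa k^l \hat n_l^\omega \abs*{\nabla_l \phi} = \bra*{l^\kappa (k^l \hat n_l^\omega)^{1/2}}\bra*{(k^l \hat n_l^\omega)^{1/2} \abs*{\nabla_l \phi}}$, so that Cauchy--Schwarz in $l$ produces exactly $\bra*{\sum_{l\geq l_0} l^{2\kappa}k^l \hat n_l^\omega}^{1/2}\sqrt{\cA_{\mac}(n,\phi)}$. For~\eqref{e:L1FluxEst} I would first invoke the logarithmic-mean identity behind~\eqref{e:def:hatn}: setting $a = \frac{n_1 n_l}{\omega_1 \omega_l}$ and $b = \frac{n_{l+1}}{\omega_{l+1}}$, the detailed balance~\eqref{e:BD:DBC} gives $a_l n_1 n_l - b_{l+1} n_{l+1} = k^l (a-b) = k^l \hat n_l^\omega (\log a - \log b)$, while the same substitution in~\eqref{e:BD:Dissipation} shows $\cD_{\mac}(n) = \sum_{l\geq l_0} k^l \hat n_l^\omega \abs*{\log a - \log b}^2$. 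Hence the identical Cauchy--Schwarz split, now with $\abs*{\log a - \log b}$ in place of $\abs*{\nabla_l \phi}$, yields $\bra*{\sum_{l\geq l_0} l^{2\kappa}k^l \hat n_l^\omega}^{1/2}\sqrt{\cD_{\mac}(n)}$. In both cases only the common factor remains to be controlled.

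To establish the core estimate I would bound the logarithmic mean by the arithmetic mean, $\Lambda(a,b)\leq \tfrac12(a+b)$, which via~\eqref{e:BD:DBC} gives $k^l \hat n_l^\omega \leq \tfrac12\bra*{a_l n_1 n_l + b_{l+1}n_{l+1}}$. Using $a_l = l^\alpha$, the bound $b_{l+1}\leq C (l+1)^\alpha$ from Assumption~\ref{ass:BD:rates}, the mass bound $n_1 \leq \varrho_0$, and a harmless reindexing of the fragmentation term, this is dominated by $C \sum_{l\geq l_0} l^{2\kappa+\alpha} n_l$. Since $\kappa \in \pra*{\frac{1-\alpha-\gamma}{2},\frac{1-\alpha}{2}}$, the exponent $2\kappa+\alpha$ lies in $[1-\gamma,1]$, so I can interpolate between the energy moment $\sum_{l\geq l_0} l^{1-\gamma} n_l = \frac{z_s(1-\gamma)}{q}\cF^{\LSW}_{l_0}(n)$ and the conserved mass $\sum_{l\geq l_0} l \, n_l \leq \varrho_0$: writing $l^{2\kappa+\alpha} = (l^{1-\gamma})^{1-\theta}\,l^{\theta}$ with $\theta = \frac{2\kappa+\alpha-(1-\gamma)}{\gamma}\in[0,1]$, Hölder's inequality gives $\sum_{l\geq l_0} l^{2\kappa+\alpha}n_l \leq C \bra*{\cF^{\LSW}_{l_0}(n)}^{1-\theta}$, and $1-\theta = \frac{1-\alpha-2\kappa}{\gamma}$ is precisely the needed exponent. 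Taking the square root, bounding the energy factor by $\sup_{t\in[0,T]}\cF^{\LSW}_{l_0}(n(t))$, multiplying by $\eta(t)$ (with $\eta\leq\abs*{\eta}$ since the inner sums are nonnegative), and integrating in time then yields both~\eqref{e:L1ActionEst} and~\eqref{e:L1FluxEst}.

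The two Cauchy--Schwarz splits and the final time integration are routine; the step I expect to require the most care is the interpolation producing the fractional exponent, in particular checking that the admissible range of $\kappa$ is exactly what guarantees $\theta\in[0,1]$, and tracking the dependence of $C$ on $\varrho_0$, $z_s$ and $q$ through Assumption~\ref{ass:BD:rates}. The use of the conserved total mass as the second interpolation endpoint is the one place where the restriction to the state manifold $\cM$ and the excess-mass regime genuinely enters.
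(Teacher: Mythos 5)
Your proposal is correct and follows essentially the same route as the paper: Cauchy--Schwarz in $l$ to split off $\sqrt{\cA_{\mac}}$ (resp.\ $\sqrt{\cD_{\mac}}$, which the paper realizes by choosing $\phi^*=D\cF(n)$ — the same identity you derive from the logarithmic mean), then a bound of $\sum_{l\geq l_0} l^{2\kappa}k^l\hat n_l^\omega$ by $C\sum_{l\geq l_0} l^{2\kappa+\alpha}n_l$ using the rate asymptotics, and finally the Hölder interpolation between the first moment and the $(1-\gamma)$-moment with exactly your exponent $\theta$. The only cosmetic difference is that the paper controls $n_1$ via $\abs{n_1-z_s}\leq C\sqrt{\cF(n)}$ from Proposition~\ref{lem:Apriori} rather than via $n_1\leq\varrho_0$; both give the needed constant.
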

\begin{proof}
  We estimate using the Cauchy--Schwarz inequality
\begin{equation*}
 \sum_{l=l_0}^\infty l^\kappa k^l \hat n^\omega_l \abs*{\nabla_l\phi(t)} \leq \bra*{\sum_{l=l_0}^\infty k^l \hat n^\omega_l \abs*{\nabla_l\phi(t)}^2}^\frac12 \bra*{\sum_{l=l_0}^\infty l^{2\kappa}  k^l \hat n^\omega_l}^\frac{1}{2} .
\end{equation*}
 Now, using that fact that
 \[
   k^l \hat n^\omega_l = \Lambda\bra*{a_l n_1 n_l, b_{l+1} n_{l+1}} =\Lambda\bra*{  \lambda^{\alpha}  n_1 n_l ,  (\lambda+1)^{\alpha} (z_s+q (l+1)^{-\gamma}) n_{l+1}},
 \]
 the estimate $\frac{(\lambda+1)^\alpha}{\lambda^{\alpha}} \leq 1+ \frac{\alpha}{\lambda}$ and from~\eqref{e:Apriori1} the bound $\abs*{n_1 - z_s} \leq C \sqrt{\cF(n)}$, it follows
 \begin{equation*}
  \sum_{l=l_0}^\infty  l^{2\kappa}  k^l \hat n^\omega_l \leq 2\bra*{z_s + \max\set*{C\sqrt{\cF(n)}, q l_0^{-\gamma}}} \sum_{l=l_0}^\infty l^{\alpha+2\kappa} n_l .
 \end{equation*}
 Now, we use the Hölder inequality to interpolate
 \begin{equation*}
   \sum_{l=l_0}^\infty l^{\alpha+2\kappa} n_l \leq \bra*{\sum_{l=l_0}^\infty l n_l}^{\frac{\alpha + 2\kappa+\gamma-1}{\gamma}} \bra*{\sum_{l=l_0}^\infty l^{1-\gamma} n_l}^{\frac{1-\alpha-2\kappa}{\gamma}} \leq C\; \bra*{\cF^{\LSW}_{l_0}(n(t))}^{\frac{1-\alpha-2\kappa}{\gamma}}
 \end{equation*}
 by assuming $1-\alpha-\gamma\leq 2\kappa \leq 1-\alpha$.
 The estimate~\eqref{e:L1FluxEst} follows from~\eqref{e:L1ActionEst} by noting that with the choice $\phi^*(t) = D \cF(n(t)) = \bra*{\log \frac{n_l(t)}{\omega_l}}_{l\geq 1}$ holds
 \[
    k^l \hat n^\omega_l(t) \abs*{\nabla_l \phi^*(t)} = a_l n_1(t) n_l(t) - b_{l+1} n_{l+1}(t)
 \]
 and $\cA_{\mac}(n(t),\phi^*(t)) = \cD_{\mac}(n(t))$.
\end{proof}
The last a priori estimate deals with tightness and how tightness is preserved for curves of finite action.
\begin{lemma}[Tightness is preserved for curves of finite action]
  A family $N \subset \cM$ is called tight provided that
  \begin{equation}\label{ass:tight-initial}
    \sup_{n\in N} \sum_{l = R}^\infty l n_l \to 0
     \qquad \text{as } R\to \infty .
  \end{equation}
  If the family $N_0 \subset \cM$ satisfy the tightness condition~\eqref{ass:tight-initial}. Then for any $T>0$ and any family of curves $\set{ (n,\phi) \in \cCE_T: n(0) \in N_0}$ of uniformly finite action the family $\set{ n(t)}$ also satisfies the tightness condition~\eqref{ass:tight-initial} for $t\in [0,T]$.
\end{lemma}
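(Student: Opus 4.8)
\emph{The plan} is to transport the weight argument behind Lemma~\ref{lem:LSW:tight} to the discrete setting. Fix $0<r<R<\infty$ and a cut-off sequence $\eta=\eta_{r,R}\colon\N\to[0,1]$ with $\eta(l)=0$ for $l<r/2$ and $l>2R$, $\eta(l)=1$ for $r\leq l\leq R$, and increments $\abs{\eta(l+1)-\eta(l)}\leq C/r$ on $[r/2,r]$ and $\leq C/R$ on $[R,2R]$. Because $R<\infty$, the profile $\theta_l:=l\,\eta(l)$ has finite support, so it is an admissible weight for the weak continuity equation~\eqref{e:cCE}. Summing~\eqref{e:cCE} against $\theta_l$, using the definition of the Onsager matrix~\eqref{e:BD:Onsager} together with the identity $\theta_1+\theta_j-\theta_{j+1}=-\nabla_j\theta$ (where $\nabla_j\theta:=\theta_{j+1}-\theta_j-\theta_1$), and invoking the absolute continuity of $n(\cdot)$, I obtain for a.e.\ $t\in[0,T]$
\begin{equation*}
 \pderiv{}{t}\sum_{l}\theta_l\,n_l(t)=\sum_{j}k^j\,\hat n^\omega_j(t)\,\nabla_j\phi(t)\,\nabla_j\theta .
\end{equation*}

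Next I would exploit the discrete product rule $\nabla_j\theta=\eta(j)+(j+1)\bra*{\eta(j+1)-\eta(j)}$, valid once $r>2$ so that $\theta_1=0$. This splits the flux sum into the analogues of the two LSW contributions $\int\lambda^\alpha\eta\,\abs{w}\dx\nu$ and $\int\lambda^{1+\alpha}\abs{\eta'}\,\abs{w}\dx\nu$. On each piece I apply the Cauchy--Schwarz inequality bond by bond, extracting $\bra*{\sum_j k^j\hat n^\omega_j\abs{\nabla_j\phi}^2}^{1/2}\leq\sqrt{\cA(n(t),\phi(t))}$ and controlling the remaining factor by the $\kappa=0$ bound from the proof of~\eqref{e:L1ActionEst}, namely $\sum_{j\geq l_0}k^j\hat n^\omega_j\leq C\sum_{j\geq l_0}j^\alpha n_j$, combined with the interpolation $\sum_{j\geq l_0}j^\alpha n_j\leq l_0^{\alpha-1}\sum_{j\geq l_0}j\,n_j\leq l_0^{\alpha-1}\varrho_0$, which uses $\alpha<1$ and the conserved mass. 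The term carrying $\eta(j)$ lives on $j\geq r/2$, while the two increment terms carry the bounded factors $(j+1)/r\leq C$ and $(j+1)/R\leq C$ on $[r/2,r]$ and $[R,2R]$; hence
\begin{equation*}
 \abs*{\pderiv{}{t}\sum_{l}\theta_l\,n_l(t)}\leq C\bra*{r^{-\frac{1-\alpha}{2}}+R^{-\frac{1-\alpha}{2}}}\sqrt{\cA(n(t),\phi(t))} ,
\end{equation*}
with $C$ uniform over the family thanks to the uniform finite action and $\sup_{t}\cF(n(t))<\infty$.

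Finally I would integrate in time, using Cauchy--Schwarz in $t$ to turn $\int_0^t\sqrt{\cA}\dx{s}$ into $\sqrt{T}\bra*{\int_0^T\cA\dx{s}}^{1/2}$, which is uniformly bounded. Letting $R\to\infty$ removes the $R^{-(1-\alpha)/2}$ term and sends $\theta_l$ up to $l\,\tilde\eta_r(l)$ with $\tilde\eta_r(l)=1$ for $l\geq r$ and $\tilde\eta_r$ supported on $l\geq r/2$; bounding $\tilde\eta_r\leq 1$ at $t=0$ gives
\begin{equation*}
 \sum_{l\geq r}l\,n_l(t)\leq\sum_{l\geq r/2}l\,n_l(0)+C\,r^{-\frac{1-\alpha}{2}}\sqrt{T}\,\bra*{\int_0^T\cA(n(s),\phi(s))\dx{s}}^{1/2},\qquad t\in[0,T].
\end{equation*}
The first term vanishes as $r\to\infty$ uniformly over $n(0)\in N_0$ by the tightness~\eqref{ass:tight-initial} of $N_0$, and the second vanishes uniformly over the family by the uniform action bound; this is precisely~\eqref{ass:tight-initial} for $\set{n(t)}$, uniformly in $t\in[0,T]$.

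\emph{The main obstacle} is the discrete analogue of the LSW weight trick: where $\eta\equiv1$ the gradient $\nabla_j\theta$ is of order one and yields no smallness, so the decay must come entirely from the weight, i.e.\ from the fact that this region sits at $j\geq r/2$ and that $\sum_{j\geq r/2}k^j\hat n^\omega_j\leq C\,r^{\alpha-1}$ precisely because $\alpha<1$. Producing the sharp rate $r^{-(1-\alpha)/2}$ with a constant uniform across the whole family — which needs a uniform energy bound, obtainable from $\cJ\geq 0$ together with the uniformly bounded action and dissipation — is the delicate point; the remainder is the bond-wise Cauchy--Schwarz bookkeeping.
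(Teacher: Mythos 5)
Your proof is correct and follows essentially the same route as the paper's: the same cut-off weight $l\,\eta_{r,R}(l)$, the discrete product rule for $\nabla_j\theta$, a bond-wise Cauchy--Schwarz against the action, and the moment bound $\sum_{j\geq r/2}k^j\hat n^\omega_j\leq C r^{\alpha-1}$ yielding the rate $r^{-(1-\alpha)/2}$ after integrating in time. The only (inessential) difference is that the paper routes the flux estimate through the already-proven bound~\eqref{e:L1ActionEst} with $\kappa=\tfrac{1-\alpha}{2}$, whereas you redo that Cauchy--Schwarz inline.
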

\begin{proof}
  The proof is similar to Lemma~\ref{lem:LSW:tight}, where the same result is proven for the LSW gradient structure.
  Let $1\ll M_1 \ll M_2$ and let $\eta \in C^1(\R)$ be a cut off function such that $\eta(l)=0$ for $l\leq \frac{M_1}{2}$ and $l\geq 2M_2$, $\eta(l) = 1$ for $M_1 \leq l \leq M_2$ and such that $\eta'(l)\leq \frac{C}{M_1}$ for $\frac{M_1}{2} \leq l \leq M_1$ and $\abs*{\eta'(l)}\leq \frac{C}{M_2}$ for $M_2\leq l \leq 2M_2$.
  Moreover, we define $\cN_l := l \eta(l)$ and assume $M_1>2$ such that $\eta(1)=0$.
  Then, it follows for any curve of finite action $(n,\phi)\in \Phi$
  \begin{align*}
    \pderiv{}{t} \cN \cdot n(t) &=  \cN \cdot \partial_t n(t) = \cN \cdot \cK(n) \phi \\
    &=\bra*{\sum_{l=1}^\infty \eta(l+1) \, k^l \, \widehat{n}^\omega_l(t)\; \nabla_l \phi(t) + \sum_{l=1}^\infty k^l \, \widehat{n}^\omega_l(t) \, l \, \nabla_l \eta \; \nabla_l \phi(t)} \\
    &\leq C \; \Biggl( \frac{1}{M_1^{\frac{1-\alpha}{2}}} \sum_{l=M_1/2}^\infty l^{\frac{1-\alpha}{2}}  k^l \widehat{n}^\omega_l(t) \abs*{\nabla_l \phi(t)} \\
    &\qquad + \bra*{\frac{1}{M_1} M_1^{1-\frac{1-\alpha}{2}} + \frac{1}{M_2} M_2^{1-\frac{1-\alpha}{2}}} \sum_{l=M_1/2}^\infty  l^{\frac{1-\alpha}{2}}  k^l \widehat{n}^\omega_l(t) \abs*{\nabla_l \phi(t)} \Biggr) \\
    &\leq C \; \bra*{ \frac{1}{M_1^{\frac{1-\alpha}{2}}} + \frac{1}{M_2^{\frac{1-\alpha}{2}}} }  \sum_{l=M_1/2}^\infty l^{\frac{1-\alpha}{2}}  k^l \widehat{n}^\omega_l(t) \abs*{\nabla_l \phi(t)} ,
\end{align*}
where $C$ is the constant depending only on the cut off function $\eta$.
Integrating over time and using~\eqref{e:L1ActionEst} leads to
\begin{align*}
  \sum_{l=M_1}^{M_2} l n_l(t) \leq \sum_{l=M_1/2}^{2M_2} l n_l(0) + C \bra*{\frac{1}{M_1^{\frac{1-\alpha}{2}}} + \frac{1}{M_2^{\frac{1-\alpha}{2}}}} \int_0^t \sqrt{\cA(n(t),\phi(t))} \dx{t} .
\end{align*}
Now, using the fact that $t\mapsto n(t)$ is a curve of finite action and letting $M_2\to \infty$, we obtain
\begin{equation*}
 \sum_{l=M_1}^{\infty} l n_l(t) \leq \sum_{l=M_1/2}^\infty l n_l(0) + \frac{C t^{\frac{1}{2}}}{M_1^{\frac{1-\alpha}{2}}} ,
\end{equation*}
where the constant $C$ is uniform for the family. This finishes the proof since $N_0$ satisfies the tightness condition~\eqref{ass:tight-initial}.
\end{proof}

\subsection{Passage to the limit: Proof of Theorem~\ref{thm:Scale:conv:CfA}}\label{S:Limit:sub}

To pass to the limit in the discrete continuity equation, we define the flux density measure for a fixed covector~$\phi$ and rescaled one~$w^\eps(\eps l) = z_s \eps^{-\gamma} \nabla_l \phi$ (cf.\ \eqref{def:Scale:vectorfield}) by
\begin{align}\label{def:Scale:fluxdensity}
  \mu^\eps(\dx\lambda) &:= \frac{z_s}{\eps^{1-\alpha+2\gamma}} \sum_{l\geq l_0} \delta_{\eps l}(\dx\lambda) \, k^l \,  \widehat{n^\eps}^\omega_l \, \nabla_l \phi \\
  &= \frac{1}{\eps^{1-\alpha+\gamma}} \sum_{l\geq l_0} \delta_{\eps l}(\dx\lambda) \, l^\alpha \,\Lambda\bra*{n_1^\eps n_l^\eps , (z_s + q(l+1)^{-\gamma}) n_{l+1}^\eps} \, w^\eps(\lambda) .\notag
\end{align}
and the dissipation flux density measure
\begin{align}\label{def:Scale:fluxdensityhat}
  \hat\mu^\eps(\dx\lambda) &:= \frac{1}{\eps^{1-\alpha+\gamma}} \sum_{l\geq l_0} \delta_{\eps l}(\dx\lambda) \bra*{ a_l n_1(t) n_l(t) - b_{l+1} n_{l+1}(t)}
\end{align}
Let us note, that with the above definitions for $l\geq l_0$ and $\lambda =\eps l$ holds
\begin{equation}\label{e:Scale:continuityEqu}
  \dot n^\eps_l(t) - \frac{1}{\eps^{1-\alpha+\gamma}}(\cK[n] \phi)_l
  = \partial_t \nu^\eps_t(\lambda) + \partial^\eps_{\lambda} \mu^\eps_t(\lambda) = 0  ,
\end{equation}
where
\begin{equation*}
  \partial_\lambda^\eps \mu_t^\eps(\lambda) := \frac{\mu_t^\eps(\lambda+\eps) - \mu_t^\eps(\lambda)}{\eps} .
\end{equation*}
Let us summarize the a priori estimates found in Section~\ref{s:AprioriBD} and rewrite them in rescaled variables. We denote with $\cF^\eps_ {\mac}(\Pi_{\mac}^\eps n) = \eps^{-\gamma} \cF_{\mac}(n)$  and similarly for $\cA^\eps_{\mac}$ as well as $\cD^\eps_{\mac}$.
\begin{proposition}[Rescaled a priori estimates]
  With $x$ from~\eqref{ass:cutoff} holds
  \begin{enumerate}[ i) ]
    \item The rescaled free energy satisfies
      \begin{equation}\label{e:Scale:energybound}
        C \geq \cF^\eps(n^\eps) \geq \cF^\eps_{\mac}(\nu^\eps) = E(\nu^\eps) \bra*{1+ O(\eps^{x\sigma})}
      \end{equation}
     \item The total excess mass satisfies
       \begin{equation}\label{e:Scale:MassExcess}
         \abs*{\int \lambda \nu^\eps(\dx{\lambda}) - (\varrho_0 - \varrho_s)} \leq C \eps^{\gamma\bra*{\frac{1}{2}-x}} \sqrt{\cF^\eps_{\mac}(\nu^\eps)} .
       \end{equation}
     \item Let $[0,T] \ni t \mapsto \nu_t^\eps \in \cM^\eps$ be a rescaled curve of finite action and $\eta\in L^2((0,T))$, then for any $\kappa \in \pra*{\frac{1-\alpha-\gamma}{2},\frac{1-\alpha}{2}}$
       \begin{align}
         \int_0^T \eta(t) \int \lambda^{\kappa} \abs*{\mu^\eps_t(\dx\lambda)} \dx{t} &\leq C \;\bra*{\sup_{t\in [0,T]} \cF^{\eps}_{\mac}(\nu^\eps_t)}^{\frac{1-\alpha-2\kappa}{2\gamma}} \int_0^T  \abs*{\eta(t)} \sqrt{\cA^\eps_{\mac}(\nu^\eps_t,w^\eps_t)} \dx{t} , \label{e:Scale:ActionMoment}\\
         \int_0^T  \eta(t) \int \lambda^{\kappa} \abs*{\hat\mu^\eps_t(\dx\lambda)} \dx{t}
         &\leq C \;\bra*{\sup_{t\in [0,T]} \cF^{\eps}_{\mac}(\nu^\eps_t)}^{\frac{1-\alpha-2\kappa}{2\gamma}} \int_0^T  \abs*{\eta(t)} \sqrt{\cD^\eps_{\mac}(\nu^\eps_t)} \dx{t} . \label{e:Scale:FluxMoment}
       \end{align}
     \item If $\set{\nu_0^\eps}_{\eps>0}$ satisfies the tightness condition~\eqref{ass:tightness} and $[0,T]\ni t \mapsto \nu_t^\eps \in \cM^\eps$ are rescaled curves of finite action, then for all $t\in [0,T]$ also $\set{\nu_t^\eps}_{\eps>0}$ satisfies the tightness condition~\eqref{ass:tightness}.
  \end{enumerate}
\end{proposition}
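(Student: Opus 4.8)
The four statements are obtained by transcribing the a priori estimates of Section~\ref{s:AprioriBD} into the rescaled variables $\nu^\eps=\Pi_{\mac}^\eps n^\eps$ and $w^\eps$, using the cut-off $l_0=\lfloor\eps^{-x}\rfloor$ from~\eqref{ass:cutoff} together with the scaling relations $\cF^\eps=z_s\eps^{-\gamma}\cF$, $\cA^\eps_{\mac}=z_s\eps^{-(1-\alpha+2\gamma)}\cA_{\mac}$ and $\cD^\eps_{\mac}=z_s\eps^{-(1-\alpha+2\gamma)}\cD_{\mac}$. As the text already announces, no ingredient beyond Section~\ref{s:AprioriBD} is needed, so the only real work is the bookkeeping of powers of $\eps$. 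For~i) the upper bound is just the definition of $\cM^\eps$: from $\cF(n^\eps)\leq\eps^\gamma$ one gets $\cF^\eps(n^\eps)\leq z_s=:C$. The middle inequality follows from the nonnegativity of the summands $\omega_l\psi(n_l/\omega_l)$ in~\eqref{e:def:Lyapunov}, which survives the restriction of the sum to $l\geq l_0$. For the asymptotic identity I would apply the expansion Lemma~\ref{lem:BD:expansionF}: with $l_0=\lfloor\eps^{-x}\rfloor$ one has $O(l_0^{-\sigma})=O(\eps^{x\sigma})$, while $l_0^\gamma\omega_{l_0}$ is exponentially small in $\eps^{-x(1-\gamma)}$ by~\eqref{e:BD:assymptotic_Q} and hence negligible; computing $z_s\eps^{-\gamma}\cF^{\LSW}_{l_0}(n)=E(\nu^\eps)$ directly from~\eqref{e:def:BD:mac} then yields~\eqref{e:Scale:energybound}.

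For~ii) I would invoke the Czisár--Pinsker inequality~\eqref{e:Apriori3} of Proposition~\ref{lem:Apriori}. Since $\int\lambda\,\nu^\eps(\dx\lambda)=\sum_{l\geq l_0}l\,n^\eps_l$ and $\varrho=\varrho_0$, the left-hand side of~\eqref{e:Scale:MassExcess} is \emph{exactly} the left-hand side of~\eqref{e:Apriori3}. Rescaling $\sqrt{\cF}$ through~i) and using $l_0^\gamma=\eps^{-x\gamma}$ turns the first term into $C\eps^{\gamma(\frac12-x)}\sqrt{\cF^\eps_{\mac}(\nu^\eps)}$; the remainder $C_p\,l_0^{-p}=C_p\,\eps^{xp}$ is absorbed into the same term by choosing $p$ large enough that $\eps^{xp}$ is dominated by $\eps^{\gamma(\frac12-x)}$. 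Here one applies~\eqref{e:Apriori3} localized to the tail $l\geq l_0$, so that only the macroscopic free energy enters on the right.

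For~iii) the two bounds are the rescaled forms of~\eqref{e:L1ActionEst} and~\eqref{e:L1FluxEst}. Inserting the definitions~\eqref{def:Scale:fluxdensity} and~\eqref{def:Scale:fluxdensityhat} gives $\int\lambda^\kappa\,\abs*{\mu^\eps_t(\dx\lambda)}=z_s\,\eps^{\kappa-(1-\alpha+2\gamma)}\sum_{l\geq l_0}l^\kappa k^l\hat n^\omega_l\abs*{\nabla_l\phi}$, and likewise for $\hat\mu^\eps$ in terms of the discrete fluxes $a_l n_1 n_l-b_{l+1}n_{l+1}$. Substituting~\eqref{e:L1ActionEst}/\eqref{e:L1FluxEst} and converting $\cF^{\LSW}_{l_0}$, $\cA_{\mac}$, $\cD_{\mac}$ into $\cF^\eps_{\mac}$, $\cA^\eps_{\mac}$, $\cD^\eps_{\mac}$ via~i) and the scaling relations, the powers of $\eps$ combine to leave exactly~\eqref{e:Scale:ActionMoment} and~\eqref{e:Scale:FluxMoment}, with the admissible range $\kappa\in[\frac{1-\alpha-\gamma}{2},\frac{1-\alpha}{2}]$ inherited unchanged.

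Finally, for~iv) I would apply the discrete tightness-preservation lemma for curves of finite action established above (the analogue of Lemma~\ref{lem:LSW:tight}) with the $\eps$-dependent cut-off $M_1=R/\eps$, which exceeds $l_0=\eps^{-x}$ for small $\eps$, so that $\sum_{l\geq R/\eps}l\,n^\eps_l=\int_R^\infty\lambda\,\nu^\eps_t(\dx\lambda)$ exactly. This bounds $\int_R^\infty\lambda\,\nu^\eps_t(\dx\lambda)$ by $\int_{R/2}^\infty\lambda\,\nu^\eps_0(\dx\lambda)$ plus an error $\leq C\,(R/\eps)^{-(1-\alpha)/2}\int_0^T\sqrt{\cA(n^\eps,\phi^\eps)}\,\dx{t}$; since the un-rescaled action is uniformly bounded (indeed of order $\eps^{(1-\alpha+2\gamma)/2}$) the error vanishes as $\eps\to0$, while the initial term vanishes as $R\to\infty$ uniformly in $\eps$ by~\eqref{ass:tightness}. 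The one place that demands care is the exponent bookkeeping in~iii): checking that the powers of $\eps$ coming from $\mu^\eps$, from $\cA^\eps_{\mac}$ (resp.\ $\cD^\eps_{\mac}$), and from the exponent $\frac{1-\alpha-2\kappa}{2\gamma}$ of $\cF^\eps_{\mac}$ cancel precisely; everything else is direct transcription.
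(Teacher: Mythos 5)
Your proposal matches the paper exactly: the proposition is stated there without a separate proof, precisely as the transcription of the estimates of Section~\ref{s:AprioriBD} into rescaled variables (the upper bound from the definition of $\cM^\eps$ together with Lemma~\ref{lem:BD:expansionF} for i), the inequality~\eqref{e:Apriori3} for ii), the estimates~\eqref{e:L1ActionEst}--\eqref{e:L1FluxEst} for iii), and the discrete tightness lemma for iv)), which is exactly what you carry out. The only caveats sit at the level of the paper's own normalizations rather than of your argument --- for instance, \eqref{e:Apriori3} naturally yields $\sqrt{\cF^\eps}$ rather than $\sqrt{\cF^\eps_{\mac}}$ on the right-hand side of ii) (your ``localized to the tail'' remark does not fix this, since the tail comparison uses the \emph{microscopic} entropy), but this is harmless because $\cF^\eps\leq C$ on $\cM^\eps$ and only the order $\eps^{\gamma(1/2-x)}$ is ever used.
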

The above results enable us to conclude the $\liminf$ estimates and proof Theorem~\ref{thm:Scale:conv:CfA}.
\begin{proof}[Proof of Theorem~\ref{thm:Scale:conv:CfA}]
  \emph{Step 1: Convergence of $\nu^\eps$.}
  For $\zeta \in C_c^1(\R_+)$ and $0\leq t_1 < t_2 \leq T$, we calculate using the discrete continuity equation in the form~\eqref{e:Scale:continuityEqu}
  \begin{align*}
  \abs*{\int \zeta \dx\nu_{t_1}^\eps - \int \zeta \dx\nu_{t_1}^\eps} &= \abs*{ \int_{t_1}^{t_2} \int \partial_\lambda^\eps\zeta(\lambda) \mu^\eps_t(\dx\lambda) \dx{t}} \\
  &\leq \sup_{\lambda\in \R_+} \frac{\abs*{\partial_\lambda^\eps \zeta(\lambda)}}{\lambda^{\frac{1-\alpha}{2}}} \int_{t_1}^{t_2} \int \lambda^{\frac{1-\alpha}{2}} \abs*{\mu^\eps_t(\dx\lambda)} \dx{t} \\
  &\stackrel{\mathclap{\eqref{e:Scale:ActionMoment}}}{\leq} C \sup_{\lambda\in \R_+} \frac{\abs*{\partial_\lambda^\eps \zeta(\lambda)}}{\lambda^{\frac{1-\alpha}{2}}} \int_{t_1}^{t_2} \sqrt{\cA^\eps_{\mac}(\nu^\eps_t, w^\eps_t)} \dx{t} \\
  &\leq C \sup_{\lambda\in \R_+} \frac{\abs*{\zeta'(\lambda)}}{\lambda^{\frac{1-\alpha}{2}}} \sqrt{\abs*{t_1 - t_2}} .
  \end{align*}
  This estimate together with the bound~\eqref{e:Scale:energybound} imply via Arzelà-Ascoli the weak$^*$ convergence towards a weakly$^*$ continuous map $t\mapsto \nu_t$.
  Moreover, the a priori bounds~\eqref{e:Scale:energybound}, \eqref{e:Scale:MassExcess} and tightness condition~\eqref{ass:tightness} imply that $\int \zeta \dx\nu^\eps \to \int \zeta \dx\nu$ holds for $\zeta \in C^0(\R_+)$ satisfying
  \begin{equation*}
    \limsup_{\lambda\to\infty} \frac{\abs*{\zeta(\lambda)}}{\lambda} < \infty \qquad\text{and}\qquad \lim_{\lambda\to 0} \frac{\abs*{\zeta(\lambda)}}{\lambda^{1-\gamma}} = 0  ,
  \end{equation*}
  which implies that the excess mass is preserved
  \begin{equation*}
    \int \lambda \dx\nu_t = \varrho_0 - \varrho_s \quad \Rightarrow \quad \nu_t \in M , \qquad \text{for all } t\in[0,T].
  \end{equation*}
  Moreover, the bounds~\eqref{e:Scale:energybound} and \eqref{e:Scale:MassExcess} also imply by weak lower semi-continuity the estimate~\eqref{e:Scale:conv:Fliminf} and especially that $\sup_{t\in [0,T]} E(\nu_t) < \infty$.

  \emph{Step 2: Convergence of $\mu^\eps$.} The a priori estimate~\eqref{e:Scale:ActionMoment} implies the existence of a measure $\mu \in C_c^0([0,T] \times \R_+)^*$ such that up to subsequences
  \begin{equation}\label{e:Scale:conv:fluxdensity:p1}
    \int \int \zeta(t,\lambda) \dx\mu_t^\eps \dx{t} \to \int \int \zeta(t,\lambda) \dx\mu \qquad \text{for all } \zeta \in C_c^0([0,T] \times \R_+) .
  \end{equation}
  Now, we show the limiting measure is of the form $\mu(\dx t, \dx\lambda) = \lambda^\alpha w_t(\lambda) \nu_t(\dx\lambda) \dx{t}$ for some vector field $w_t$ with finite action.
  Therefore, we remind at the definition of $\mu^\eps$~\eqref{def:Scale:fluxdensity} and $\cA^\eps_{\mac}$~\eqref{def:Scale:action} to estimate
  \begin{align}\label{e:Scale:conv:fluxdensity:p2}
    \int\int \zeta(t,\lambda) \lambda^{\frac{1-\alpha}{2}} \mu_t^\eps(\dx\lambda) \dx{t} &\leq \bra*{\int_0^T \cA^\eps_{\mac}(\nu^\eps_t,w^\eps_t) \dx{t}}^\frac{1}{2} \times \\
    &\qquad\bra*{\int_0^T \frac{1}{z_s} \sum_{l\geq l_0} \zeta^2(t,\eps l) \, l^{1-\alpha} k^l \, \widehat{n^\eps}^\omega_l(t) \dx{t}}^{\frac{1}{2}} . \notag
  \end{align}
  The second term on the right hand side can be bounded by using the one-homogeneity and concavity of $(a,b)\mapsto \Lambda(a,b)$
  \begin{align*}
    \MoveEqLeft{\frac{1}{z_s}\sum_{l\geq l_0} \zeta^2(t,\eps l) \, l^{1-\alpha} k^l \, \widehat{n^\eps}^\omega_l(t) } \\
    &\leq \frac{1}{z_s}\sum_{l\geq l_0} \zeta^2(t,\eps l)  \, l \, \Lambda\bra*{n_1^\eps(t) n_l^\eps(t) , (z_s + q (l+1)^{-\gamma}) n_{l+1}^\eps(t)} \\
    &\leq \frac{1}{z_s}\Lambda\bra*{ n_1^\eps(t) \sum_{l\geq l_0} \zeta^2(t,\eps l) \, l \, n_l^\eps(t) , \bra*{z_s + q l_0^{-\gamma}}\sum_{l\geq l_0} \zeta^2(t,\eps l) \, l \, n_{l+1}^\eps(t)  } \\
    &\leq \frac{z_s + o(1)}{z_s} \sum_{l\geq l_0} \zeta^2(t,\eps l) \, l \, n_l^\eps(t),
  \end{align*}
  where we used in the last estimate that $l_0 =\eps^{-x}$, $|n_1^\eps - z_s| \leq C \sqrt{\cF(n)} \leq C \eps^\frac{\gamma}{2}$ by~\eqref{e:Apriori1} and the fact that $\zeta$ is uniformly continuous.
  Since $t\mapsto n^\eps(t)$ is a curve of finite action and by the convergence of the total mass, it follows that the right hand side of~\eqref{e:Scale:conv:fluxdensity:p2} is finite.
  Hence, we can pass to the limit $\eps\to 0$ in~\eqref{e:Scale:conv:fluxdensity:p2} by the same argument as in~\eqref{e:Scale:conv:fluxdensity:p1}.
  It follows for a subsequence which attains
  \begin{equation*}
     \int_0^T \cA^\eps_{\mac}(\nu^\eps, w^\eps) \dx{t} \to  A^* :=  \liminf_{\eps\to 0} \int_0^T \cA^\eps_{\mac}(\nu^\eps, w^\eps) \dx{t}.
  \end{equation*}
  the estimate
  \begin{equation*}
    \int\int \zeta(t,\lambda) \lambda^\frac{1-\alpha}{2} \mu_t(\dx\lambda) \dx{t} \leq \bra*{ A^* \int\int \zeta^2(t,\lambda) \lambda \nu_t(\dx\lambda)\dx{t}}^\frac{1}{2} ,
  \end{equation*}
  with $\mu_t(\dx\lambda)$ denoting the disintegration of $\mu$ in $t$.
  Hence, we can conclude as in the derivation of~\eqref{e:LSW:compact:Action:p1} to find $v\in L^2(\lambda \dx\nu_t \dx{t})$ by the Riesz representation theorem showing lower semi-continuity of the action~\eqref{e:Scale:conv:Aliminf}.

  \emph{Step 3: Convergence of the dissipation~$\cD^\eps$.} We observe that $\cD^\eps_{\mac}(\nu_t^\eps) = \cA^\eps_{\mac}(\nu_t^\eps,{\tilde w}^\eps)$, where ${\tilde w}^\eps$ is the special vector field given by $-\nabla_l D\cF^\eps(\nu_t^\eps)$, i.e.\ for all~$l$
  \[
    \eps^\gamma {\tilde w}^\eps_t(\eps l) = \log \frac{n_1^\eps n_l^\eps}{\omega_1 \omega_l} - \log \frac{n_{l+1}^\eps}{\omega_{l+1}} .
  \]
  Therefore, we can apply the same arguments of step 2, but now to the dissipation flux density $\hat\mu^\eps$ defined in~\eqref{def:Scale:fluxdensityhat} and use the a priori estimate~\eqref{e:Scale:FluxMoment} to deduce the $\liminf$ estimate
  \[
    \liminf_{\eps\to 0} \int_0^T \cD^\eps(\nu_t^\eps) \dx{t} \geq \int_0^T A(\nu_t, \tilde w_t) \dx{t} =  \int_0^T \int \lambda^\alpha \abs*{\tilde w_t}^2 \nu_t(\dx\lambda) \dx{t},
  \]
  for some $\tilde w \in C_c^0([0,T]\times \R_+)^*$.
  It, is left to show that $h^\eps(t) \rightharpoonup h(t)$ in $L^2((0,T))$ and $\tilde w_t$ is of the form $h(t) -q/\lambda^\gamma$ wit $h\in L^2([0,T])$, however this statement follows exactly along the lines of \cite[Lemma 2.6]{Niethammer2003}.
  The final result~\eqref{e:Scale:conv:Dliminf} follows now by the definition of $D(\nu_t)$ as the infimum over all such $h\in L^2([0,T])$ from Lemma~\ref{lem:LSW:Dissipation}).

  \emph{Step 4: Continuity equation holds.} Finally, choosing a subsequence such that both convergences~\eqref{e:conv:nu} and~\eqref{e:conv:mu} holds for a test function $\zeta \in C_c^\infty([0,T]\times \R)$, we can pass to the limit in the weak form of the discrete continuity equation~\eqref{e:Scale:continuityEqu}
  \begin{align*}
    \int_0^T \int \partial_t \zeta(t,\lambda) \nu_t^\eps(\dx\lambda)\dx{t} &+ \int\int \partial_\lambda^\eps \zeta(t,\lambda) \mu_t^\eps(\dx\lambda) \dx{t} = 0 \\
    \downarrow \ \eps\to 0 \qquad & \qquad\qquad\qquad \downarrow  \eps\to 0 \\
    \int_0^T \int \partial_t \zeta(t,\lambda) \, \nu_t(\dx\lambda)\dx{t} &+ \int\int \partial_\lambda \zeta(t,\lambda) \, \mu_t(\dx\lambda) \dx{t} = 0 ,
  \end{align*}
  which shows~$(\nu,w)\in \CE_T$.
\end{proof}

\subsection{Quasistationary expansion: Proof of Theorem~\ref{thm:QuasiSmall}}\label{S:quasi}

The proofs of Proposition~\ref{prop:stability} and Theorem~\ref{thm:QuasiSmall} consists in several steps, which are formulated in the following Lemmata.
In the proofs of this section, $C$ is a generic constant, which is assumed to be independent of $\eps$ and only depending on the parameters inside of the rates from Assumption~\ref{ass:BD:rates}.
\begin{lemma}\label{lem:LowerMoment}
  Assume that $\cF_{\mac}^\eps(\nu^\eps) \leq C$, $l_0$ satisfies~\eqref{ass:cutoff} and $\nu^\eps$ satisfies the tightness condition~\eqref{ass:tightness}.
  Then for any $\kappa\in [0,1]$, there exists $c>0$ such that
  \begin{equation}\label{e:monexp:lowerMoment}
    \int \lambda^{\kappa} \dx{\nu^\eps} \geq c > 0 \qquad\text{uniformly in } \eps >0.
  \end{equation}
\end{lemma}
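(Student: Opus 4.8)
The plan is to reduce the statement to a uniform lower bound on the first moment, and then trade moments against each other on a bounded window, using tightness to control the tail. The case $\kappa = 1$ is immediate: since $x < \tfrac12$ in~\eqref{ass:cutoff}, the exponent $\gamma(\tfrac12 - x)$ is strictly positive, so by the mass excess estimate~\eqref{e:Scale:MassExcess} together with the standing bound $\cF^\eps_{\mac}(\nu^\eps)\leq C$ one has
\begin{equation*}
  \abs*{\int \lambda \dx{\nu^\eps} - \bar\varrho} \leq C\,\eps^{\gamma(\frac12 - x)}\sqrt{C} \longrightarrow 0 \qquad (\eps\to 0).
\end{equation*}
Hence $\int\lambda\dx{\nu^\eps} \to \bar\varrho = \varrho_0 - \varrho_s > 0$, and there is $\eps_0 > 0$ so that $\int\lambda\dx{\nu^\eps} \geq \bar\varrho/2$ for all $\eps \leq \eps_0$; this already settles $\kappa = 1$ and supplies the only quantitative input needed for the remaining cases.

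First I would localize this lower bound. By the tightness assumption~\eqref{ass:tightness} there is an $R > 0$, independent of $\eps$, with $\int_R^\infty \lambda\dx{\nu^\eps} \leq \bar\varrho/4$ uniformly in $\eps$; subtracting this from the previous step gives $\int_0^R \lambda\dx{\nu^\eps} \geq \bar\varrho/4$ for all $\eps\leq\eps_0$. Then for $\kappa\in[0,1)$ I would use the elementary bound $\lambda^\kappa = \lambda\cdot\lambda^{\kappa-1}\geq R^{\kappa-1}\lambda$, valid on $(0,R]$ because $\kappa-1\leq 0$ renders $\lambda\mapsto\lambda^{\kappa-1}$ non-increasing, to conclude
\begin{equation*}
  \int\lambda^\kappa\dx{\nu^\eps} \geq \int_0^R \lambda^\kappa\dx{\nu^\eps} \geq R^{\kappa-1}\int_0^R \lambda\dx{\nu^\eps} \geq \frac{\bar\varrho}{4}\,R^{\kappa-1} =: c > 0 ,
\end{equation*}
uniformly for $\eps\leq\eps_0$, which is~\eqref{e:monexp:lowerMoment}.

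I do not expect a genuine obstacle here; the whole content is the interplay of the two one-sided facts above. The only point deserving care is the direction in which mass could escape so as to make $\int\lambda^\kappa\dx{\nu^\eps}$ small. Concentration of $\nu^\eps$ near $\lambda = 0$ (which is possible, since the atoms start at $\eps l_0 \sim \eps^{1-x}\to 0$) actually \emph{increases} the $\kappa$-moment for $\kappa < 1$ and is therefore harmless; the real danger is mass escaping to $\lambda = \infty$, where $\lambda^\kappa$ is far smaller than $\lambda$, and this is exactly what tightness excludes. This is why the argument localizes the \emph{first} moment to $[0,R]$ rather than attempting to bound the integrand $\lambda^\kappa$ globally, and it mirrors the lower-moment argument already invoked (with its constant $\bar\varrho_T$) in the proof of the moment estimate Lemma.
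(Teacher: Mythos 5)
Your proof is correct and follows essentially the same route as the paper: both arguments localize the first moment to a bounded window $[0,R]$ using the mass excess estimate~\eqref{e:Scale:MassExcess} together with tightness, and then compare $\lambda^\kappa$ with $\lambda$ on that window via $\lambda^\kappa \geq R^{\kappa-1}\lambda$. The only cosmetic difference is the order of the steps (the paper restricts to $[0,M]$ first and then inserts the mass bound), and both proofs, like the paper's, really yield the claim for $\eps$ sufficiently small.
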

\begin{proof}
 The assumptions of the Lemma ensure the conservation of the excess mass~\eqref{e:Scale:MassExcess}.
 Together with the tightness assumption, we have for any $\kappa \leq 1 $
 \begin{align*}
   \int \lambda^{\kappa} \; \nu^\eps_t(\dx\lambda) &\geq \int_0^M \lambda^{\kappa} \; \nu^\eps_t(\dx\lambda) \\
    &\geq \frac{1}{M^{1-\kappa}}  \int_0^M \lambda \; \nu_t^\eps(\dx{\lambda}) \\
    &\geq \frac{1}{M^{1-\kappa}} \bra*{ \rho_0 - \rho_s - O_\eps\bra*{\eps^{\gamma\bra*{1/2 -x}}} - o_M(1) } .
 \end{align*}
 Hence, we can choose $M$ large enough but finite and $\eps$ small enough such that for some $c>0$ the estimate~\eqref{e:monexp:lowerMoment} holds.
\end{proof}
\begin{lemma}
  For any $n\in \cM$ holds
  \begin{equation}\label{e:DissBound0}
    \bra*{ u - h } \log\bra*{\frac{1+u}{1+h}} \leq \frac{\cD_{\mac}(n)}{A(z_s)} ,
  \end{equation}
  where $u:=u(z_s)$,
  \begin{equation*}
    h := \frac{n_1-z_s}{z_s} \qquad\text{and}\qquad u(z) := \frac{B - A(z)}{A(z)} ,
  \end{equation*}
  and
  \begin{equation}\label{e:def:barA:barB}
    A(z) := \sum_{l\geq l_0} a_l z n_l \qquad\text{and}\qquad  B := \sum_{l\geq l_0} b_{l+1} n_{l+1} .
  \end{equation}
  Moreover, if $\cF(n)\leq C \eps^{\gamma}$, $l_0$ satisfies~\eqref{ass:cutoff} and $\nu^\eps$ satisfies the tightness condition~\eqref{ass:tightness}, then it holds for $\eps$ small enough and some $C>0$ uniformly in $\eps$ the estimate~\eqref{e:DissBound} from Proposition~\ref{prop:stability}.
\end{lemma}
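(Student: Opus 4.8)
The plan is to prove the two assertions separately: the pointwise bound \eqref{e:DissBound0} by a convexity argument, and then \eqref{e:DissBound} by converting \eqref{e:DissBound0} into a quadratic estimate while tracking the $\eps$-scalings. For \eqref{e:DissBound0} I would introduce $\Phi(x,y) := (x-y)(\log x - \log y)$ for $x,y>0$, so that each summand of $\cD_{\mac}$ in \eqref{e:BD:Dissipation} equals $\Phi(a_l n_1 n_l, b_{l+1} n_{l+1})$. The function $\Phi$ is positively one-homogeneous and jointly convex on $(0,\infty)^2$: writing $\Phi(x,y)=(x-y)^2/\Lambda(x,y)$ with $\Lambda$ the logarithmic mean \eqref{e:def:LogMean}, convexity follows because the Hessian of $\Phi$ has nonnegative diagonal entries and vanishing determinant. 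A one-homogeneous convex function is subadditive, so $\cD_{\mac}(n)=\sum_{l\geq l_0}\Phi(a_l n_1 n_l, b_{l+1} n_{l+1}) \geq \Phi\bra*{\sum_{l\geq l_0} a_l n_1 n_l,\ \sum_{l\geq l_0} b_{l+1} n_{l+1}}$. Using $n_1 = z_s(1+h)$ and the definitions \eqref{e:def:barA:barB}, the two arguments equal $(1+h)A(z_s)$ and $B=(1+u)A(z_s)$, whence one-homogeneity gives $\Phi\bra*{(1+h)A(z_s),(1+u)A(z_s)} = A(z_s)\,(u-h)\log\frac{1+u}{1+h}$, and dividing by $A(z_s)$ yields \eqref{e:DissBound0}.

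For \eqref{e:DissBound} the first step is to control $h$ and $u$ \emph{from above}. From \eqref{e:Apriori1} together with $\cF(n)\leq C\eps^\gamma$ I obtain $\abs{n_1-z_s}\leq C\eps^{\gamma/2}$, hence $h=(n_1-z_s)/z_s\to 0$. For $u$ I would use the rates of Assumption~\ref{ass:BD:rates}: a telescoping of $B-z_s A(z_s)$ together with the elementary bounds $(l+1)^{-\gamma}\leq l_0^{-\gamma}$ and $\sum_{m\geq l_0+1}m^\alpha n_m\leq \sum_{m\geq l_0}m^\alpha n_m$ gives $B\leq (z_s+q\,l_0^{-\gamma})\,\sum_{m\geq l_0}a_m n_m$, that is $u\leq \tfrac{q}{z_s}\,l_0^{-\gamma}\to 0$. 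Now the mean value theorem yields $(u-h)\log\frac{1+u}{1+h}=(u-h)^2/(1+\xi)$ for some $\xi$ between $h$ and $u$, and since $\xi\leq\max\{h,u\}\to 0$ we have $1+\xi\leq 2$ for $\eps$ small. Hence \eqref{e:DissBound0} upgrades to $(u-h)^2\leq 2\,\cD_{\mac}(n)/A(z_s)$. Only upper bounds on $h$ and $u$ enter here: if $1+u$ is close to $0$ the inequality only improves, and the degenerate case $B=0$ forces $\cD_{\mac}(n)=\infty$, so the claim is trivial there.

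It remains to insert the scalings. By the definitions of $h^\eps$ and $u^\eps$ in Proposition~\ref{prop:stability} one has $h-u = z_s^{-1}\eps^{\gamma}(h^\eps-u^\eps)$; moreover $A(z_s)=z_s\eps^{1-\alpha}\int\lambda^\alpha\dx{\nu^\eps}$ and $\cD_{\mac}(n)=z_s^{-1}\eps^{1-\alpha+2\gamma}\cD^\eps_{\mac}(n)$. Substituting these into $(u-h)^2\leq 2\,\cD_{\mac}(n)/A(z_s)$, every power of $\eps$ and every factor $z_s$ cancels, leaving $(h^\eps-u^\eps)^2\leq 2\,\bra*{\int\lambda^\alpha\dx{\nu^\eps}}^{-1}\cD^\eps_{\mac}(n)$. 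Finally Lemma~\ref{lem:LowerMoment} with $\kappa=\alpha$ — which applies precisely because of the tightness condition \eqref{ass:tightness}, the cut-off \eqref{ass:cutoff} and the energy bound $\cF^\eps_{\mac}\leq C$ — provides $\int\lambda^\alpha\dx{\nu^\eps}\geq c>0$ uniformly in $\eps$, so that $(h^\eps-u^\eps)^2\leq C\,\cD^\eps_{\mac}(n)$ with $C$ independent of $\eps$; integrating in $t$ gives \eqref{e:DissBound}.

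The conceptual core is the subadditivity of $\Phi$, which is immediate once its convexity and one-homogeneity are recorded, so Part~1 is short. The genuine work, and the main obstacle, is the bookkeeping in the last paragraph: one must verify that the exponents of $\eps$ arising from the three separate rescalings (of $h-u$, of $A(z_s)$, and of $\cD_{\mac}$) cancel exactly, since it is precisely this cancellation that makes the uniform lower bound on the $\alpha$-moment from Lemma~\ref{lem:LowerMoment} the right tool to produce an $\eps$-independent constant.
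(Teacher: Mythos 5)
Your proof is correct and follows essentially the same route as the paper: your subadditivity of the jointly convex, one-homogeneous $\Phi(x,y)=(x-y)(\log x-\log y)$ is exactly the paper's Jensen inequality for $a\mapsto a\log(1+a)$ applied with weights $\alpha_l(z)$, your upper bounds on $h$ and $u$ and the mean-value linearization coincide with the paper's estimate $(u-h)\log\frac{1+u}{1+h}\geq (u-h)^2/\max\set{1+h,1+u}$, and the final rescaling combined with Lemma~\ref{lem:LowerMoment} is identical. The only caveat is that your identity $h-u=z_s^{-1}\eps^{\gamma}(h^{\eps}-u^{\eps})$ presupposes the normalization $u^{\eps}=z_s\,u(z_s)/\eps^{\gamma}$, i.e.\ the summand $z_s a_l n_l$ rather than $a_l n_l$ in the numerator of $u^{\eps}$ as printed in Proposition~\ref{prop:stability} --- evidently the intended reading, which you have silently (and correctly) adopted.
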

\begin{proof}
  For the proof $n$ is fixed such that $\cF(n)\leq C \eps^\gamma$.
  Then, we introduce two measures $\alpha$ and $\beta$ on $\set*{l_0,l_0+1,\dots}$
  \[
     \forall l\geq l_0:\qquad \alpha_l(z) := a_l z n_l \qquad\text{and}\qquad \beta_l := b_{l+1} n_{l+1}
  \]
  with partition sums $A(z)$ and $B$~\eqref{e:def:barA:barB}, respectively.

  We introduce $\cD_{\mac,z}(n)$ the constant monomer density dissipation of the large clusters
  \begin{align*}
    \cD_{\mac,z}(n) := \sum_{l\geq l_0} \bra*{ a_l z n_l - b_{l+1} n_{l+1}} \log\frac{a_l z n_l}{b_{l+1} n_{l+1}} \geq 0.
  \end{align*}
  Note, that by this definition $\cD_{\mac}(n) = \cD_{\mac,n_1}(n)$.
  By the definition~\eqref{e:def:barA:barB}, it follows $A(n_1)=(1+h)A(z_s)$ and the identity
  \begin{equation}\label{e:MacDiss:uhId}
    u(n_1) = \frac{1}{1+h}\bra*{u(z_s) - h }.
  \end{equation}
  Now, rewrite and $\cD_{\mac,z}(n)$ and apply the Jensen inequality to the one-homogeneous convex function $a \mapsto a \log\bra*{1+a}$
  \begin{align*}
    \cD_{\mac,z}(n) &= \sum_{l\geq l_0} \alpha_l(z) \frac{\beta_l-\alpha_l(z)}{\alpha_l(z)} \log\bra*{1+\frac{\beta_l-\alpha_l(z)}{\alpha_l(z)}} \\
    &\geq \bra*{B- A(z)} \log\frac{B}{A(z)} = A(z)\bra*{u(z) \log(1+u(z)}
  \end{align*}
  Hence, we obtain by setting $z=n_1$ and using~\eqref{e:MacDiss:uhId}, the estimate
  \begin{align*}
    A(z)\bra*{u(z) \log(1+u(z)} &= A(z_s) \bra*{u - h} \log\bra*{1+\frac{u-h}{1+h}},
  \end{align*}
  from where we conclude~\eqref{e:DissBound0}.
  By using the explicit expression of the rates~\eqref{ass:BD:rates} follows
  \begin{align*}
    2 A(z_s) - B &= \sum_{l\geq l_0} z_s l^{\alpha}\bra*{1- \frac{q}{z_s l^{\gamma}}}  n_l + z_s l_0^\alpha n_{l_0} + q l_0^{\alpha-\gamma} n_{l_0} \\
    &\geq \bra*{1 - \frac{q}{z_s l_0^{\gamma}}} A(z_s) \geq A(z_s) \bra*{1- O(\eps^{x\gamma})} ,
  \end{align*}
  by the definition of $l_0$~\eqref{ass:cutoff}.
  Hence, we have $ B \leq A(z_s)  \bra*{1+ O(\eps^{x\gamma})}$ and in particular with~\eqref{e:MacDiss:uhId}
  \begin{equation}\label{e:MacDiss:upper_u}
    u(n_1) \leq \frac{u(z_s)}{1-\abs*{h}} + \frac{\abs*{h}}{1-\abs*{h}} \leq O(\eps^{x\gamma}) + O\bra*{\eps^{\frac{\gamma}{2}}} = O(\eps^{x\gamma}),
  \end{equation}
  where we used that with $\cF(n)\leq C \eps^\gamma$ also $\abs*{h}\leq C \eps^\frac{\gamma}{2}$ from the estimate~\eqref{e:Apriori1}.
  The estimate~\eqref{e:MacDiss:upper_u} allows to linearize the bound~\eqref{e:DissBound0} as follows
  \begin{equation*}
    \bra*{u - h}\log\bra*{\frac{1+h}{1+u}} \geq \frac{(u-h)^2}{\max\set*{1+h,1+u}} \geq \frac{(u-h)^2}{1+ O(\eps^{x\gamma})} .
  \end{equation*}
  Finally, to deduce the estimate~\eqref{e:DissBound}, it is enough to rewrite it in rescaled variables and use the estimate~\eqref{e:monexp:lowerMoment} from Lemma~\ref{lem:LowerMoment}
  \begin{equation*}
   A = z_s \eps^{1-\alpha} \int \lambda^{\alpha} \nu^\eps(\dx\lambda) \geq c \eps^{1-\alpha} > 0 . \qedhere
  \end{equation*}
\end{proof}
The time-scale separation between the dynamic of the small clusters and the one of the large clusters is characterized by the following logarithmic Sobolev type inequality.
\begin{proposition}[Microscopic energy-dissipation estimate]
  Let $\omega_l(z) := z^l Q_l$.
  Then for all $n\in \R_+^\N$ with $\cF_{\mic}(n)\leq C\eps^{\gamma}$ there exists $C_{\EED}$ independent of $\eps$ such that it holds
  \begin{equation}\label{e:BD:MLSI}
    \cH_{\mic}(n \mid \omega(n_1)) \leq C_{\EED} \eps^{-x\bra*{1-\alpha+\gamma}} \cD_{\mic}(n) ,
  \end{equation}
  where $\cH_{\mic}$ is the microscopic part of the relative entropy between $n$ and $\omega(n_1)$ defined by
  \begin{equation*}
    \cH_{\mic}(n \mid \omega(z)) := \sum_{l=1}^{l_0-1} \omega_l(z) \psi\bra*{\frac{n_l}{\omega_l(z)}} \qquad\text{with}\qquad \psi(a) = a \log a - a +1 .
  \end{equation*}
\end{proposition}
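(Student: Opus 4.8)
The plan is to read the claimed bound~\eqref{e:BD:MLSI} as a logarithmic Sobolev inequality for the finite reversible birth--death chain underlying the reactions $X_1+X_l\rightleftharpoons X_{l+1}$ for $l=1,\dots,l_0-1$, and to track how its constant scales with the chain length $l_0=\lfloor\eps^{-x}\rfloor$. First I would use the detailed balance condition~\eqref{e:BD:DBC} with the selecting parameter chosen as $z=n_1$. Setting $\rho_l:=n_l/\omega_l(n_1)$ and using $\omega_1(n_1)=n_1$, so that $\rho_1=1$, the quasi-equilibrium flux $k^l(n_1):=a_l\,\omega_1(n_1)\,\omega_l(n_1)=b_{l+1}\,\omega_{l+1}(n_1)$ factors out of each reaction term,
\begin{equation*}
  a_l n_1 n_l - b_{l+1} n_{l+1} = k^l(n_1)\bra*{\rho_l-\rho_{l+1}}, \qquad \log\frac{a_l n_1 n_l}{b_{l+1} n_{l+1}} = \log\rho_l-\log\rho_{l+1}.
\end{equation*}
Both sides of~\eqref{e:BD:MLSI} then become functionals of $\rho$ alone,
\begin{equation*}
  \cH_{\mic}(n\mid\omega(n_1)) = \sum_{l=1}^{l_0-1}\omega_l(n_1)\,\psi(\rho_l), \qquad \cD_{\mic}(n) = \sum_{l=1}^{l_0-1}k^l(n_1)\bra*{\rho_l-\rho_{l+1}}\bra*{\log\rho_l-\log\rho_{l+1}},
\end{equation*}
with the left endpoint pinned at $\rho_1=1$. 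The target is thus an entropy--entropy-production ($\mLSI$-type) inequality with a constant of order $l_0^{1-\alpha+\gamma}=\eps^{-x(1-\alpha+\gamma)}$.

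Next I would remove the $n_1$-dependence of the reference weight. The $l=1$ contribution to the entropy is controlled by $\cF_{\mic}(n)\le C\eps^\gamma$, so the Csiszár--Pinsker estimate~\eqref{e:Apriori1} yields $\abs{n_1-z_s}\le C\sqrt{\cF_{\mic}(n)}\le C\eps^{\gamma/2}$; for $l<l_0$ with $x<\tfrac12$ the factor $(n_1/z_s)^l$ stays bounded above and below, so $\omega_l(n_1)$ and $k^l(n_1)$ are comparable, uniformly in $\eps$, to their values at $z=z_s$. It therefore suffices to prove the inequality for the fixed weight $\omega_l:=\omega_l(z_s)$ and conductances $c_l:=a_l\,\omega_1\,\omega_l$, whose two-sided asymptotics I would read off from Lemma~\ref{lem:BD:assymptotic_Q}: with $c_0:=\tfrac{q}{z_s(1-\gamma)}$ one has $\omega_l\sim z_s\,l^{-\alpha}e^{-c_0 l^{1-\gamma}}$ and $c_l\sim z_s^2\,e^{-c_0 l^{1-\gamma}}$.

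To prove the reduced inequality I would substitute $g_l:=\sqrt{\rho_l}$ (so $g_1=1$), use the elementary bound $(a-b)(\log a-\log b)\ge 4(\sqrt a-\sqrt b)^2$ to dominate $\cD_{\mic}$ from below by the Dirichlet form $\sum_{l<l_0}c_l(g_l-g_{l+1})^2$, and exploit the pinning through a discrete path estimate $(g_l-1)^2\le R_l\sum_j c_j(g_j-g_{j+1})^2$ with resistance $R_l:=\sum_{j<l}c_j^{-1}$. This gives pointwise control of $g_l$, hence of the logarithmic factor $\log g_l$ appearing in $\psi(g_l^2)$, and after optimisation reduces the claim to a logarithmic Muckenhoupt (Bobkov--Götze/Miclo) quantity
\begin{equation*}
  B := \max_{1\le k\le l_0}\ R_k\,T_k\,\log\frac{1}{T_k}, \qquad T_k:=\sum_{l=k}^{l_0-1}\omega_l.
\end{equation*}
Inserting the asymptotics above, the sub-exponential decay gives $R_k\sim k^\gamma e^{c_0 k^{1-\gamma}}$, $T_k\sim k^{\gamma-\alpha}e^{-c_0 k^{1-\gamma}}$ and $\log(1/T_k)\sim c_0\,k^{1-\gamma}$, so that $R_k T_k\log(1/T_k)\sim k^{1-\alpha+\gamma}$, maximised at $k=l_0$, which produces exactly $B=O(l_0^{1-\alpha+\gamma})=O(\eps^{-x(1-\alpha+\gamma)})$ and the constant $C_{\EED}$.

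I expect the decisive difficulty to be this sharp $l_0$-scaling. The plain Poincaré (Hardy) constant of the chain is only of order $R_k T_k\sim l_0^{2\gamma-\alpha}$; the extra factor $l_0^{1-\gamma}$, and hence the full exponent $1-\alpha+\gamma$, is genuinely of logarithmic-Sobolev type, arising from the term $\log(1/T_k)\sim k^{1-\gamma}$. Making this rigorous needs the two-sided control of $Q_l$ from Lemma~\ref{lem:BD:assymptotic_Q}, the uniform comparability of $\omega(n_1)$ and $\omega(z_s)$ on $\{1,\dots,l_0\}$, and the bookkeeping of the normalisation mismatch between the relative entropy $\cH_{\mic}$ and the probabilistic entropy $\Ent$ entering the functional inequality; for these estimates I would follow the techniques of~\cite{Canizo2015}.
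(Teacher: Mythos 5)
Your proposal is correct and follows essentially the same route as the paper: reduce via detailed balance at $z=n_1$ and the bound $(a-b)(\log a-\log b)\ge 4(\sqrt a-\sqrt b)^2$ to a Dirichlet form, compare $\omega(n_1)$ with $\omega(z_s)$ using $\abs{n_1-z_s}\le C\eps^{\gamma/2}$ and $l_0=\lfloor\eps^{-x}\rfloor$, and control the log-Sobolev constant by the Bobkov--Götze/Miclo quantity (tail mass) $\times$ (resistance) $\times$ (log), whose asymptotics $W_l\sim l^{\gamma-\alpha}e^{-c_0l^{1-\gamma}}$, $V_l\sim l^{\gamma}e^{c_0l^{1-\gamma}}$, $\log(1/W_l)\sim l^{1-\gamma}$ give exactly $C\,l_0^{1-\alpha+\gamma}$ as in the paper, which likewise quotes \cite[Prop.~3.2, Cor.~2.4]{Canizo2015}. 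The only cosmetic difference is that you exploit the pinning $\rho_1=1$ through a path/resistance argument, whereas the paper handles the normalisation mismatch between $\cH_{\mic}(n\mid\omega(n_1))$ and the probabilistic entropy via the explicit correction term of \cite[Prop.~3.2]{Canizo2015} --- a detail you correctly flag and defer to the same reference.
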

\begin{proof}
  We note, that the function  $(a,b) \mapsto \varphi(a,b):=(a-b)\bra*{\log a - \log b}$ occurring in the definition of the dissipation is one-homogeneous.
  In addition, the following lower bound holds
  \[
    (a-b)\bra*{\log a - \log b} \geq 4 \bra*{\sqrt{a} - \sqrt{b}}^2 .
  \]
  Moreover, we remind that $\omega(z)$ satisfies the detailed balance condition $a_l \omega_1(z) \omega_l(z) = b_{l+1} \omega_{l+1}(z)$.
  By choosing $z=n_1$, the dissipation can be rewritten and bounded from below by
  \begin{align*}
    \cD(n) &= \sum_{l\geq 1} a_l n_1 \omega_l(n_1) \ \varphi\bra*{\frac{n_l}{\omega_l(n_1)}, \frac{n_{l+1}}{\omega_{l+1}(n_1)}}  \\
    &\geq 4  \sum_{l\geq 1} a_l n_1 \omega_l(n_1) \bra*{\sqrt{\frac{n_l}{\omega_l(n_1)}} - \sqrt{ \frac{n_{l+1}}{\omega_{l+1}(n_1)}}}^2 := \overline \cD(n).
  \end{align*}
  Hence, instead of showing the estimate~\eqref{e:BD:MLSI}, it is sufficient to proof
  \[
    \cH_{\mic}\bra*{n\mid \omega(n_1)} \leq C_{\EED} \overline \cD(n).
  \]
  This inequality was investigated in~\cite{Canizo2015}.
  To apply the result there, we introduce the measures
  \begin{equation*}
    l\in \set*{1,\dots,l_0-1} : \quad \mu_l(z) := \frac{\omega_l(z)}{\sum_{l=1}^{l_0-1} \omega_l(z)} \quad\text{and}\quad  \nu_l(z) := \frac{a_l \omega_l(z)}{\sum_{l=1}^{l_0-1} \omega_l(z)} .
  \end{equation*}
  Hereby, we note that $\mu$ is a probability measure, but $\nu$ not necessarily.
  Let us assume the following mixed logarithmic Sobolev inequality
  \begin{equation}\label{e:BD:LSI}
    \Ent_{\mu}(f^2) := \sum_{l=1}^{l_0-1} \mu_l f_l^2 \log\frac{f_l^2}{\sum_{l=1}^{l_0-1} f_l^2 \mu_l} \leq C_{\LSI} \sum_{l=1}^{l_0-1} \nu_l \bra*{ f_l - f_{l+1}}^2 .
  \end{equation}
  Then, \cite[Proposition 3.2]{Canizo2015}, where by the different normalization of $\nu$, the constant simplifies to
  \begin{equation}\label{e:BD:MLSI:p2}
    C_{\EED}\leq \frac{C_{\LSI}}{n_1^2} \bra*{ n_1^2 + 2 \,\bra*{\sum_{l=1}^{l_0-1} n_l}\,\bra*{\sum_{l=1}^{l_0-1} \omega_l(n_1)}}.
  \end{equation}
  To proof the mixed logarithmic Sobolev inequality~\eqref{e:BD:LSI}, we use \cite[Corollary 2.4 and Remark 2.5]{Canizo2015}, from which we obtain the bound
  \begin{equation}\label{e:BD:MLSI:p1}
    C_{\LSI} \leq 480 \sup_{1< l < l_0} W_l(n_1) \log\bra*{\frac{W_1(n_1)}{W_l(n_1)}} V_l(n_1),
  \end{equation}
  where
  \[
    W_l(z) = \sum_{j=l}^{l_0-1} \omega_j(z) \quad\text{and}\quad V_l(z) = \sum_{j=1}^{l-1} \frac{1}{a_j \omega_j(z)} .
  \]
  We will establish the following estimates for $\abs*{z-z_s}\leq C \eps^{\frac{\gamma}{2}}$ and some $C>1$
  \begin{align}
     \frac{1}{C} \omega_l(z) \leq W_l(z) &\leq C l^\gamma \omega_l(z) \label{e:BD:MLSI1} \\
      V_l(z) &\leq \frac{C l^{\gamma-\alpha}}{\omega_l(z)} \label{e:BD:MLSI2}
  \end{align}
  We postpone the proof of the estimates and first show the final result.
  By a combination of~\eqref{e:BD:MLSI1} and~\eqref{e:BD:MLSI2} with~\eqref{e:BD:MLSI:p1}, we obtain the estimate
  \begin{align*}
    C_{\LSI}\leq C \sup_{1<l<l_0} l^{2\gamma-\alpha} \log\bra*{\frac{C}{\omega_l(z)}}.
  \end{align*}
  By the expansion~\eqref{e:BD:assymptotic_Q} follows
  \begin{equation*}
    \frac{1}{\omega_l(z)} \leq C l^\alpha \exp\bra*{ \frac{q(1-\gamma)}{z_s} l^{1-\gamma} - l \log\frac{z}{z_s} } \leq C \exp\bra*{C l^{1-\gamma}},
  \end{equation*}
  where we used that $l_0\abs*{\log \frac{z}{z_s}}$ is uniformly bounded, because of $\abs*{z-z_s}\leq C\eps^\frac{\gamma}{2}$.
  We obtain the upper bound $C_{\LSI} \leq C \sup_{1 < l < l_0} l^{1-\alpha+\gamma} \leq C l_0^{1-\alpha+\gamma}$. A combination of this bound with~\eqref{e:BD:MLSI:p2} leads to the bound
  \begin{equation*}
     C_{\EED} \leq C \eps^{-x(1-\alpha+\gamma)}\ \frac{n_1^2 + W_1(n_1) \sum_{l=1}^{l_0-1} n_l}{n_1^2}
  \end{equation*}
  The conclusion~\eqref{e:BD:MLSI} follows now from~\eqref{e:BD:MLSI1}, $\abs*{n_1-z_s}\leq C\eps^{\frac{\gamma}{2}}$ and the bound $\sum_{l=1}^{l_0-1} n_l \leq \sum_{l\geq 1} l n_l = \varrho$. To proof the estimates~\eqref{e:BD:MLSI1} and~\eqref{e:BD:MLSI2}, we first observe that by the assumption $\abs*{z-z_s}\leq C \eps^{\frac{\gamma}{2}}$, we have the comparison
  \[
    \frac{\omega_l(z)}{\omega_l(z_s)} = \bra*{\frac{z}{z_s}}^l \leq \bra*{1 + C \eps^{\frac{\gamma}{2}}}^l \leq \exp\bra*{ C \eps^{\frac{\gamma}{2}} l_0} \leq C
  \]
  by the choice of $l_0$~\eqref{ass:cutoff}.
  In the complete analog way, we get $\frac{\omega_l(z)}{\omega_l(z_s)} \geq \frac{1}{C}$.
  Hence, it is enough to show~\eqref{e:BD:MLSI1} and~\eqref{e:BD:MLSI2} for $z=z_s$.

  Therefore, we use the expansion~\eqref{e:BD:assymptotic_Q} from Lemma~\ref{lem:BD:assymptotic_Q} in the form: For some constant $C>1$ and any $l\geq 1$ holds
  \[
    \frac{1}{C l^{\alpha} z_s^{l-1}} \exp\bra*{ - \frac{q}{z_s} (1-\gamma) l^{1-\gamma}} \leq Q_l \leq \frac{C}{l^{\alpha} z_s^{l-1}} \exp\bra*{ - \frac{q}{z_s} (1-\gamma) l^{1-\gamma}}.
  \]
  The estimate~\eqref{e:BD:MLSI1} with $z=z_s$ is now proven
  \begin{align*}
    W_l(z) &\leq \frac{C}{z_s^2} \sum_{j=l}^{l_0-1} \frac{1}{j^\alpha} \exp\bra*{- \frac{q}{z_s} (1-\gamma) j^{1-\gamma}} \leq \frac{C}{l^{\alpha}} \sum_{j=l}^{l_0-1} \exp\bra*{-\frac{q}{z_s} (1-\gamma) j^{1-\gamma}} \\
    &\leq \frac{C}{l^\alpha} \int_{l}^{l_0-1} \exp\bra*{-\frac{q}{z_s} (1-\gamma) v^{1-\gamma}} \dx{v} \leq C l^{\gamma-\alpha} \int_{l^{1-\gamma}}^\infty \exp\bra*{- \frac{q}{z_s} (1-\gamma) v} \dx{v}\\
    &\leq C l^{\gamma} \omega_l .
  \end{align*}
  The estimate~\eqref{e:BD:MLSI2} with $z=z_s$ follows similarly
  \begin{align*}
    V_{l}(z) &\leq \frac{C}{z_s^2} \sum_{j=1}^{l-1} \exp\bra*{\frac{q}{z_s} (1-\gamma) l^{1-\gamma}}
    \leq \frac{C}{z_s^2} \int_{1}^l \exp\bra*{\frac{q}{z_s} (1-\gamma) v^{1-\gamma}} \dx{v} \\
    &\leq C \frac{(1-\gamma)}{z_s^2} l^{\gamma} \int_0^{l^{1-\gamma}} \exp\bra*{\frac{q}{z_s} (1-\gamma) v} \dx{v}
    \leq C \frac{l^{\gamma-\alpha}}{\omega_l} . \qedhere
  \end{align*}
\end{proof}
The estimates~\eqref{e:monexp:q3} and~\eqref{e:monexp:micF} from Theorem~\ref{thm:QuasiSmall} are a consequence of~\eqref{e:BD:MLSI}.
\begin{proof}[Proof of Theorem~\ref{thm:QuasiSmall}]
  Finally, the estimate~\eqref{e:monexp:q3} follows just by rescaling $\cD_{\mic}$ and integrating the estimate along a curve of finite action.
  For the statement \eqref{e:monexp:micF}, we first observe that $\cF_{\mic}(n) = \cH_{\mic}(n\mid \omega)$ and get by writing $z=z_s e^{h}$
  \begin{align*}
    \cH_{\mic}(n \mid \omega(z) ) - \cF_{\mic}(n) &= - \sum_{l=1}^{l_0-1} l n_l \log\frac{z}{z_s} - \sum_{l=1}^{l_0-1} \omega_l \bra*{1-\bra*{\frac{z}{z_s}}^l} \\
    &= h\; \bra*{\varrho_s - \sum_{l=1}^{l_0-1} l  n_l   + \sum_{l=1}^{l_0-1} l \omega_l \frac{e^{l h}-1}{lh} - \varrho_s }
  \end{align*}
  The first difference in the bracket can be bounded in terms of~\eqref{e:Apriori3} from Lemma~\ref{lem:Apriori}.
  The second difference can be explicitly expresses as follows
  \begin{align*}
    \abs*{\varrho_s -  \sum_{l=1}^{l_0-1} l \omega_l \frac{e^{l h}-1}{lh}} &\leq \sum_{l\geq l_0 } l \omega_l + \sum_{l=1}^{l_0-1} l\omega_l \frac{e^{l h}-1-lh}{lh} \\
    &\leq C l_0^\gamma \omega_{l_0} + C h \sum_{l=1}^{l_0-1} l^2 \omega_l \leq C \bra*{l_0^\gamma \omega_{l_0} + h },
  \end{align*}
  where, we used the upper in~\eqref{e:BD:MLSI1}, which holds by the proof also with $l_0 = \infty$.
  Moreover, $\omega_l$ has arbitrary high moments following from the expansion~\eqref{e:BD:assymptotic_Q}.
  By the choice of $l_0$~\eqref{ass:cutoff} and again~\eqref{e:BD:assymptotic_Q} follows that $l_0^\gamma \omega_{l_0}\leq C \eps^p$  for any $p>0$.
  Hence, combining all these estimates and reminding that $\cF_{\mic}(n)\leq C \eps^\gamma$, we get
  \begin{equation*}
    \abs*{\cH_{\mic}(n \mid \omega(z) ) - \cF_{\mic}(n)} \leq C \abs*{h} \bra*{ \eps^{-x\gamma} \sqrt{\cF_{\mic}(n)} + \eps^p + \abs*{h}} \leq C \abs*{h} \bra*{ \eps^{\sigma} + \abs*{h}} ,
  \end{equation*}
  where $\sigma = \gamma\bra*{\tfrac{1}{2}-x} > 0$ by~\eqref{ass:cutoff}.
  Hence, we can conclude for a curve of finite action
  \begin{align*}
    \int_0^T \!\!\cF_{\mic}^\eps(n(t)) \dx{t} &\leq z_s \eps^{-\gamma} \int_0^T \cH_{\mic}\bra*{n(t) \mid \omega(n_1(t))} \dx{t} \\
    &\quad+ C \eps^{-\gamma} \int_0^T \abs*{\log\bra*{\frac{n_1(t)}{z_s}-1}}\bra*{ \eps^\sigma + \abs*{\log\bra*{\frac{n_1(t)}{z_s}-1}}} \dx{t} \\
    &\leq C\eps^{(1-x)(1-\alpha+\gamma)} \int_0^T \cD^\eps_{\mic}(n(t)) \dx{t} \\
    &\quad + C \eps^{\gamma} \int_0^T \abs*{\frac{n_1(t)-z_s}{\eps^\gamma}}^2 \dx{t} + C \eps^{\sigma} \sqrt{T} \bra*{\int_0^T \abs*{\frac{n_1(t)-z_s}{\eps^\gamma}}^2 \dx{t}}^\frac12 .
  \end{align*}
  The conclusion~\eqref{e:monexp:micF} follows by~\eqref{e:Scale:conv:monomers} from Theorem~\ref{thm:Scale:conv:CfA} .
\end{proof}

\renewcommand\appendixname{}
\appendix

\section{Gradient structures for coagulation and fragmentation models}\label{s:GScfModels}

\subsection{Reversible chemical reactions as gradient flows}

This part of the appendix shows the general structure for reversible chemical reactions. Since, the Becker--Döring equation and other coagulation-fragmentation models can be interpreted as an infinite set of chemical reactions~\eqref{e:BD:ChemReact}, they fall into this category. The basic observation goes back to Mielke~\cite{Mielke2011a}, who found the entropic gradient flow structure for reversible chemical reactions.
\begin{definition}[Reversible chemical reaction]\label{def:revChemReact}
Let $n\in \R_+^N$ be the densities of $N\in \N \cup\set*{+\infty}$ different chemical species (or complexes) $X_i$ reacting according to the mass action law.
Each reaction $r=1,\dots R$ with $R\in \N\cup\set*{+\infty}$ is characterized by the stoichiometric coefficients $x^r,y^r \in \N_0^{N}$ and forward and backward reaction rates $k_\pm^r >0$
\begin{equation}\label{e:GenCR}
 x_1^r X_1 + \dots + x_N^r X_N \stackrel[k_-^r]{k_+^r}{\rightleftharpoons} y_1^r X_1 + \dots + y_N^r X_N , \qquad r=1,\dots , R .
\end{equation}
The chemical reaction is assumed to be reversible. That is, there exists a state $\omega = (\omega_1,\dots,\omega_N)\in \R^N$ such that
\begin{equation}\label{e:DBrel}
 k_+^r \omega^{x^r} = k_-^r \omega^{y^r} =: k^r .
\end{equation}
Here, the notation for multiindices is used: $\omega^{x^r} = \prod_{i=1}^N \omega_i^{x_i^r}$.
The evolution equation for the density is given by
\begin{equation}\label{e:def:ChemRateEvo}
  \dot n = - \sum_{r=1}^R k^r \bra*{\frac{n^{x^r}}{\omega^{x^r}} - \frac{n^{y^r}}{\omega^{y^r}}} \bra*{x^r - y^r} .
\end{equation}
\end{definition}
The Becker--Döring clustering equation interpreted as an infinite set of chemical reactions~\eqref{e:BD:ChemReact} fall in this framework by setting $N=R=\infty$ and $x^r_i := \delta_{i,1}+\delta_{i,r}$ and $y^r_i := \delta_{i,r+1}$.
The detailed balance condition~\eqref{e:DBrel} is satisfied in terms of the one-parameter family of equilibrium distributions~$\omega(z)$~\eqref{e:BD:equilibrium}.
Moreover, the more general Smoluchowski coagulation and fragmentation model fit into this framework (cf.\ Appendix~\ref{s:ex:Smoluchowski}) under the assumption of detailed balance.

The free energy is defined as relative entropy with respect to the reversible equilibrium as in~\eqref{e:def:Lyapunov}, i.e. $\cF(n) = \cH(n\mid \omega)$
and hence
\(
 D\cF(n) = \bra*{\log \frac{n_1}{\omega_1},\dots , \log \frac{n_N}{\omega_N}} .
\)
To define the manifold of states, the stoichiometric subspace and its complement are used
\begin{equation*}
 \cS := \Span\set*{x^r - y^r : r=1,\dots R} \quad\text{and}\quad \cS^\perp :=\set*{s\in \R^N : \gamma \cdot s = 0, \ \forall \gamma \in \cS } .
\end{equation*}
Then, the manifold is given for some fixed $n_0 \in \R_+^N$ by the affine space of densities
\begin{equation*}
 \cM_{n_0} :=  \bra*{n_0 + \cS} \cap \R_+^N = \set*{n\in \R_+^N: n\cdot s = n_0 \cdot s , \forall s\in \cS^\perp}
\end{equation*}
The definition formalizes that $\cS^\perp$ contains all conversation laws of the reaction and therefore the tangent vectors on $\cM_{n_0}$ are given by $\R^\cS$.
Coagulation and fragmentation models of one species, like Becker--Döring, in this terminology are characterized by
\begin{equation*}
  \cS^\perp = \Span\set*{ \mathbf{I} } , \qquad\text{with} \qquad \mathbf{I}  := (1,2,3,4,\dots).
\end{equation*}
Hence, the manifold has only one conserved quantity, which is the density $\varrho_0>0$ of the total number of particles $\cM := \set*{ n \in \R_+^\N : n \cdot \mathbf{I} = \sum_{l=1}^\infty l n_l = \varrho_0 }$.

The derivative of the energy $D\cF$ is a force and has to be interpreted as covector.
The underlying metric can be specified by mapping covectors to (tangent-)vectors.
This is done via the Onsager matrix to be defined as the symmetric semi-positive definite matrix
\begin{equation}\label{e:def:Onsager}
 \cK(n) := \sum_{r} k^r \Lambda\bra*{\frac{n^{x^r}}{\omega^{x^r}},\frac{n^{y^r}}{\omega^{y^r}}} \ (x^r - y^r) \otimes (x^r - y^r)  ,
\end{equation}
where $\Lambda(\cdot,\cdot)$ is the logarithmic mean in~\eqref{e:def:LogMean}.
Hence, recalling that the space of vectors was given by $\R^\cS$, we define the covectors with the help of the Onsager operator by~\eqref{e:def:cotangent}, where the identification is
well-defined since the image of $\cK$ is by definition $\R^\cS$, whenever $n$ is strictly positive in all of its components. Note, although the tangent space is state independent, this is not the case for the cotangent space.

With this preliminary definitions a reversible chemical reaction as given in Definition~\ref{def:revChemReact} is formally the gradient flow of the free energy $\cF$ with respect to the metric structure induced by the Onsager operator \eqref{e:def:Onsager} and it holds the formal identity
\begin{equation}\label{e:GF:ChemRateEvo}
 \dot n = - \cK(n) D\cF(n) .
\end{equation}
The property from which immediately follows that~\eqref{e:GF:ChemRateEvo} is the same as~\eqref{e:def:ChemRateEvo} is
\begin{equation*}
 (x^r - y^r) \cdot D\cF(n) = \sum_{i=1}^n x^r_i \log \frac{n_i}{\omega_i} - y^r_i \log \frac{n_i}{\omega_i} = \log \frac{n^{x^r}}{\omega^{x^r}} -  \log \frac{n^{y^r}}{\omega^{y^r}} ,
\end{equation*}
which is nothing else than the nominator of the logarithmic mean $\Lambda\bra[\big]{\frac{n^{x^r}}{\omega^{x^r}}, \frac{n^{y^r}}{\omega^{y^r}}}$ and resembles a discrete chain rule.
The gradient flow decreases its energy along its evolution in terms of the dissipation, i.e.
\begin{equation*}
\begin{split}
 \pderiv{}{t} \cF(n) &= D\cF(n) \cdot \dot n = - D\cF(n) \cdot \cK(n) D\cF(n) \\
 &= - \sum_r k^r \bra*{\frac{n^{x^r}}{\omega^{x^r}} - \frac{n^{y^r}}{\omega^{y^r}}}   \bra*{\log\frac{n^{x^r}}{\omega^{x^r}} - \log\frac{n^{y^r}}{\omega^{y^r}}} =: - \cD(n)
\end{split}
\end{equation*}
We see that the Becker--Döring system fits into this framework.
However, there is freedom in the choice of the free energy and under certain physical assumption, there are other possible choices.

\subsection{Smoluchowski coagulation and fragmentation equation} \label{s:ex:Smoluchowski}

The Becker--Döring clustering equation is itself just a special case in the more general class of Smoluchowski coagulation and fragmentation equations seen as the following family of chemical reactions
\begin{equation*}
  X_i + X_j \stackrel[b_{i,j}]{a_{i,j}}{\rightleftharpoons} X_{i+j} \qquad\text{with}\qquad (i,j)\in \N\times \N.
\end{equation*}
Hence, the stoichiometric coefficients in~\eqref{e:GenCR} are given as $x^{(i,j)}_k = \delta_{i,k} + \delta_{j,k}$ and $y^{(i,j)}_k = \delta_{i+j,k}$.
A gradient flow structure can be established under the assumption of detailed balance, which in this case does not necessarily hold: There exists a state $\omega\in \R^\N$ such that for $(i,j)\in \N \times \N$ holds
\begin{equation*}
  a_{i,j} \omega_i \omega_j = b_{i,j} \omega_{i+j} .
\end{equation*}
Under this condition,  the Smoluchowski coagulation and fragmentation equation is the gradient flow~\eqref{e:GF:ChemRateEvo} of the free energy $\cF$ with repesct to the Onsager operator $\cK$ defined in~\eqref{e:def:Onsager}.

\subsection{Modified Becker--Döring system}

The modified Becker--Döring system was introduced by Dreyer and Duderstadt~\cite{Dreyer2006a}.
The main feature is the introduction of a mixing entropy between the clusters.
Hence, the free energy consists of a relative entropy part as defined in~\eqref{e:def:Lyapunov} plus a mixing entropy depending on the total number of clusters
\begin{equation}\label{e:FEmodBD}
 \tilde\cF(n) = \cH(n\mid \omega) - N(n) \bra*{\log N(n) - 1} \quad\text{with}\quad N(n) = \sum_i n_i .
\end{equation}
The most compact form of the free energy is $\tilde\cF(n) = \sum_i \bra*{n_i \log \frac{n_i}{\omega_i N(n)} + \omega_i}$.
Hence, the differential of the free energy differential is given by
\begin{equation}\label{e:modBD:GradF}
 D\tilde\cF(n) =\bra*{\log \frac{n_1}{\omega_1 N(n)} , \dots, \log \frac{n_i}{\omega_i N(n)}, \dots}.
\end{equation}
The reaction is still of the same form as the classical Becker--Döring system~\eqref{e:BD:ChemReact}, i.e.\ $x_i^r = \delta_{i,1}+\delta_{i,r}$ and $y_i^r = \delta_{i,r+1}$ in~\eqref{e:GenCR}. This leads to the same detailed balance condition as for the classical Becker-Döring model $a_r \omega_1 \omega_r = b_{r+1} \omega_{r+1} =: k^r$. Hence, we obtain the same possible equilibrium states $\omega_r(z)=z^r Q_r$ given in~\eqref{e:BD:equilibrium}.
Again $z$ has to be determined from the formal conservation law $\sum_{l=1}^\infty l \omega_l(z) = \sum_{l=1}^\infty l n_l$. However, the existence as minimizer of the free energy in this case is more involved and for a detailed analysis of the equilibrium states, we refer to~\cite{Herrmann2005}.

Now, from~\eqref{e:modBD:GradF}, we further deduce
\begin{equation*}
 (x^r - y^r) \cdot D\tilde\cF(n) = \log \frac{n_1 n_r}{\omega_1 \omega_r N(n)^2} - \log \frac{n_{r+1}}{\omega_{r+1}N(n)} =\log \frac{n_1 n_r}{\omega_1 \omega_r} - \log \frac{N(n) n_{r+1}}{\omega_{r+1}} .
\end{equation*}
From the above identity, the modified Onsager matrix can be read off and is given by
\begin{equation*}
 \cK(n) :=  \sum_{r} k^r \Lambda\bra*{\frac{n_1 n_r}{\omega_1 \omega_r} ,  \frac{N(n) n_{r+1}}{\omega_{r+1}}} (x^r - y^r)\otimes (x^r - y^r) .
\end{equation*}
Then, we obtain the modified Becker--Döring equation as the gradient flow of the modified free energy~\eqref{e:FEmodBD}
\begin{equation*}
\begin{split}
 \dot n &= - \tilde \cK(n)D\tilde \cF(n) = -\sum_r k^r \bra*{\frac{n_1 n_r}{\omega_1 \omega_r} - \frac{N(n) n_{r+1}}{\omega_{r+1}}} (x^r - y^r) \\
 &= -\sum_r \bra*{ a_r n_1 n_r - b_{r+1} N(n) n_{r+1}} (x^r - y^r) = -\sum_{r} \tilde J_r (x^r - y^r) .
\end{split}
\end{equation*}
The explicit form of the equation is given for any $l=1,2,\dots$ by
\begin{equation*}
 \dot n_l = \tilde J_{l-1} - \tilde J_l \quad\text{with}\ \ \tilde J_0 := -\sum_{r=1}^\infty \tilde J_r \ \ \text{and}\ \  \tilde J_r(n) := a_r n_1 n_r - b_{r+1}N(n) n_{r+1}.
\end{equation*}

\section{Proof of Lemma~\ref{lem:BD:assymptotic_Q}}\label{s:assymptotic_Q}

\begin{proof}[Proof of Lemma~\ref{lem:BD:assymptotic_Q}]
  We calculate using the definition~\eqref{e:BD:equilibrium} of $Q_l$
  \begin{equation*}
    \log\bra*{l^\alpha z_s^{l-1} Q_l} = - \sum_{j=2}^l \log\bra*{1+ \frac{q}{z_s j^\gamma}}
  \end{equation*}
  The function $x \mapsto \log\bra*{1+ \frac{q}{z_s k^\gamma}}$ is positive, continuous and monotone decreasing to $0$.
  Therefore, we can define the Euler number
  \begin{equation*}
    C_1 := \lim_{l\to \infty} \bra*{\sum_{j=2}^l \log\bra*{1+ \frac{q}{z_s j^\gamma}} - \int_2^l \log\bra*{1+ \frac{q}{z_s x^\gamma}}} .
  \end{equation*}
  Moreover, we get from the Euler-MacLaurin formula the estimate
  \begin{equation*}
    \abs*{C_1 - \bra*{\sum_{j=2}^{l} \log\bra*{1+ \frac{q}{z_s j^\gamma}} - \int_2^{l} \log\bra*{1+ \frac{q}{z_s x^\gamma}}}} \leq \log\bra*{1+\frac{q}{z_s l^\gamma}} \leq \frac{q}{z_s l^\gamma} .
  \end{equation*}
  The following bound
  \begin{equation*}
    \frac{q}{z_s x^\gamma} - \frac{1}{2}\bra*{\frac{q}{z_s x^\gamma}}^2 \leq \log\bra*{1+\frac{q}{z_s x^\gamma}}\leq \frac{q}{z_s x^\gamma} - \frac{1}{2}\bra*{\frac{q}{z_s x^\gamma}}^2 + \frac{1}{3}\bra*{\frac{q}{z_s x^\gamma}}^3
  \end{equation*}
  implies the estimate
  \begin{align*}
   0\leq \frac{\int_{2}^{l} \log\bra*{1+\frac{q}{z_s x^\gamma}} \dx{x} }{\int_{2}^{l}  \bra*{ \frac{q}{z_s x^\gamma} - \frac{1}{2}\bra*{\frac{q}{z_s x^\gamma}}^2  } \dx{x}} - 1
   \leq O(l^{-\gamma}) ,
  \end{align*}
  hereby, we use the convention that $\frac{l^\kappa-1}{\kappa} = \log l$ for $\kappa=0$.
  Now, we can combine all the estimates to obtain
  \begin{align*}
    \log\bra*{l^\alpha z_s^{l-1} Q_l} &= - \bra*{\sum_{j=2}^l \log\bra*{1+\frac{q}{z_s j^\gamma}} - \int_2^l \log\bra*{1+\frac{q}{z_s x^\gamma}} \dx{x}} \\
    &\hspace{-2cm} - \bra*{1+  \frac{\int_{2}^{l} \log\bra*{1+\frac{q}{z_s x^\gamma}} \dx{x} }{\int_{2}^{l}  \bra*{ \frac{q}{z_s x^\gamma} - \frac{1}{2}\bra*{\frac{q}{z_s x^\gamma}}^2  } \dx{x}} -1} \int_{2}^{l}  \bra*{ \frac{q}{z_s x^\gamma} - \frac{1}{2}\bra*{\frac{q}{z_s x^\gamma}}^2  } \dx{x} \\
    &= \bra*{\cF_0 - \frac{q}{z_s (1-\gamma)} l^{1-\gamma} + \frac{q^2}{2 z_s^2(1-2\gamma)} l^{1-2\gamma}} \bra*{1+O(l^{-\gamma})} ,
  \end{align*}
  which concludes the proof by setting $\cF_0=\frac{q 2^{1-\gamma}}{z_s (1-\gamma)} - \frac{q^2 2^{1-2\gamma}}{2 z_s^2(1-2\gamma)} - C_1$.
\end{proof}

\addtocontents{toc}{\SkipTocEntry}
\subsection*{Acknowledgement}

The author wishes to thank Matthias Erbar, Stefan Luckhaus, Babara Niethammer and Juan Velázquez for many fruitful discussions on the Becker-Döring system, LSW equation, gradient flows and related topics.
The author thanks the referees whose incisive and detailed comments have substantially improved the final version of the manuscript.
The author gratefully acknowledges support by the German Research Foundation through the
Collaborative Research Center 1060 \emph{The Mathematics of Emergent Effects}.
Part of this work was done while the author was
enjoying the hospitality of the Hausdorff Research Institute for
Mathematics during the Junior Trimester Program on Optimal Transport.

\printbibliography

\end{document}